\newtheorem{theo}{Theorem}[section]
\newtheorem{prop}[theo]{Proposition}
\newtheorem{lemma}[theo]{Lemma}
\newtheorem{cor}[theo]{Corollary}
\newtheorem{conj}[theo]{Conjecture}
\newtheorem{problem}[theo]{Problem}
\newenvironment{proof}{\noindent {\sc Proof}.}
                {\phantom{a} \hfill \framebox[2.2mm]{ } \bigskip}
\newenvironment{customthm}[1]
  {\innercustomthm}
  {\endinnercustomthm}
\newcommand{\ZZ}{\mathbb{Z}}
\def\int{{\rm int}}
\newcommand{\D}{{\mathcal{D}}}
\newcommand{\C}{{\mathcal{C}}}
\renewcommand{\P}{{\mathcal{P}}}
\newcommand{\F}{{\mathcal{F}}}
\renewcommand{\star}{\wr}
\title{On the directed Oberwolfach problem \\ for complete symmetric equipartite digraphs \\
and uniform-length cycles}
\author{Nevena Franceti\'{c} and Mateja \v{S}ajna\footnote{Email: msajna@uottawa.ca. Phone: +1-613-562-5800 ext. 3522. Mailing address: Department of Mathematics and Statistics, University of Ottawa, 150 Louis-Pasteur Private, Ottawa, ON, K1N 6N5,Canada.} \\ University of Ottawa}
\begin{document}
\maketitle \baselineskip 17pt

\begin{abstract}
We examine the necessary and sufficient conditions for a complete symmetric equipartite digraph $K_{n[m]}^\ast$ with $n$ parts of size $m$ to admit a resolvable decomposition into directed cycles of length $t$. We show that the obvious necessary conditions are sufficient for $m,n,t \ge 2$ in each of the following four cases: (i) $m(n-1)$ is even; (ii) $\gcd(m,n) \not\in \{1,3\}$; (iii) $\gcd(m,n)=1$ and $4|n$ or $6|n$; and (iv) $\gcd(m,n)=3$, and if $n=6$, then $p|m$ for a prime $p \le 37$.

\medskip
\noindent {\em Keywords:} Complete symmetric equipartite digraph, resolvable directed cycle decomposition, directed Oberwolfach problem.
\end{abstract}

\section{Introduction}

The celebrated Oberwolfach problem (OP), posed by Ringel in 1967, asks whether $n$ participants at a conference can be seated at $k$ round tables of sizes $t_1, t_2, \ldots, t_k$ for several nights in row so that each participant sits next to everybody else exactly once. The assumption is that $n$ is odd and $n=t_1+t_2+\ldots+t_k$. In graph-theoretic terms, OP$(t_1, t_2, \ldots, t_k)$ asks whether $K_n$ admits a decomposition into 2-factors, each a disjoint union of cycles of lengths $t_1, t_2, \ldots, t_k$. When $n$ is even, the complete graph minus a 1-factor, $K_n-I$, is considered instead \cite{HuaKot}. OP has been solved completely in the case that $t_1=t_2=\ldots=t_k$ \cite{AlsHag,AlsSch,HofSch}, and in many other special cases (for example, for $k=2$ \cite{Tra}; for $t_1, t_2, \ldots, t_k$ all even \cite{BryDan}; for $n \le 60$ \cite{AdaBry,Dez,FraHol,FraRos,SalDra}; and for $n$ sufficiently large \cite{GloJoo}), but is in general still open.

The Oberwolfach problem for the complete equipartite graph $K_{n[m]}$ with $n$ parts of size $m$ and uniform cycle lengths was completely solved by Liu, as stated below.

\begin{theo}{\rm \cite{Liu}}\label{thm:Liu}
Let $t \ge 3$ and $n \ge 2$. Then $K_{n[m]}$ admits a resolvable decomposition into cycles of length $t$ if and only if $t|mn$, $m(n-1)$ is even, $t$ is even when $n = 2$, and $(m, n, t) \not\in\{(2,3,3),(6,3,3),(2,6,3),(6,2,6)\}$.
\end{theo}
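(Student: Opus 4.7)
The plan is to verify the necessary conditions by elementary counting and parity arguments, and then establish sufficiency by a combination of direct constructions and recursive methods, with separate handling of the four exceptional triples.

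\emph{Necessity.} A resolvable $C_t$-decomposition of $K_{n[m]}$ partitions the edge set into 2-factors, each consisting of $mn/t$ disjoint $t$-cycles, so $t\mid mn$. Since each 2-factor is 2-regular while the graph is $m(n-1)$-regular, $m(n-1)$ must be even. For $n=2$, $K_{2[m]}=K_{m,m}$ is bipartite, forcing $t$ even. Ruling out each of the four exceptional triples is done by finite case analysis; for example, for $(m,n,t)=(2,3,3)$ one inspects the eight triangles of the octahedron $K_{2,2,2}$ and checks that no pair of vertex-disjoint triangle factors covers each edge exactly once.

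\emph{Sufficiency.} I would split by the size of $t$ relative to $mn$. When $t=mn$, the required decomposition is a Hamilton decomposition of $K_{n[m]}$, known to exist whenever $m(n-1)$ is even (Laskar--Auerbach, Hilton, and others). For $t<mn$ I would build each 2-factor as an orbit of a starter 2-factor under the action of a suitable group $G$ acting regularly on the vertices, typically $G=\ZZ_n\times\ZZ_m$ with the first coordinate indexing parts, or $G=\ZZ_{mn}$ when $\gcd(m,n)=1$. The starter is a single 2-factor of $mn/t$ disjoint $t$-cycles whose edge-difference multiset covers each non-identity element of $G$ with the correct multiplicity; translating by $G$ then produces the full resolution. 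When no clean single starter exists, I would use recursive inflation: take a resolvable $C_t$-decomposition of a smaller equipartite graph (or of $K_n$ or $K_n-I$ via the uniform OP on complete graphs) and expand each cycle using a small $C_t$-factorization of a bipartite or multipartite ingredient.

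The main technical obstacles are: (i) constructing base factorizations in the many small cases, especially with $n$ or $t$ in $\{3,4,6\}$ or $\gcd(t,mn)$ large, which typically require ad hoc designs not subsumed by a uniform scheme; (ii) exhibiting starter 2-factors with the correct difference spectrum whenever the stated arithmetic conditions hold, since certain gcd configurations obstruct a single cyclic orbit from covering $G\setminus\{0\}$ uniformly and thus force multi-starter or non-abelian constructions; and (iii) proving non-existence for the four exceptional triples, which requires an invariant more refined than degree and divisibility. I expect (iii) to be the most delicate argument, while (i) and (ii) together form a large but essentially mechanical case analysis whose bookkeeping is the principal labour of the proof.
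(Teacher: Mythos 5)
This statement is not proved in the paper at all: Theorem~\ref{thm:Liu} is quoted from Liu~\cite{Liu}, where its proof occupies an entire article, so there is no internal argument to compare yours against. Judged on its own terms, your outline has a genuine gap: the sufficiency half is a plan, not a proof. The entire content of the theorem is the existence, for \emph{every} admissible triple $(m,n,t)$, of either a starter $2$-factor with the correct difference spectrum over a suitable group, or a recursion bottoming out in explicitly constructed ingredient factorizations. You correctly identify this as the ``principal labour'' and then defer it wholesale; nothing in the proposal certifies that the obstructions you yourself flag in (ii) (gcd configurations where no single cyclic orbit covers $G\setminus\{0\}$ uniformly) can actually be overcome under exactly the stated arithmetic conditions. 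Asserting that a case analysis is ``essentially mechanical'' is not the same as carrying it out, and for this theorem the case analysis \emph{is} the theorem.

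The treatment of the four exceptional triples also substantially underestimates their difficulty. Only $(2,3,3)$ is a genuine finite inspection (the four triangle-factors of the octahedron are pairwise non-edge-disjoint). A triangle in $K_{3[m]}$ is necessarily transversal, so a resolvable $C_3$-decomposition of $K_{3[6]}$ is equivalent to a Latin square of order $6$ decomposable into disjoint transversals, i.e.\ to a pair of orthogonal Latin squares of order $6$; thus the case $(6,3,3)$ is Tarry's resolution of Euler's thirty-six officers problem, not a routine check. Likewise $(2,6,3)$ is the non-existence of a nearly Kirkman triple system of order $12$, and $(6,2,6)$ is Piotrowski's non-existence of a $C_6$-factorization of $K_{6,6}$. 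None of these follows from ``an invariant more refined than degree and divisibility'' that you could expect to produce in passing; each is a separately established result that a complete proof must either reprove or cite.
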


The directed Oberwolfach problem was introduced in \cite{BurSaj}. It asks whether $n$ participants can be seated at $k$ round tables of sizes $t_1, t_2, \ldots, t_k$ (where $n=t_1+t_2+\ldots+t_k$)
for several nights in row so that each person sits {\em to the right} of everybody else exactly once. Such a seating is equivalent to a decomposition of $K_n^*$, the complete symmetric digraph of order $n$, into subdigraphs isomorphic to a disjoint union of directed cycles of lengths $t_1, t_2, \ldots, t_k$. The solution to this problem for uniform cycle lengths has been completed very recently (see below), while very little is known about the non-uniform case.

\begin{theo}{\rm \cite{BerGerSot,BenZha,AdaBry1,BurSaj,BurFraSaj,Lac,Til}}\label{thm:Kn*}
Let $t \ge 2$ and $n \ge 2$. Then $K_n^*$ admits a resolvable decomposition into directed cycles of length $t$ if and only if $t|n$ and $(n, t) \not\in\{(6,3),(4,4),(6,6)\}$.
\end{theo}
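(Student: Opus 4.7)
The plan is to establish necessity via a counting argument and the three exceptions via (cited) small-case analysis, and to split sufficiency into cases according to the value of $t$, relying on the cited chain of constructions for the hardest ranges.

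For necessity, any resolution class is a spanning $2$-regular subdigraph whose components are directed $t$-cycles, so it contains $n/t$ cycles and hence $t\mid n$. The three exceptions are then eliminated one by one: Tillson~\cite{Til} showed that neither $K_4^*$ nor $K_6^*$ admits a Hamilton decomposition, ruling out $(n,t)\in\{(4,4),(6,6)\}$; and no resolvable Mendelsohn triple system of order $6$ exists, ruling out $(n,t)=(6,3)$.

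For sufficiency I would argue by $t$. When $t=2$, a resolvable $C_2$-decomposition of $K_n^*$ is equivalent to a $1$-factorization of $K_n$, which exists for every even $n\ge 2$. When $t=3$, this is the existence of resolvable Mendelsohn triple systems, settled in~\cite{BerGerSot,BenZha}. When $t=n \notin \{4,6\}$, Tillson's theorem~\cite{Til} yields a Hamilton decomposition of $K_n^*$, and every Hamilton cycle is itself a $C_n$-factor. For the intermediate range $4\le t<n$ with $t\mid n$, the natural first attempt is to write $n=tm$ with $m\ge 2$, partition $V(K_n^*)$ into groups $V_1,\dots,V_m$ of size $t$, and use the arc-disjoint splitting
\[
K_n^* \;=\; \bigsqcup_{i=1}^{m} K_{V_i}^* \;\sqcup\; K_{m[t]}^*,
\]
handling each intra-group $K_{V_i}^*$ by Tillson's Hamilton decomposition (when $t\notin\{4,6\}$), which contributes $t-1$ resolution classes, and the inter-group digraph $K_{m[t]}^*$ by a resolvable $C_t$-factorization contributing the remaining $(m-1)t$ classes.

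The main obstacle is twofold. First, when $t\in\{4,6\}$ the intra-group split collapses because $K_t^*$ has no Hamilton decomposition, so arcs from both the intra- and inter-group sides must be reshuffled into each $C_t$-factor; this is where the specialized constructions of~\cite{AdaBry1,BurSaj,BurFraSaj,Lac} are required, along with direct verification at the smallest orders. Second, even when $t\notin\{4,6\}$, producing a resolvable $C_t$-factorization of $K_{m[t]}^*$ is itself a substantial problem — precisely the one this paper is devoted to — so the full strength of Theorem~\ref{thm:Kn*} is obtained only after combining all the constructions from the cited references.
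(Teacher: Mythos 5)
This statement is not proved in the paper at all: it is Theorem~\ref{thm:Kn*}, an imported result quoted with citations to \cite{BerGerSot,BenZha,AdaBry1,BurSaj,BurFraSaj,Lac,Til}, so there is no in-paper argument to compare against. Judged on its own terms, your sketch is sound on necessity (the counting argument for $t\mid n$ and the three exceptions are correct) and on the cases $t=2$, $t=3$, and $t=n$, modulo the small imprecision that Tillson's theorem covers only even orders $2m\ge 8$, with odd $n$ handled by orienting a Walecki decomposition of $K_n$.

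The genuine gap is in your treatment of the intermediate range $4\le t<n$. Your splitting $K_n^*=\bigsqcup_i K_{V_i}^*\sqcup K_{m[t]}^*$ is a valid arc partition and the factor count matches, but the inter-group ingredient you need --- a $\vec{C}_t$-factorization of $K_{m[t]}^*$ with $m$ parts of size $t$ --- is precisely an instance of Problem~\ref{prob}, the open problem this paper is about, not something contained in the references cited for Theorem~\ref{thm:Kn*}. Corollary~\ref{cor:Liu} supplies it only when $t(m-1)$ is even; when $t$ is odd and $m$ is even you land squarely in the hard cases of this paper, including the outstanding case $(m,n,t)=(t,2p,t)$ listed in Section~10, which is still unresolved. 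Worse, the paper derives its $K_{n[m]}^*$ results \emph{from} Theorem~\ref{thm:Kn*} (see Proposition~\ref{pro:easy}), so routing a proof of Theorem~\ref{thm:Kn*} back through equipartite factorizations is circular in this context. The actual proofs in \cite{BurSaj,BurFraSaj,Lac} do not use this reduction; they build $\vec{C}_t$-factors of $K_n^*$ directly (via starter factors and difference methods much like those in Sections~6--9 here). So your outline correctly identifies where the difficulty lives, but the specific reduction you propose cannot be completed from the cited literature and should be replaced by a direct appeal to those constructions.
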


In this paper, we introduce the directed Oberwolfach problem for complete symmetric equipartite digraphs. As a scheduling problem, it asks whether the $nm$ participants at a conference, consisting of $n$ delegations of $m$ participants each, can be seated at round tables of sizes $t_1, t_2, \ldots, t_k$ (where $nm=t_1+t_2+\ldots+t_k$) so that over the course of $m(n-1)$ meals, every participant sits {\em to the right of every participant from another delegation} exactly once. Thus, we are asking about the existence of a decomposition of $K_{n[m]}^*$, the complete symmetric equipartite digraph with $n$ parts of size $m$, into subdigraphs, each a disjoint union of directed cycles of lengths $t_1, t_2, \ldots, t_k$. Limiting our investigation to the uniform cycle length, we propose the following problem.

\begin{problem}\label{prob}{\rm
Determine the necessary and sufficient conditions on $m$, $n$, and $t$ for $K_{n[m]}^*$ to admit a resolvable decomposition into directed $t$-cycles.}
\end{problem}

Apart from case $m=1$ (Theorem~\ref{thm:Kn*}) and decompositions that follow directly from Theorem~\ref{thm:Liu} (see Corollary~\ref{cor:Liu} below), to our knowledge, the only previous contribution to Problem~\ref{prob} is a partial solution for $t=3$, as stated below.

\begin{theo}{\rm \cite{BenWeiZhu}}\label{thm:Ben}
The digraph $K_{n[m]}^*$ admits a resolvable decomposition into directed 3-cycles  if and only if $3|mn$ and $(m,n) \ne (1,6)$, with possible exceptions of the form $(m,6)$, where $m$ is not divisible by any prime less than 17.
\end{theo}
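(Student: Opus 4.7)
The plan is to reduce Problem~\ref{prob} in the case $t=3$ to a handful of subcases. Necessity is immediate: each parallel class contains $mn/3$ directed triangles, forcing $3 \mid mn$; and $(m,n)=(1,6)$ is excluded by Theorem~\ref{thm:Kn*}. For sufficiency I would first dispose of all pairs $(m,n)$ with $m(n-1)$ even by invoking Theorem~\ref{thm:Liu} to obtain a resolvable decomposition of the undirected $K_{n[m]}$ into $3$-cycles, and then orienting each triangle in both directions: each undirected $2$-factor thereby yields two parallel classes of directed $3$-cycles in $K_{n[m]}^\ast$, and the total count $m(n-1)/2 \cdot 2 = m(n-1)$ is exactly right. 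Liu's three exceptions $(m,n)\in\{(2,3),(6,3),(2,6)\}$ would be settled separately by small direct constructions (or noted to lie outside the hypothesis $3 \mid mn$ and $(m,n)\ne(1,6)$).

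The substantive regime is therefore $m$ odd and $n$ even, where $3 \mid mn$ forces $3 \mid n$ or $3 \mid m$. For $6 \mid n$ with $n \ne 6$, my approach would be a blow-up: starting from a resolvable decomposition of $K_n^\ast$ into directed $3$-cycles, provided by Theorem~\ref{thm:Kn*}, replace each vertex by a part of size $m$; each triangle $(x,y,z)$ in a parallel class of $K_n^\ast$ then blows up into a ``cyclic tripartite'' digraph with arc sets $X \to Y$, $Y \to Z$, $Z \to X$, which can be decomposed into $m$ parallel classes of $m$ triangles via a pair of orthogonal Latin squares of order $m$ (equivalently, via a resolvable transversal design), and these ingredient parallel classes are combined uniformly across the $n/3$ triangles of each base parallel class. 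An analogous frame argument with groups and base interchanged, using a resolvable directed decomposition of $K_{n[3]}^\ast$ as the ingredient, would cover the residual case $3 \mid m$ and $3 \nmid n$.

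The main obstacle is $n=6$, where Theorem~\ref{thm:Kn*} supplies no resolvable directed $3$-cycle decomposition of $K_6^\ast$ to serve as a base. For odd $m$ with $\gcd(m,6)=1$ one is therefore forced into direct constructions on $K_{6[m]}^\ast$. The strategy suggested by the exception clause is, for each small prime $p\in\{5,7,11,13\}$, to build an explicit resolvable decomposition of $K_{6[p]}^\ast$ by hand---for instance via starters in $\ZZ_{6p}$ or difference families over $\ZZ_6 \times \ZZ_p$, very likely with computer assistance---and then to lift any such ingredient to arbitrary $m$ divisible by $p$ through a transversal-design product construction. Once these four small direct constructions are in place, the combined machinery covers every $(m,6)$ with $m$ having a prime factor below $17$, leaving precisely the residual family ($m$ coprime to all primes $<17$, including $m=1$, which is the genuine exception) unresolved. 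I expect the bulk of the technical effort, and the principal difficulty, to lie in finding and verifying these direct decompositions of $K_{6[p]}^\ast$ for $p\in\{5,7,11,13\}$, and in checking that each ingredient has the combinatorial properties needed for the product lift.
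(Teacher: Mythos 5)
This statement is not proved in the paper at all: Theorem~\ref{thm:Ben} is imported verbatim from Bennett, Wei and Zhu \cite{BenWeiZhu}, so there is no internal proof to compare your sketch against. Judged on its own terms, your outline gets the easy parts right --- necessity, the $m(n-1)$ even case via Theorem~\ref{thm:Liu} with both orientations of each triangle, and the blow-up of a $\vec{C}_3$-factorization of $K_n^*$ for $6\mid n$, $n\ne 6$, which is exactly the mechanism the paper develops in Lemma~\ref{lem:blow1}(1) and Corollary~\ref{cor:blow-digraph}(a) --- but it has two genuine gaps.

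First, the case $n$ even, $3\nmid n$, $3\mid m$ (e.g.\ $K_{4[3]}^*$, $K_{8[3]}^*$, $K_{10[3]}^*$, \dots) is not a ``residual case'' dispatchable by an ``analogous frame argument with $K_{n[3]}^*$ as ingredient'': that ingredient is itself an instance of the theorem in the hard regime $m(n-1)$ odd, so the argument as described is circular. (The present paper does the same in reverse: Lemma~\ref{lem:4} \emph{invokes} Theorem~\ref{thm:Ben} to get $K_{4[3]}^*$, precisely because no elementary blow-up produces it.) This infinite family is where the actual content of \cite{BenWeiZhu} lies --- holey Mendelsohn frames and recursive frame/filling constructions --- and your proposal contains no workable substitute. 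Second, your list of direct constructions at $n=6$ is incomplete: besides $K_{6[p]}^*$ for $p\in\{5,7,11,13\}$ you also need $K_{6[2]}^*$ (which is one of Liu's four exceptions $(2,6,3)$, so the orient-both-ways trick is unavailable) and $K_{6[3]}^*$, since the exception clause permits $m$ divisible by $2$ or $3$; more generally the three Liu exceptions $(2,3),(6,3),(2,6)$ all satisfy $3\mid mn$ and cannot be ``noted to lie outside the hypothesis.'' The transversal-design lift from $K_{6[p]}^*$ to $K_{6[m]}^*$ for $p\mid m$ is fine for odd cofactors (Corollary~\ref{cor:blow-digraph}(a)), and even cofactors fall into the easy parity case, so that part of the plan is sound.
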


The main result of this paper is as follows.

\begin{theo}\label{thm:main}
Let $m$, $n$, and $t$ be integers greater than 1, and let $g=\gcd(n,t)$. Assume one of the following conditions holds.
\begin{enumerate}[(i)]
\item $m(n-1)$ even; or
\item $g \not\in \{ 1,3 \}$; or
\item $g=1$, and $n \equiv 0 \pmod{4}$ or $n \equiv 0 \pmod{6}$; or
\item $g=3$, and if $n=6$, then $m$ is divisible by a prime $p \le 37$.
\end{enumerate}
Then the digraph $K_{n[m]}^*$ admits a resolvable decomposition into directed $t$-cycles if and only if $t|mn$ and $t$ is even in case $n=2$.
\end{theo}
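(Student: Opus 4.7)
Necessity is immediate: counting vertices, any 2-factor into directed $t$-cycles spans $mn$ vertices, forcing $t|mn$; and for $n=2$ the digraph $K_{2[m]}^*$ is bipartite between the two parts, so every directed cycle in it has even length, forcing $t$ even.

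For case (i), when $m(n-1)$ is even, I would appeal directly to Theorem~\ref{thm:Liu}: it produces a resolvable decomposition of the underlying undirected graph $K_{n[m]}$ into $t$-cycles, barring the four sporadic triples $(m,n,t) \in \{(2,3,3),(6,3,3),(2,6,3),(6,2,6)\}$. Since $K_{n[m]}^*$ is obtained from $K_{n[m]}$ by replacing every edge with two oppositely directed arcs, each undirected 2-factor yields two directed 2-factors of $K_{n[m]}^*$ (one for each coherent orientation of its cycles), giving the required $m(n-1)$ directed 2-factors. Each of the four Liu exceptions satisfies $g=\gcd(n,t)\in\{2,3\}$ and thus also falls under case (ii) or (iv); they will be disposed of there by explicit small constructions.

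For cases (ii)--(iv), I propose a uniform blow-up strategy. Write $t=g t_1$ and $n=g n_1$ with $\gcd(t_1,n_1)=1$; the divisibility $t|mn$ then forces $t_1|m$. I would choose a divisor $\ell$ of both $n$ and $t$ for which Theorem~\ref{thm:Kn*} supplies a resolvable decomposition of $K_n^*$ into directed $\ell$-cycles---taking $\ell=g$ in case (ii), $\ell \in \{4,6\}$ in case (iii), and $\ell=3$ in case (iv). The resulting $n-1$ 2-factors of $K_n^*$, each comprising $n/\ell$ directed $\ell$-cycles, are then blown up by replacing every vertex of $K_n^*$ with an independent set of $m$ vertices and every arc with the $m^2$ corresponding arcs. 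This expresses $K_{n[m]}^*$ as an arc-disjoint union of $(n-1)(n/\ell)$ copies of the blow-up $C_\ell^*[\overline{K}_m]$, organized into $n-1$ ``levels'' of $n/\ell$ blow-ups each.

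The heart of the proof is then to decompose $C_\ell^*[\overline{K}_m]$ resolvably into exactly $m$ 2-factors, each a disjoint union of directed $t$-cycles. Granting this, combining the $i$-th internal 2-factor across the $n/\ell$ blow-ups in a single level produces a genuine 2-factor of $K_{n[m]}^*$ into directed $t$-cycles, and ranging over all $m$ internal indices and $n-1$ levels yields the required $m(n-1)$ 2-factors. The principal obstacles are precisely this internal decomposition of $C_\ell^*[\overline{K}_m]$---which demands explicit constructions via difference methods or starter--adder techniques---together with the sporadic exceptions $(n,\ell) \in \{(4,4),(6,6)\}$ of Theorem~\ref{thm:Kn*}, which require direct constructions. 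The deepest difficulty lies in case (iv) with $n=6$, where $K_6^*$ has no resolvable decomposition into directed $3$-cycles at all; here the hypothesis that $m$ has a prime factor $p\le 37$ must be leveraged to bootstrap a recursive construction from an explicit base case on $K_{6[p]}^*$, which I anticipate to be the most intricate part of the argument.
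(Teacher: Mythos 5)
Your treatment of necessity and of case (i) for $t\ge 3$ matches the paper's, except that Theorem~\ref{thm:Liu} requires $t\ge 3$, so the sub-case $t=2$ needs a separate (easy) argument: the paper extracts a 1-factorization of $K_{n[m]}$ from its Hamiltonian decomposition and replaces each 1-factor by a factor of digons. Your plan for cases (ii) and (iv) --- blow up a $\vec{C}_\ell$-factorization of $K_n^*$ with $\ell=g$ resp.\ $\ell=3$, decompose each blown-up cycle $\vec{C}_\ell\wr\bar K_m$ internally into $m$ directed $t$-cycle factors, and supply direct constructions for the exceptions $(n,\ell)\in\{(4,4),(6,6),(6,3)\}$ of Theorem~\ref{thm:Kn*} --- is essentially the paper's route (Proposition~\ref{pro:easy}, Lemma~\ref{lem:blow1}, Corollary~\ref{cor:blow-digraph}, Sections 7--9), though the ``explicit constructions'' you defer constitute most of the actual work, and the four Liu exceptions are settled in the paper by citation to Theorem~\ref{thm:Ben} and by a $K_{3,3}^*$ decomposition rather than re-derived.

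The genuine gap is case (iii). There $g=\gcd(n,t)=1$, so your chosen $\ell\in\{4,6\}$ divides $n$ but not $t$; it is not ``a divisor of both $n$ and $t$'' as your strategy requires, and this is not a repairable oversight. Since $\bar K_m$ has no arcs, every arc of $\vec{C}_\ell\wr\bar K_m$ advances the first coordinate one step along $\vec{C}_\ell$, so every directed cycle in $\vec{C}_\ell\wr\bar K_m$ has length a multiple of $\ell$; with $\gcd(t,\ell)=1$ and $\ell>1$ this digraph contains no directed $t$-cycle at all, and no refinement of a blown-up cycle factorization of $K_n^*$ can yield a $\vec{C}_t$-factorization of $K_{n[m]}^*$. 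The required $t$-cycles must visit the parts in a pattern that does not respect any cycle decomposition of $K_n^*$. The paper instead constructs $\vec{C}_t$-factorizations of $K_{4[t]}^*$, $K_{8[t]}^*$ and $K_{6[t]}^*$ directly by rotational starters built from difference paths (Lemmas~\ref{lem:4}, \ref{lem:8}, \ref{lem:6}), extends to all $n\equiv 0\pmod 4$ or $n\equiv 0\pmod 6$ via the filling construction of Lemma~\ref{lem:filling}, which splits $K_{n_1n_2[t]}^*\cong K_{n_1}^*\wr K_{n_2[t]}^*$ into one copy of $K_{n_1[n_2t]}^*$ plus $n_1$ disjoint copies of $K_{n_2[t]}^*$, and only then blows up $\bar K_t\mapsto\bar K_m$ by Corollary~\ref{cor:blow-digraph}(a). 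Your case (iii) needs to be replaced by an argument of this kind.
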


As we shall see, to complete Problem~\ref{prob}, it suffices to show that the obvious necessary conditions on $(m,n,t)$ are sufficient in the following two cases: (i) $(m,n,t)=(t,2p,t)$ for a prime $p \ge 5$  and odd prime $t$; and (ii) $(m,n,t)=(m,6,3)$ for a prime $m \ge 41$.

This paper is organized as follows. In Section 2 we introduce the necessary terminology, and in Section 3 we solve the easiest case of Problem~\ref{prob}, that is, the case with $m(n-1)$ even. In Section 4 we present some smaller decompositions that help us address the rest of the problem. In Section 5, we solve the easy cases for $m(n-1)$ odd, and address the difficult cases in Sections 6--9. The  proof of Theorem~\ref{thm:main}, as well as the outstanding cases of Problem~\ref{prob}, are summarized in Section 10.

\section{Prerequisites}

As usual, the vertex set and arc set of a directed graph (shortly {\em digraph}) $D$ will be denoted $V(D)$ and $A(D)$, respectively. All digraphs in this paper are strict, that is, have no loops and no parallel arcs.

By $K_n$, $\bar{K}_n$, $K_{m,n}$, $K_{n[m]}$, and $C_t$ we denote the complete graph of order $n$, the empty graph of order $n$, the complete bipartite graph with parts of size $m$ and $n$, the complete equipartite graph with $n$ parts of size $m$, and the  cycle of length $t$ ($t$-cycle), respectively.
Analogously, by $K_n^*$, $K_{m,n}^*$, $K_{n[m]}^*$, and $\vec{C}_t$ we denote the complete symmetric digraph of order $n$, the complete symmetric bipartite digraph with parts of size $m$ and $n$,
the complete symmetric equipartite digraph with $n$ parts of size $m$, and the directed cycle of length $t$ (directed $t$-cycle), respectively. A {\em $\vec{C}_t$-factor} of a digraph $D$ is a spanning subdigraph of $D$ that is a disjoint union of directed $t$-cycles.

A {\em decomposition} of a digraph $D$ is a set $\{ D_1, \ldots, D_k \}$ of digraphs of $D$ such that $\{ A(D_1), \ldots, A(D_k) \}$ is a partition of $A(D)$. A $D'$-decomposition of $D$, where $D'$ is a subdigraph of $D$, is a decomposition into subdigraphs isomorphic to $D'$.
A decomposition $\D=\{ D_1, \ldots, D_k \}$ of $D$ is said to be {\em resolvable} if $\D$ partitions into {\em parallel classes}, that is, sets $\{ D_{i_1}, \ldots, D_{i_{k_i}} \}$ such that $\{ V(D_{i_1}), \ldots, V(D_{i_{k_i}}) \}$ is a partition of $V(D)$.

A {\em $\vec{C}_t$-factorization} of $D$ is a decomposition of $D$ into $\vec{C}_t$-factors, and it corresponds to a resolvable $\vec{C}_t$-decomposition.

A decomposition, $C_t$-factor, and $C_t$-factorization of a graph are defined analogously.

The {\em wreath product} of digraphs $D_1$ and $D_2$, denoted $D_1 \wr D_2$, is the digraph with vertex set $V(D_1) \times V(D_2)$ and arc set $A(D_1 \wr D_2)$ consisting precisely of all arcs of the form $\left((u_1,u_2),(u_1,v_2)\right) $ where $(u_2, v_2) \in A(D_2)$, as well as all arcs of the form $\left( (u_1,u_2),(v_1,v_2) \right)$ where $(u_1,v_1) \in A(D_1)$.

It is not difficult to see that $K_n^\ast \wr K_m^* \cong K_{mn}^*$ and $K_n^\ast \wr \bar{K}_m \cong K_{n[m]}^*$.

\section{$\vec{C}_t$-factorization of $K_{n[m]}^*$: easy observations}

Throughout this paper we shall assume that $m$, $n$, and $t$ are integers greater than 1. The obvious necessary conditions for the existence of a $\vec{C}_{t}$-factorization of $K_n^\ast \star \bar{K}_m$ are as follows:
\begin{enumerate}[(C1)]
\item $t \vert mn$, and
\item $t$ is even when $n=2$.
\end{enumerate}

The following lemma, together with Theorem~\ref{thm:Liu}, will help us establish sufficiency in the case that $m(n-1)$ is even (Corollary~\ref{cor:Liu} below).

\begin{lemma}{\rm \cite{BurSaj,Ush}}\label{lem:bipartite}
Let $t \ge 2$ be an even integer, and $\beta$ any positive integer. Then the digraph $K_{\beta \frac{t}{2},\beta \frac{t}{2}}^\ast$ admits a $\vec{C}_{t}$-factorization.
\end{lemma}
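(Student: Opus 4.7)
The plan is to give a direct, explicit construction based on pairing perfect matchings of prescribed differences. Set $N = \beta t/2$ and label the two parts of $K^*_{N,N}$ as $A = \{a_i : i \in \ZZ_N\}$ and $B = \{b_i : i \in \ZZ_N\}$, with all indices reduced modulo $N$ throughout. For each $s \in \ZZ_N$, I would define a spanning subdigraph $F_s$ consisting of the $A$-to-$B$ arcs $\{(a_i,b_{i+s}) : i \in \ZZ_N\}$ together with the $B$-to-$A$ arcs $\{(b_j,a_{j+\beta-s}) : j \in \ZZ_N\}$, and then show that $\{F_s : s \in \ZZ_N\}$ is the desired $\vec{C}_t$-factorization.

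The verification then has three short steps. First, every vertex of $A \cup B$ has in-degree and out-degree exactly $1$ in $F_s$, so $F_s$ is a disjoint union of directed cycles spanning $A \cup B$. Second, starting from $a_0$ and alternating between the two matchings, one traces $a_0 \to b_s \to a_\beta \to b_{\beta+s} \to a_{2\beta} \to \cdots$, so each round trip advances the $A$-index by $\beta$. The cycle closes at the smallest $k$ with $k\beta \equiv 0 \pmod{N}$, and since $\gcd(\beta,\beta t/2)=\beta$ this smallest $k$ is $N/\beta = t/2$; the cycle length is therefore $2\cdot(t/2)=t$. The same reasoning applies to a cycle through any $a_r$, so $F_s$ is a $\vec{C}_t$-factor (consisting of $\beta$ cycles of length $t$, one for each residue class of $r$ modulo $\beta$). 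Third, the arc $(a_i,b_j)$ lies in $F_{j-i}$ and the arc $(b_j,a_i)$ lies in $F_{\beta+j-i}$, so every one of the $2N^2$ arcs of $K^*_{N,N}$ appears in exactly one $F_s$, and an arc count confirms that the $N$ factors exhaust $A(K^*_{N,N})$.

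I do not expect a serious obstacle: the construction is essentially a pairing of the $N$ ``difference-$s$'' perfect matchings from $A$ to $B$ with the $N$ ``difference-$(\beta-s)$'' perfect matchings from $B$ to $A$, chosen so that the common round-trip displacement $\beta$ has $\gcd(\beta,N)=\beta$, which forces cycles of length exactly $t$. The only substantive computation is the cycle-length derivation, and that is also the precise point where the even-$t$ hypothesis is used, namely to make $N=\beta t/2$ a positive integer.
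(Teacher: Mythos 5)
Your construction is correct: each $F_s$ is a union of directed cycles alternating between the parts, the round-trip displacement $\beta$ satisfies $\gcd(\beta,N)=\beta$ with $N=\beta t/2$, so every cycle has length exactly $2\cdot N/\beta = t$, and the assignment $(a_i,b_j)\mapsto F_{j-i}$, $(b_j,a_i)\mapsto F_{\beta+j-i}$ shows each of the $2N^2$ arcs lies in exactly one of the $N$ factors. Note, however, that the paper does not prove this lemma at all --- it is imported verbatim from the cited references (Burgess--\v{S}ajna and Ushio) --- so there is no internal argument to compare against; what you have written is a clean, self-contained replacement for that citation, obtained by pairing the difference-$s$ perfect matching from $A$ to $B$ with the difference-$(\beta-s)$ matching from $B$ to $A$. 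The only stylistic remark is that you could state explicitly that a directed cycle in a bipartite digraph necessarily has even length, which is the structural reason the hypothesis that $t$ is even cannot be dropped, beyond merely making $\beta t/2$ an integer.
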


\begin{cor}\label{cor:Liu}
Let $m(n-1)$ be even, let $t \ge 2$ be such that $t|mn$, and $t$ is even if $n=2$. Then $K_{n[m]}^\ast$ admits a $\vec{C}_{t}$-factorization.
\end{cor}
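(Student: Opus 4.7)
The plan is to derive the $\vec{C}_t$-factorization of $K_{n[m]}^\ast$ from an undirected $C_t$-factorization of $K_{n[m]}$ via a simple orientation trick, and then mop up the residual cases not covered by Liu's theorem using Theorem~\ref{thm:Ben} and Lemma~\ref{lem:bipartite}.

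For $t \ge 3$, I would observe that each undirected $t$-cycle admits two cyclic orientations, yielding two arc-disjoint directed $t$-cycles whose arcs are precisely the pair of opposite arcs corresponding to each edge of the underlying cycle. Hence any resolvable $C_t$-decomposition of $K_{n[m]}$ lifts, via this doubling, to a resolvable $\vec{C}_t$-decomposition of $K_{n[m]}^\ast$ with twice as many parallel classes. The hypotheses of the corollary, restricted to $t \ge 3$, match those of Theorem~\ref{thm:Liu} exactly, so this settles every triple $(m,n,t)$ with $t \ge 3$ outside the four undirected exceptions $\{(2,3,3),(6,3,3),(2,6,3),(6,2,6)\}$.

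For these exceptions: the triple $(6,2,6)$ has $n=2$, hence $K_{2[6]}^\ast \cong K_{6,6}^\ast$ is handled directly by Lemma~\ref{lem:bipartite} with $t = 6$ and $\beta = 2$ (noting $\beta t/2 = 6 = m$). The three exceptions with $t = 3$ satisfy the hypotheses of Theorem~\ref{thm:Ben}: since $m \ge 2$ none equals $(1,6)$, and the only one with $n = 6$, namely $(2,6,3)$, has $m = 2$ divisible by the prime $2$, so it lies outside the residual possible-exception set $(m,6)$ with $m$ not divisible by any prime $< 17$.

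Finally, consider $t = 2$, which is not covered by Theorem~\ref{thm:Liu}. The conditions $t \mid mn$ and $m(n-1)$ even force $m$ to be even (were $m$ odd, then $2 \mid mn$ would force $n$ even, contradicting $m(n-1)$ even). A $\vec{C}_2$-factorization of $K_{n[m]}^\ast$ is equivalent to a $1$-factorization of the underlying graph $K_{n[m]}$, since each $\vec{C}_2$-factor consists of opposite arc-pairs forming a perfect matching. Using the identity $K_{n[m]} = K_{n[2]} \wr \bar{K}_{m/2}$, I would combine the standard 1-factorization of the cocktail-party graph $K_{n[2]}$ into $2(n-1)$ 1-factors with 1-factorizations of the resulting blown-up copies of $K_{m/2,m/2}$ (each into $m/2$ 1-factors), obtaining $m(n-1)$ 1-factors of $K_{n[m]}$ in total. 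The only delicate step in the whole argument is bookkeeping: confirming that each of the four undirected exceptions of Liu is indeed cleanly absorbed by the other two tools, and that the parity analysis for $t = 2$ goes through without fuss.
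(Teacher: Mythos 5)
Your proposal is correct, and for the main case ($t\ge 3$ outside Liu's four exceptions: orient each cycle both ways) and for the three $t=3$ exceptions (Theorem~\ref{thm:Ben}) it coincides with the paper's proof; your check that $(2,6,3)$ escapes the possible-exception clause of Theorem~\ref{thm:Ben} because $2<17$ divides $m=2$ is exactly the point that needs verifying. You diverge in two places. For $(6,2,6)$ you invoke Lemma~\ref{lem:bipartite} directly with $\beta=2$ to factorize $K_{6,6}^\ast$ into directed $6$-cycles, whereas the paper takes $\beta=1$ to get a $\vec{C}_6$-factorization of $K_{3,3}^\ast$ and then resolves $K_{6,6}^\ast$ into copies of $K_{3,3}^\ast$; your route is the same lemma applied more economically. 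For $t=2$ the paper deduces that $K_{n[m]}$ has a $C_{mn}$-factorization from Theorem~\ref{thm:Liu} and splits each even Hamiltonian cycle into two perfect matchings, while you first argue by parity that $m$ must be even (correct: if $m$ were odd, $2\mid mn$ would force $n$ even and hence $m(n-1)$ odd) and then build a $1$-factorization of $K_{n[m]}\cong K_{n[2]}\wr\bar{K}_{m/2}$ from the standard $1$-factorization of $K_{2n}-I$ together with $1$-factorizations of $K_{m/2,m/2}$. Both constructions are valid; the paper's reuses Theorem~\ref{thm:Liu} and avoids the parity case analysis, while yours is more self-contained and elementary. No gaps.
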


\begin{proof}
First, assume $t=2$. The graph $K_{n[m]}$ admits a $C_{mn}$-factorization by Theorem~\ref{thm:Liu}, and since $mn$ is even, it therefore admits a 1-factorization. Replacing each 1-factor in a 1-factorization of $K_{n[m]}$ with a $\vec{C}_{2}$-factor results in a $\vec{C}_{2}$-factorization of $K_{n[m]}^\ast$.

Hence we may now assume $t \ge 3$. If $(m,n,t) \not\in\{(2,3,3),(6,3,3),(2,6,3),(6,2,6)\}$, then by Theorem~\ref{thm:Liu}, since $m(n-1)$ is even, there exists a $C_t$-factorization of $K_{n[m]}$. To obtain a $\vec{C}_{t}$-factorization of $K_{n[m]}^*$, we direct each cycle in this decomposition in both possible ways.

Theorem~\ref{thm:Ben} guarantees existence of a $\vec{C}_{3}$-factorization of $K_{n[m]}^*$ for $(m,n) \in\{(2,3),(6,3),$ $(2,6)\}$.

Finally, let $(m,n,t)=(6,2,6)$, so $K_{n[m]}^* \cong K_{6,6}^\ast$. By Lemma~\ref{lem:bipartite}, there exists a $\vec{C}_6$-factorization of $K_{3,3}^\ast$. It is easy to see that $K_{6,6}^\ast$ admits a resolvable decomposition into copies of $K_{3,3}^\ast$. Hence $K_{6,6}^\ast$ admits a $\vec{C}_6$-factorization.
\end{proof}

\section{Some useful decompositions}

In this section, we prove existence of some $\vec{C}_t$-factorizations that will help us address Problem~\ref{prob} in the cases not covered by Corollary~\ref{cor:Liu}.

\begin{lemma}\label{lem:blow1}
Let $t \ge 3$ and $p$ be an odd prime. Then the following hold.
\begin{enumerate}[(1)]
\item There exists a $\vec{C}_t$-factorization of $\vec{C}_t \wr \bar{K}_p$.
\item There exists a $\vec{C}_{pt}$-factorization of $\vec{C}_t \wr \bar{K}_p$.
\item If $t$ is odd, then there exists a $\vec{C}_t$-factorization of $\vec{C}_t \wr \bar{K}_4$.
\end{enumerate}
\end{lemma}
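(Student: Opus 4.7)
All three parts admit a unified treatment via ``translations''. I would identify $V(\vec{C}_t \wr \bar{K}_q)$ with $\ZZ_t \times G$, where $q = |G|$ and $G = \ZZ_p$ in parts (1)--(2), $G = \ZZ_2 \times \ZZ_2$ in part (3); the arcs are then precisely all pairs $((i,a),(i+1,a+v))$ with $a,v \in G$. For each $i \in \ZZ_t$ and $v \in G$, call $T_i^{(v)} = \{((i,a),(i+1,a+v)) : a \in G\}$ the \emph{translation of type $v$ at step $i$}; the $q^2$ arcs between columns $i$ and $i+1$ decompose as $q$ disjoint translations. A spanning subdigraph whose arcs at step $i$ consist of $T_i^{(v_i)}$ is a disjoint union of $q/\mathrm{ord}(s)$ directed cycles each of length $t \cdot \mathrm{ord}(s)$, where $s = v_0 + v_1 + \cdots + v_{t-1} \in G$. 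Thus $s = 0$ yields a $\vec{C}_t$-factor, while $\mathrm{ord}(s) = p$ yields a single directed Hamilton cycle (a $\vec{C}_{pt}$-factor).

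The next step is to reformulate the problem as an array-construction question: find a $t \times q$ array $A = [v_{i,k}]$ with rows indexed by $i \in \ZZ_t$ and columns indexed by $k \in G$ (the column index labelling a factor) such that (a) every row of $A$ is a permutation of $G$, so the $q$ factors together cover every arc; and (b) each column sum $s_k = \sum_i v_{i,k}$ has the prescribed order in $G$. For parts (1) and (2) with $G = \ZZ_p$, I would take $v_{i,k} = c_i k + d_i$ for some $c_i \in \ZZ_p^*$ and $d_i \in \ZZ_p$; each row is then a permutation of $\ZZ_p$, and $s_k = k\sum_i c_i + \sum_i d_i$. Since $t \ge 3$ and $p$ is an odd prime, one can choose $c_i \in \ZZ_p^*$ with $\sum_i c_i \equiv 0 \pmod{p}$ (for instance $c_0 = \cdots = c_{t-2} = 1$ and $c_{t-1} \equiv 1-t \pmod p$, with a one-entry tweak if $p \mid t-1$). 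Then $s_k = \sum_i d_i$ is constant in $k$: setting it to $0$ (all $d_i = 0$) proves (1), while setting it to $1$ (take $d_0 = 1$, others $0$) gives $\mathrm{ord}(s_k) = p$ and proves (2).

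The main obstacle lies in part (3). A naive attempt in $\ZZ_4$ fails, since if every row of $A$ were a permutation of $\ZZ_4$ and every column sum were $0 \pmod 4$, then summing all entries two ways would force $4 \mid 6t$, impossible for $t$ odd. I would therefore use $G = \ZZ_2 \times \ZZ_2$ (same underlying set, different group structure), in which every element has order at most $2$. The key combinatorial ingredient is a triple of permutations $\sigma_0, \sigma_1, \sigma_2$ of $G$ whose pointwise sum vanishes identically; such a triple exists, for example
\[
\sigma_0 = (00, 01, 10, 11), \quad \sigma_1 = (01, 10, 00, 11), \quad \sigma_2 = (01, 11, 10, 00),
\]
each listed as its image sequence on $(00, 01, 10, 11)$, as a direct check confirms. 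Taking these as rows $0, 1, 2$ of $A$ and the identity permutation in the remaining $t - 3$ rows (an even number, since $t$ is odd) makes every column sum vanish in $G$, because each element of $G$ has order dividing $2$. Thus the anticipated main difficulty is the existence of the pointwise-zero-summing triple of permutations of $\ZZ_2 \times \ZZ_2$ (equivalently, a complete mapping of $\ZZ_2 \times \ZZ_2$); the explicit example above dispatches it.
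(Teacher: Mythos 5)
Your proof is correct. For parts (1) and (2) it is the paper's own argument in a different notation: your requirement that each row of the array $A$ be a permutation of $\ZZ_p$ is their requirement that $d_j^{(0)},\ldots,d_j^{(p-1)}$ be pairwise distinct for each $j$, your column-sum condition is their condition $(*)$ with $\delta=0$ or $1$, and your affine choice $v_{i,k}=c_ik+d_i$ with $c_0=\cdots=c_{t-2}=1$, $c_{t-1}=1-t$ reproduces their explicit solution exactly (you should spell out the ``one-entry tweak'' for $p\mid t-1$, e.g.\ $c_{t-2}=c_{t-1}=-2^{-1}(t-2)$, which is nonzero since then $t-2\equiv-1\pmod p$; this is precisely what the paper does). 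Part (3) is where you genuinely diverge. The paper does not make its factors unions of single translations: it writes down four explicit directed $t$-cycles $C_0^{(0)},\ldots,C_0^{(3)}$ whose arcs carry assorted $j$-differences, and obtains each factor $F^{(i)}$ as the orbit of $C_0^{(i)}$ under the Klein group $\langle\tau,\rho^2\rangle$ acting regularly on each part of size $4$; that each $F^{(i)}$ is a $\vec{C}_t$-factor and that the four factors cover each $j$-difference exactly once is then a finite verification. You instead replace the cyclic structure $\ZZ_4$ on the part by $\ZZ_2\times\ZZ_2$ --- correctly identifying the parity obstruction that makes $\ZZ_4$ unusable for odd $t$ --- and reduce the whole statement to exhibiting three permutations of the Klein group with pointwise sum zero (equivalently, a complete mapping of $\ZZ_2\times\ZZ_2$), which your explicit triple supplies; I have checked it. This buys a construction that is uniform across all three parts, keeps every factor a union of translations, and makes the verification a four-element computation, whereas the paper's version requires checking the hand-built cycles and their orbits case by case. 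Both are valid; yours is arguably the cleaner packaging of the same underlying group-theoretic fact that the paper exploits implicitly through $\langle\tau,\rho^2\rangle$.
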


\begin{proof}
For any $s \in \ZZ^+$, let the vertex set and arc set of $\vec{C}_t \wr \bar{K}_s$  be
$$V=\{ x_{j,i}: j \in \ZZ_t, i \in \ZZ_s \} \quad \mbox{ and } \quad A=\{ (x_{j,i_1},x_{j+1,i_2}): j \in \ZZ_t, i_1,i_2 \in \ZZ_s \},$$
respectively. We shall call an arc of the form $(x_{j,i},x_{j+1,i+d})$, for $d \in \ZZ_s$, an arc of {\em $j$-difference} $d$.
Moreover, define a permutation $\rho$ on $V$ by
$$\rho=( x_{0,0} \, x_{0,1} \, \ldots x_{0,s-1} )
( x_{1,0} \, x_{1,1} \, \ldots x_{1,s-1} ) \ldots
( x_{t-1,0} \, x_{t-1,1} \, \ldots x_{t-1,s-1} ).$$

For Claims (1) and (2), we have $s=p$, an odd prime, and we let $\delta=0$ for Claim (1), and $\delta=1$ for Claim (2). In both cases, as we show below, it suffices to find elements $d_j^{(i)} \in \ZZ_p$, for $j \in \ZZ_t$ and $i \in \ZZ_p$, such that
$$\sum_{j=0}^{t-1} d_j^{(i)}=\delta \qquad \mbox{for all } i \in \ZZ_p, \eqno{(*)}$$
and $$d_j^{(0)},d_j^{(1)},\ldots, d_j^{(p-1)} \qquad \mbox{are pairwise distinct for each } j \in \ZZ_t.$$

If $t-1 \not\equiv 0 \pmod{p}$, then we may choose
$$d_0^{(i)}=\ldots=d_{t-2}^{(i)}=i \qquad \mbox{ and }  \qquad d_{t-1}^{(i)}=\delta-(t-1)i.$$
Otherwise, that is, if $t-1 \equiv 0 \pmod{p}$, then $t-2 \not\equiv 0 \pmod{p}$, and we choose
$$d_0^{(i)}=\ldots=d_{t-3}^{(i)}=i \qquad \mbox{ and } \qquad d_{t-2}^{(i)}=d_{t-1}^{(i)}=2^{-1}(\delta-(t-2)i).$$

\begin{enumerate}[(1)]
\item Let $\delta=0$ and suppose we have $d_j^{(i)} \in \ZZ_p$, for $j \in \ZZ_t$ and $i \in \ZZ_p$, satisfying Condition $(*)$. Fix $i \in \ZZ_p$ and define the following directed closed walk in $\vec{C}_t \wr \bar{K}_p$:
    $$C^{(i)}=(x_{0,0}, x_{1,d_0^{(i)}}, x_{2,d_0^{(i)}+d_1^{(i)}}, \ldots, x_{t-1,\sum_{j=0}^{t-2} d_j^{(i)}}, x_{0,0} ).$$
    It is easy to see that $C^{(i)}$ is in fact a directed $t$-cycle. Since $\sum_{j=0}^{t-1} d_j^{(i)}=0$, it contains exactly one arc of each $j$-difference $d_j^{(i)}$, for $j \in \ZZ_t$.

    Let $F^{(i)}= C^{(i)} \cup \rho(C^{(i)}) \cup \ldots \cup \rho^{p-1}(C^{(i)})$, and it can be verified that  $F^{(i)}$
     is a $\vec{C}_t$-factor of $\vec{C}_t \wr \bar{K}_p$. Moreover, the directed cycles in $F^{(i)}$ jointly contain all arcs of $j$-difference $d_j^{(i)}$, for all $j \in \ZZ_t$.

    Since for all $j \in \ZZ_t$, we have that $d_j^{(0)},d_j^{(1)}, \ldots, d_j^{(p-1)}$ are pairwise distinct, it follows that
    $\F= \{ F^{(i)}:  i \in \ZZ_p \}$
    is a $\vec{C}_t$-factorization of $\vec{C}_t \wr \bar{K}_p$.

\item Now let $\delta=1$ and suppose we have $d_j^{(i)} \in \ZZ_p$, for $j \in \ZZ_t$ and $i \in \ZZ_p$, satisfying Condition $(*)$. Fix $i \in \ZZ_p$ and define the following directed closed walk in $\vec{C}_t \wr \bar{K}_p$:
    \begin{eqnarray*}
    C^{(i)} &=& (x_{0,0}, x_{1,d_0^{(i)}}, x_{2,d_0^{(i)}+d_1^{(i)}}, \ldots, x_{t-1,\sum_{j=0}^{t-2} d_j^{(i)}}, \\
    &&
    x_{0,1}, x_{1,1+d_0^{(i)}}, x_{2,1+d_0^{(i)}+d_1^{(i)}}, \ldots, x_{t-1,1+\sum_{j=0}^{t-2} d_j^{(i)}}, \\
    && \ldots, \\
    &&
    x_{0,p-1}, x_{1,p-1+d_0^{(i)}},
    x_{2,p-1+d_0^{(i)}+d_1^{(i)}}, \ldots, x_{t-1,p-1+\sum_{j=0}^{t-2} d_j^{(i)}},
    x_{0,0} ).
    \end{eqnarray*}
    Since $\sum_{j=0}^{t-1} d_j^{(i)}=1$, we have that $C^{(i)}$ is  a directed $pt$-cycle, and
    it contains all arcs of each $j$-difference $d_j^{(i)}$, for $j \in \ZZ_t$.

    Since for all $j \in \ZZ_t$, we have that $d_j^{(0)},d_j^{(1)}, \ldots, d_j^{(p-1)}$ are pairwise distinct, it follows that
    $$\C= \{ C^{(i)}:  i \in \ZZ_t \}$$
    is a $\vec{C}_{pt}$-decomposition and hence a $\vec{C}_{pt}$-factorization of $\vec{C}_t \wr \bar{K}_p$.

\begin{figure}[t]
\centerline{\includegraphics[scale=0.6]{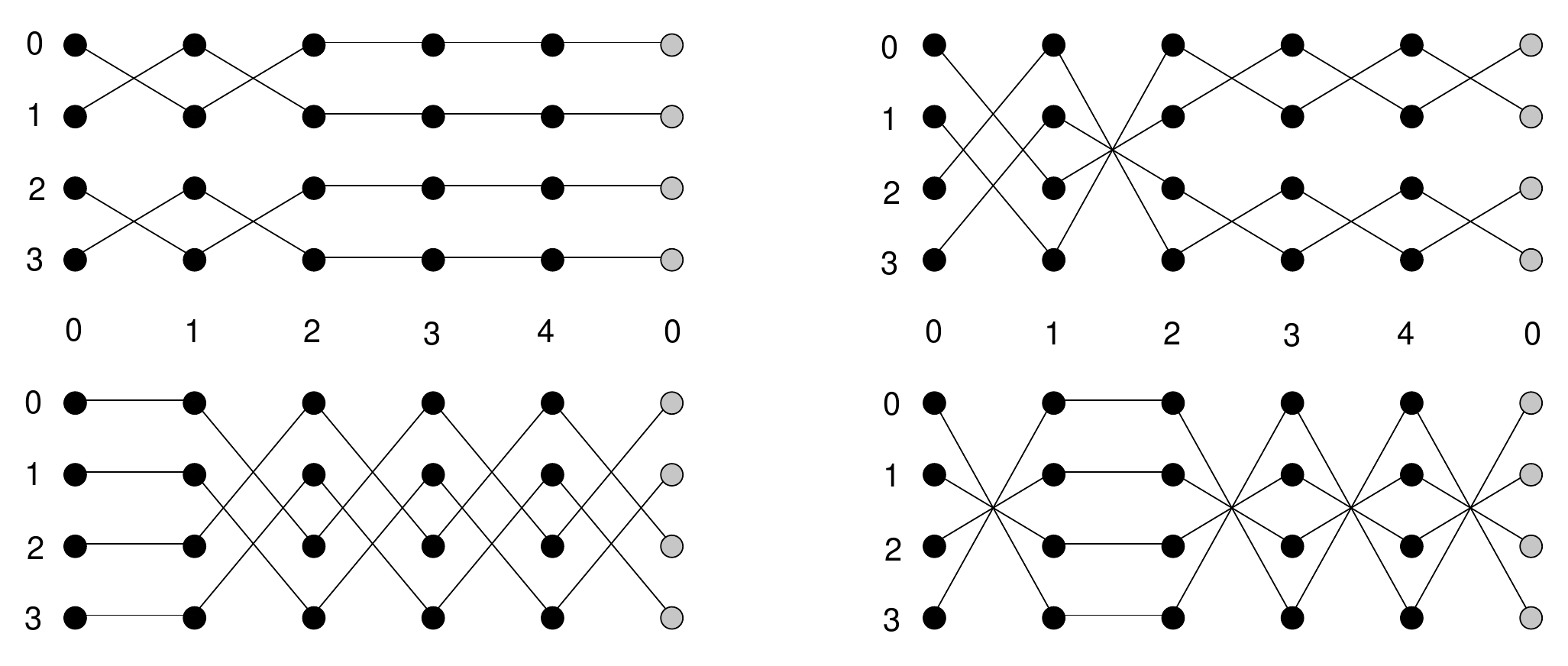}}
\caption{$\vec{C}_t$-factors $F^{(0)}, F^{(1)}$ (top), and $F^{(2)}, F^{(3)}$ (bottom) in a $\vec{C}_t$-factorization of $\vec{C}_t \wr \bar{K}_4$ for $t=5$. (All arcs are oriented from left to right, and only the subscripts of the vertices are specified.)}
\label{fig:pic1}
\end{figure}

\item We now have $s=4$, and we define another permutation, $\tau$, on $V$ by
$$\tau=( x_{0,0} \, x_{0,1} ) ( x_{0,2} \, x_{0,3} )
( x_{1,0} \, x_{1,1} ) ( x_{1,2} \, x_{1,3} ) \ldots
( x_{t-1,0} \, x_{t-1,1} ) ( x_{t-1,2} \, x_{t-1,3} ).$$
Define the following directed $t$-cycles in $\vec{C}_t \wr \bar{K}_4$.
\begin{eqnarray*}
C_0^{(0)} &=& (x_{0,0}, x_{1,1}, x_{2,0},x_{3,0}, x_{4,0}, x_{5,0}, \ldots,x_{t-1,0},x_{0,0} ) \\
C_0^{(1)} &=& (x_{0,0}, x_{1,2}, x_{2,1},x_{3,0}, x_{4,1}, x_{5,0}, \ldots,x_{t-1,1},x_{0,0} ) \\
C_0^{(2)} &=& (x_{0,0}, x_{1,0}, x_{2,2},x_{3,0}, x_{4,2}, x_{5,0}, \ldots,x_{t-1,2},x_{0,0} ) \\
C_0^{(3)} &=& (x_{0,0}, x_{1,3}, x_{2,3},x_{3,0}, x_{4,3}, x_{5,0}, \ldots,x_{t-1,3},x_{0,0} )
\end{eqnarray*}
Then, for each $i \in \ZZ_4$, let
$$C_1^{(i)}=\tau(C_0^{(i)}), \quad C_2^{(i)}=\rho^2(C_0^{(i)}), \quad \mbox{and} \quad C_3^{(i)}=\tau(C_2^{(i)}),$$
and let $F^{(i)}=C_0^{(i)} \cup C_1^{(i)} \cup C_2^{(i)} \cup C_3^{(i)}$.
Figure~\ref{fig:pic1} illustrates the case $t=5$. It is not difficult to verity that each $F^{(i)}$ is a $\vec{C}_t$-factor in $\vec{C}_t \wr \bar{K}_4$, and that $F^{(0)},\ldots,F^{(3)}$, for each $j \in \ZZ_t$, jointly contain exactly one arc of each $j$-difference. Hence
$\F= \{ F^{(i)}:  i \in \ZZ_4 \}$
    is a $\vec{C}_t$-factorization of $\vec{C}_t \wr \bar{K}_4$.
\end{enumerate}
\end{proof}

\begin{cor}\label{cor:blow-digraph}
Let $t\ge 3$ be an integer,  and let $D$ be a digraph admitting a $\vec{C}_t$-factorizaton. Let $s \ge 3$ be an odd integer, and $\ell$ a non-negative integer. Then the following hold.
\begin{enumerate}[(a)]
\item The digraph $D \wr \bar{K}_{s}$ admits a $\vec{C}_t$-factorizaton.
\item The digraph $D \wr \bar{K}_{s}$ admits a $\vec{C}_{st}$-factorizaton.
\item If $t$ is odd, then the digraph $D \wr \bar{K}_{4^{\ell}s}$ admits a $\vec{C}_t$-factorizaton.
\end{enumerate}
\end{cor}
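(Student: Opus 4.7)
The plan is to reduce all three parts to iterated applications of Lemma~\ref{lem:blow1}, exploiting two elementary facts: the wreath product of digraphs is associative, and $\bar{K}_a \wr \bar{K}_b \cong \bar{K}_{ab}$. Consequently, for any odd $s \ge 3$ written as a product $s = p_1 p_2 \cdots p_k$ of (not necessarily distinct) odd primes,
$$D \wr \bar{K}_s \cong D \wr \bar{K}_{p_1} \wr \bar{K}_{p_2} \wr \cdots \wr \bar{K}_{p_k},$$
and analogously $D \wr \bar{K}_{4^\ell s}$ equals $D \wr \bar{K}_s$ followed by $\ell$ further wreath factors of $\bar{K}_4$.

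The key observation is a \emph{lifting principle}: if a digraph $D'$ admits a $\vec{C}_r$-factorization and $\vec{C}_r \wr \bar{K}_q$ admits a $\vec{C}_{r'}$-factorization, then so does $D' \wr \bar{K}_q$. To see this, fix any $\vec{C}_r$-factor $F$ of $D'$, write it as a vertex-disjoint union $F = C_1 \sqcup \cdots \sqcup C_m$ of directed $r$-cycles, and note that $F \wr \bar{K}_q$ is the vertex-disjoint union of the blow-ups $C_i \wr \bar{K}_q \cong \vec{C}_r \wr \bar{K}_q$. A $\vec{C}_{r'}$-factorization of each $C_i \wr \bar{K}_q$ (supplied by hypothesis) can be glued layer-by-layer across the $m$ disjoint pieces into a $\vec{C}_{r'}$-factorization of $F \wr \bar{K}_q$; doing this for every $\vec{C}_r$-factor of $D'$ (whose arc sets partition $A(D')$, hence $A(D' \wr \bar{K}_q)$) produces the desired global factorization.

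With the lifting principle in place, parts (a) and (b) are proved by induction on $k$, the number of odd prime factors of $s$. For (a), the inductive step upgrades a $\vec{C}_t$-factorization of $D \wr \bar{K}_{p_1 \cdots p_j}$ to a $\vec{C}_t$-factorization of $D \wr \bar{K}_{p_1 \cdots p_{j+1}} \cong (D \wr \bar{K}_{p_1 \cdots p_j}) \wr \bar{K}_{p_{j+1}}$ via Lemma~\ref{lem:blow1}(1) with $p = p_{j+1}$. For (b), one tracks the growing cycle length: the inductive hypothesis gives a $\vec{C}_{(p_1 \cdots p_j)t}$-factorization of $D \wr \bar{K}_{p_1 \cdots p_j}$, and Lemma~\ref{lem:blow1}(2)---with the lemma's ``$t$'' instantiated as $(p_1 \cdots p_j)t$ and $p = p_{j+1}$---then yields a $\vec{C}_{(p_1 \cdots p_{j+1})t}$-factorization; the base case $k=1$ is Lemma~\ref{lem:blow1}(2) applied directly through the lifting principle. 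For (c), assume $t$ is odd, apply (a) first to obtain a $\vec{C}_t$-factorization of $D \wr \bar{K}_s$, and then iterate the lifting principle using Lemma~\ref{lem:blow1}(3) a total of $\ell$ times, each iteration lifting a $\vec{C}_t$-factorization across a $\wr \bar{K}_4$ factor (the hypothesis $t$ odd is exactly what Lemma~\ref{lem:blow1}(3) requires).

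There is no real obstacle beyond careful bookkeeping: the lifting principle is the only substantive content, and all three parts then collapse into parallel structural inductions. The only point requiring vigilance is ensuring that the cycle length appearing in the inductive hypothesis matches the cycle length to which the relevant case of Lemma~\ref{lem:blow1} is being applied---trivial for (a) and (c), and the whole point of the inductive set-up in (b).
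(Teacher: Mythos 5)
Your proposal is correct and follows essentially the same route as the paper: both reduce to one prime (or one $\bar K_4$) factor at a time, observe that blowing up a $\vec{C}_t$-factor yields vertex-disjoint copies of $\vec{C}_t \wr \bar K_p$, and merge the factorizations of these components supplied by Lemma~\ref{lem:blow1} into global factors. Your ``lifting principle'' is just a more explicitly stated version of the paper's one-line argument, and the inductive bookkeeping of the growing cycle length in part (b) matches the paper's ``repeating this process for all prime divisors of $s$.''
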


\begin{proof}
\begin{enumerate}[(a)]
\item Let  $\cal C$ be a $\vec{C}_t$-factorizaton of $D$, and take any odd prime $p|s$.
    Then $\{ F \wr \bar{K}_{p}: F \in \cal C \}$ is a decomposition of $D \wr \bar{K}_{p}$ into spanning subdigraphs whose connected components are isomorphic to $\vec{C}_t \wr \bar{K}_{p}$. By Lemma~\ref{lem:blow1}(1), each such component admits a $\vec{C}_t$-factorizaton. Therefore, $D \wr \bar{K}_{p}$ admits a $\vec{C}_t$-factorizaton.

    Since for primes $p$ and $p'$ we have that $(D \wr \bar{K}_{p}) \wr \bar{K}_{p'} \cong D \wr \bar{K}_{pp'}$, repeating this process for all prime divisors of $s$ yields the desired result.

\item This is similar to (a), using Lemma~\ref{lem:blow1}(2).

\item This is similar to (a), using Lemma~\ref{lem:blow1}(1) and (3).
\end{enumerate}
\end{proof}

The above corollary shows how to ``blow up the holes'' in a $\vec{C}_t$-factorizaton by either keeping the cycle length, or ``blowing up'' the cycle length by the same odd factor. Note that Statement (b) also follows from \cite[Lemma 2.11]{PauSiv}, and Statement (a) can be obtained from \cite[Corollary 5.7]{Pio} by appropriately orienting each cycle.

\section{$\vec{C}_t$-factorizaton of $K_{n[m]}^*$ for $m$ odd, $n$ even: the easy cases}


\begin{prop}\label{pro:easy}
Let $m$, $n$, and $t$ be integers greater than 1 with $m(n-1)$ odd, $t|mn$, and $t$ even if $n=2$. Furthermore, let $g=\gcd(n,t)$. Then $K_{n[m]}^\ast$ admits a $\vec{C}_t$-factorizaton in each of the following cases:
\begin{enumerate}[(1)]
\item $g$ is even and $(g,n) \not\in \{ (4,4),(6,6) \}$; and
\item $g$ is odd, $g \ge 3$, and $(g,n) \ne (3,6)$.
\end{enumerate}
\end{prop}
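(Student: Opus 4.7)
The assumption that $m(n-1)$ is odd forces $m$ odd (so $m \ge 3$) and $n$ even. My plan is to combine Theorem~\ref{thm:Kn*}, which provides a $\vec{C}_g$-factorization of $K_n^\ast$, with Corollary~\ref{cor:blow-digraph}, which lets one blow up such a factorization by $\bar{K}_s$ either preserving the cycle length $g$ or lengthening it to $gs$ (for $s \ge 3$ odd). Since $K_{n[m]}^\ast \cong K_n^\ast \wr \bar{K}_m$ and $(D \wr \bar{K}_a) \wr \bar{K}_b \cong D \wr \bar{K}_{ab}$, the two operations can be chained.

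Write $n = g n'$ and $t = g t'$ with $\gcd(n', t') = 1$. From $t \mid mn$ I obtain $t' \mid m n'$, hence $t' \mid m$ by coprimality, and therefore $t'$ is odd (since $m$ is). Write $m = m' t'$ with $m'$ a positive odd integer. This setup automatically delivers the oddness hypotheses required by Corollary~\ref{cor:blow-digraph}.

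Next I would verify that Theorem~\ref{thm:Kn*} applies to $K_n^\ast$ with cycle length $g$. Since $g \mid n$, only the exceptional triples $(n,g) \in \{(6,3),(4,4),(6,6)\}$ need to be dodged. In case (1), $g$ is even, which rules out $(6,3)$, and the remaining two are excluded directly by hypothesis. In case (2), $g$ is odd with $g \ge 3$, which rules out $(4,4)$ and $(6,6)$, while the hypothesis $(g,n) \ne (3,6)$ handles $(6,3)$. Thus $K_n^\ast$ admits a $\vec{C}_g$-factorization in both cases.

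Finally, I split on whether $t' = 1$ or $t' \ge 3$. If $t = g$, apply Corollary~\ref{cor:blow-digraph}(a) with $s = m$ (odd, at least $3$) to turn the $\vec{C}_g$-factorization of $K_n^\ast$ into a $\vec{C}_t$-factorization of $K_n^\ast \wr \bar{K}_m \cong K_{n[m]}^\ast$. If $t' \ge 3$ and $m' = 1$, apply Corollary~\ref{cor:blow-digraph}(b) directly with $s = t'$ to obtain a $\vec{C}_{gt'} = \vec{C}_t$-factorization of $K_n^\ast \wr \bar{K}_{t'} = K_{n[m]}^\ast$. If $t' \ge 3$ and $m' \ge 3$, first use Corollary~\ref{cor:blow-digraph}(a) with $s = m'$ to lift the $\vec{C}_g$-factorization to $K_n^\ast \wr \bar{K}_{m'}$, then Corollary~\ref{cor:blow-digraph}(b) with $s = t'$ to reach $K_n^\ast \wr \bar{K}_{m'} \wr \bar{K}_{t'} \cong K_{n[m]}^\ast$ with cycle length $gt' = t$. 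The argument is essentially bookkeeping: the exclusion clauses in (1) and (2) are engineered to evade exactly the exceptional triples of Theorem~\ref{thm:Kn*}, and the combinatorial work is already packaged into Corollary~\ref{cor:blow-digraph}, so the only real obstacle is making the case split exhaustive and checking the parity constraints, both of which follow routinely from $m$ being odd.
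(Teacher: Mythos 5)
Your argument works and essentially matches the paper's for $g \ge 4$ even and for $g$ odd with $g \ge 3$ (for $g=3$, $n \ne 6$ you invoke Theorem~\ref{thm:Kn*} where the paper invokes Theorem~\ref{thm:Ben} applied to $K_n^\ast \wr \bar{K}_{3m/t}$; both routes are legitimate). But there is a genuine gap at $g=2$, which your case (1) must cover: there you would start from a $\vec{C}_2$-factorization of $K_n^\ast$ and feed it into Corollary~\ref{cor:blow-digraph}, but that corollary (and the underlying Lemma~\ref{lem:blow1}) is stated only for cycle length $t \ge 3$. Concretely, for $(m,n,t)=(3,4,6)$ you have $g=2$, and your recipe would need a $\vec{C}_6$-factorization of $\vec{C}_2 \wr \bar{K}_3 \cong K_{3,3}^\ast$ to come out of Corollary~\ref{cor:blow-digraph}(b), which that corollary does not provide. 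The same problem occurs for every instance with $n=2$, where $g=2$ always.

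The paper closes this case by a different route: since $n$ is even, $K_n$ has a 1-factorization, so $K_{n[m]}^\ast$ decomposes resolvably into copies of $K_{m,m}^\ast$; and since $g=2$ forces $\frac{t}{2} \mid m$, Lemma~\ref{lem:bipartite} supplies a $\vec{C}_t$-factorization of $K_{m,m}^\ast$. You need to add this argument (or some substitute, e.g.\ an ad hoc extension of Lemma~\ref{lem:blow1} to cycle length $2$, noting $\vec{C}_2 \wr \bar{K}_p \cong K_{p,p}^\ast$) before your case (1) is complete. The rest of your bookkeeping --- $t' \mid m$ by coprimality, the oddness of $t'$ and $m'$, the exclusion of the exceptional pairs of Theorem~\ref{thm:Kn*}, and the order of application of Corollary~\ref{cor:blow-digraph}(a) and (b) --- is correct.
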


\begin{proof} Recall that $K_{n[m]}^\ast  \cong  K_n^\ast \star \bar{K}_{m}$. From the assumptions on $m$, $n$, and $t$ it follows that $m$ is odd, $n$ is even, $\frac{t}{g}$ is odd and divides $m$, and $\frac{mg}{t}$ is odd as well.
\begin{enumerate}[(1)]
\item Let $g$ be even. Assume first that $g \ge 4$. Since $g|n$  and $(g,n) \not\in \{ (4,4),(6,6)\}$, by Theorem~\ref{thm:Kn*}, there exists a $\vec{C}_g$-factorizaton of $K_n^\ast$. Hence, by Corollary~\ref{cor:blow-digraph}(b),  there exists a $\vec{C}_t$-factorizaton of $K_n^\ast \star \bar{K}_{\frac{t}{g}}$. Finally, by Corollary~\ref{cor:blow-digraph}(a),  there exists a $\vec{C}_t$-factorizaton of $K_n^\ast \star \bar{K}_{m}$.

Now let $g=2$, which implies $\frac{t}{2}|m$.
Since $n$ is even, $K_n$ admits a 1-factorization. Consequently, $K_n^\ast \star \bar{K}_m$ admits a resolvable decomposition into copies of $K_{m,m}^\ast$. Since $\frac{t}{2} \vert m$, by Lemma~\ref{lem:bipartite}, there exists a $\vec{C}_t$-factorizaton of $K_{m,m}^\ast$. Therefore, $K_n^\ast \star \bar{K}_m$ admits a $\vec{C}_t$-factorizaton.

\item Let $g$ be odd, $g \ge 3$.

First, assume $g=3$ and $n \ne 6$. By Theorem~\ref{thm:Ben}, there exists a $\vec{C}_{3}$-factorizaton of $K_n^\ast \wr \bar{K}_{\frac{3m}{t}}$. Hence by Corollary~\ref{cor:blow-digraph}(b), there exists a $\vec{C}_t$-factorizaton of $K_n^\ast \wr \bar{K}_{m}$.

Finally, let $g \ge 5$. Since $g|n$, by Theorem~\ref{thm:Kn*}, there exists a $\vec{C}_g$-factorizaton of $K_{n}^\ast$.  Hence by Corollary~\ref{cor:blow-digraph}(b), there exists a $\vec{C}_t$-factorizaton of $K_{n}^\ast  \wr \bar{K}_{\frac{t}{g}}$, and thus by Corollary~\ref{cor:blow-digraph}(a), there exists a $\vec{C}_t$-factorizaton of $K_{n}^\ast  \wr \bar{K}_{m}$.
\end{enumerate}
\end{proof}

Note that Proposition~\ref{pro:easy} leaves open only the following cases of Problem~\ref{prob} with  $m$ odd, $n$ even,  and $g=\gcd(n,t)$: case
$g=1$ and cases $(g,n)\in \{(3,6),(4,4),(6,6)\}$.

\section{$\vec{C}_t$-factorizaton of $K_{n[m]}^*$ for $m$ odd, $n$ even: the case $\gcd(n,t)=1$}


The following lemma and its corollary will allow us to reduce this case to a few crucial subcases, namely, to subcases $n=4$, $n=8$, and $n=2p$ for an odd prime $p$.

\begin{lemma}\label{lem:filling}
Let $t \ge 3$ be odd, $n_1 \ge 3$, and $n_2=4^\ell s$ for some integer $\ell \ge 0$ and odd integer $s \ge 1$. Assume that both $K_{n_1[t]}^\ast$ and $K_{n_2[t]}^\ast$ admit  $\vec{C}_t$-factorizations. Then $K_{n_1n_2[t]}^\ast$ admits a $\vec{C}_t$-factorization.
\end{lemma}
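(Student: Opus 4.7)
The plan is to decompose $K_{n_1 n_2[t]}^*$ into two arc-disjoint subdigraphs, $\vec{C}_t$-factorize each separately, and concatenate. I would group the $n_1 n_2$ parts (each of size $t$) into $n_1$ \emph{super-blocks} of $n_2$ parts each: the arcs lying inside a single super-block then form $n_1$ pairwise vertex-disjoint copies of $K_{n_2[t]}^*$, while the arcs joining distinct super-blocks form $K_{n_1[n_2 t]}^*$. Formally, wreathing the standard identity
\[
K_{n_1 n_2}^* \;=\; K_{n_1[n_2]}^* \;\cup\; \bigcup_{i=1}^{n_1} K_{n_2}^*
\]
(with the $n_1$ complete symmetric digraphs on the right vertex-disjoint) by $\bar{K}_t$ gives the arc-disjoint decomposition
\[
K_{n_1 n_2[t]}^* \;=\; K_{n_1[n_2 t]}^* \;\cup\; \bigcup_{i=1}^{n_1} K_{n_2[t]}^*.
\]

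For the intra-super-block part $\bigcup_{i=1}^{n_1} K_{n_2[t]}^*$, the hypothesis supplies a $\vec{C}_t$-factorization inside each of the $n_1$ vertex-disjoint copies; a simple in-degree count shows that each such factorization consists of exactly $t(n_2-1)$ factors. Labelling these factors by $\{1,\dots,t(n_2-1)\}$ inside each copy and, for each index $j$, taking the union across copies of the $j$-th factor yields a $\vec{C}_t$-factor of $K_{n_1 n_2[t]}^*$. This produces a $\vec{C}_t$-factorization of the intra-super-block subdigraph.

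For the inter-super-block part $K_{n_1[n_2 t]}^*$, I would start from the hypothesized $\vec{C}_t$-factorization of $D := K_{n_1[t]}^* \cong K_{n_1}^* \wr \bar{K}_t$. Because $t$ is odd and $n_2 = 4^\ell s$ with $s$ odd, Corollary~\ref{cor:blow-digraph}(c) applies to $D$ with blow-up factor $n_2$, producing a $\vec{C}_t$-factorization of $D \wr \bar{K}_{n_2}$. Using associativity of the wreath product together with $\bar{K}_t \wr \bar{K}_{n_2} \cong \bar{K}_{n_2 t}$, I identify
\[
D \wr \bar{K}_{n_2} \;\cong\; K_{n_1}^* \wr (\bar{K}_t \wr \bar{K}_{n_2}) \;\cong\; K_{n_1}^* \wr \bar{K}_{n_2 t} \;=\; K_{n_1[n_2 t]}^*,
\]
which supplies the required factorization. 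Concatenating the two $\vec{C}_t$-factorizations then yields the desired $\vec{C}_t$-factorization of $K_{n_1 n_2[t]}^*$.

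I do not anticipate a real conceptual obstacle here: the combinatorial heavy lifting is already packaged in Corollary~\ref{cor:blow-digraph}(c), and the hypotheses ``$t$ odd'' and ``$n_2 = 4^\ell s$'' in the lemma are imposed precisely so that that corollary applies. The only point to verify carefully is the arc-disjoint decomposition of $K_{n_1 n_2[t]}^*$ into $K_{n_1[n_2 t]}^*$ and $n_1$ vertex-disjoint copies of $K_{n_2[t]}^*$, but this follows immediately from distributivity of the wreath product with $\bar{K}_t$ over arc-disjoint unions of digraphs on a common vertex set.
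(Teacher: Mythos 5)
Your proposal is correct and takes essentially the same route as the paper: both decompose $K_{n_1n_2[t]}^\ast \cong K_{n_1}^\ast \wr K_{n_2[t]}^\ast$ into $K_{n_1[n_2t]}^\ast$ plus $n_1$ vertex-disjoint copies of $K_{n_2[t]}^\ast$, handle the former via Corollary~\ref{cor:blow-digraph}(c) applied to the assumed factorization of $K_{n_1[t]}^\ast$, and the latter by pairing up factors across the disjoint copies. Your write-up merely makes explicit the factor count $t(n_2-1)$ and the wreath-product identifications that the paper leaves implicit.
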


\begin{proof}
As $K_{n_1[t]}^\ast$ admits a $\vec{C}_t$-factorization, by Corollary~\ref{cor:blow-digraph}(c), so does $K_{n_1[t]}^\ast \wr \bar{K}_{4^\ell s} \cong K_{n_1[4^\ell st]}^\ast$.
Since $K_{n_1n_2[t]}^\ast \cong K_{n_1}^\ast \wr K_{n_2[t]}^\ast$ decomposes into $K_{n_1[4^\ell st]}^\ast$ and $n_1$ pairwise disjoint copies of $K_{n_2[t]}^\ast$, which by assumption admits a $\vec{C}_t$-factorization, we conclude that $K_{n_1n_2[t]}^\ast$ admits a $\vec{C}_t$-factorization.
\end{proof}

\begin{cor}\label{cor:reduction}
Let $t$ be odd, $t \ge 3$.
\begin{enumerate}[(1)]
\item Assume that each of $K_{4[t]}^\ast$ and $K_{8[t]}^\ast$ admits a $\vec{C}_t$-factorization. Then there exists a $\vec{C}_t$-factorization of $K_{n[t]}^\ast$ for all $n \equiv 0 \pmod{4}$.
\item Let $p$ be an odd prime, and assume that $K_{2p[t]}^\ast$ admits a $\vec{C}_t$-factorization. Then there exists a $\vec{C}_t$-factorization of $K_{n[t]}^\ast$ for all $n=2ps$ with $s$ odd.
\item Assume there exists a $\vec{C}_t$-factorization of $K_{n[t]}^\ast$ for all $n \in \{4,8\} \cup \{ 2p: p \mbox{ an odd prime} \}$. Then there exists a $\vec{C}_t$-factorization of $K_{n[t]}^\ast$ for all even $n \ge 4$.
\end{enumerate}
\end{cor}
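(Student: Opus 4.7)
The plan is to derive all three parts as straightforward applications of Lemma~\ref{lem:filling}, combined with the following auxiliary fact: for any odd integer $r \ge 3$ (with $t$ odd, $t \ge 3$), the digraph $K_{r[t]}^*$ admits a $\vec{C}_t$-factorization by Corollary~\ref{cor:Liu}, since $t(r-1)$ is even, $t \mid rt$, and the triple $(m,n,t)=(t,r,t)$ avoids the forbidden list (as $t$ odd forces $m=t$ to be neither $2$ nor $6$). I will invoke this whenever I need a ``free'' factorization of $K_{r[t]}^*$ for odd $r \ge 3$.

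For Part~(1), I would proceed by strong induction on $n$ over multiples of $4$, with $n=4$ and $n=8$ as base cases supplied by the hypothesis. Given $n \ge 12$ with $4 \mid n$, write $n = 2^v q$ with $q$ odd and $v \ge 2$. If $q \ge 3$, apply Lemma~\ref{lem:filling} with $n_1 = 2^v$ and $n_2 = q$: here $n_2 = 4^0 \cdot q$ has the required form, $K_{n_2[t]}^*$ is handled by the auxiliary fact, and $n_1 = 2^v$ is either $4$, $8$, or a multiple of $4$ strictly smaller than $n$, hence covered by a base case or the induction hypothesis. If $q = 1$, then $v \ge 4$, and I would instead take $n_1 = 2^{v-2}$ and $n_2 = 4 = 4^1 \cdot 1$; here $n_1$ is again $4$, $8$, or a smaller multiple of $4$, handled by base case or induction.

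For Part~(2), Lemma~\ref{lem:filling} applies in a single step: when $s \ge 3$ set $n_1 = 2p$ and $n_2 = s = 4^0 \cdot s$; the hypothesis supplies $K_{n_1[t]}^*$ and the auxiliary fact supplies $K_{n_2[t]}^*$. The case $s=1$ is exactly the hypothesis.

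For Part~(3), any even $n \ge 4$ splits by residue modulo $4$. If $4 \mid n$, Part~(1) applies, since the hypothesis includes the required base cases $n=4,8$. If $n \equiv 2 \pmod 4$, write $n = 2k$ with $k \ge 3$ odd, pick any (odd) prime $p \mid k$, and observe $n = 2p \cdot (k/p)$ with $k/p$ odd; Part~(2) finishes the job. I anticipate no genuine obstacle here—the only care needed is bookkeeping: in each use of Lemma~\ref{lem:filling}, check that the factor labelled $n_2$ has the form $4^\ell s$ with $s$ odd and that the other factor's factorization is in hand from the hypothesis, the induction, or Corollary~\ref{cor:Liu}.
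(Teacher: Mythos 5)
Your proposal is correct and follows essentially the same route as the paper: every part reduces to Lemma~\ref{lem:filling}, with Corollary~\ref{cor:Liu} supplying the factorization of $K_{r[t]}^\ast$ for odd $r\ge 3$ (since $t(r-1)$ is even), the hypotheses supplying the cases $n\in\{4,8,2p\}$, and part (3) splitting on $n\bmod 4$. The only difference is presentational — you organize part (1) as a strong induction on the power of $2$, where the paper iterates Lemma~\ref{lem:filling} explicitly with $n_1\in\{4,8\}$ fixed — and both versions are sound.
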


\begin{proof}
\begin{enumerate}[(1)]
\item Take any $n \equiv 0 \pmod{4}$. There are two cases to consider.

    Case 1:  $n=4^\ell s$ with $\ell \ge 1$ and $s$ odd. If $s=1$, then a repeated application of Lemma~\ref{lem:filling} with $n_1=4$ and $n_2=4, 4^2, \ldots, 4^{\ell-1}$ yields a $\vec{C}_t$-factorization of $K_{n[t]}^\ast$. If $s \ge 3$, then by Corollary~\ref{cor:Liu}, there exists a $\vec{C}_t$-factorization of $K_{s[t]}^\ast$. We can now use Lemma~\ref{lem:filling} with $n_1=4$ and $n_2=s,4s, 4^2s, \ldots, 4^{\ell-1}s$.

Case 2: $n=8 \cdot 4^\ell s$ with $\ell \ge 0$ and $s$ odd, and we may assume that $\ell \ge 1$ or $s \ge 3$. Hence, by Corollary~\ref{cor:Liu} and Case 1, there exists a $\vec{C}_t$-factorization of $K_{4^\ell s[t]}^\ast$. We can therefore use
 Lemma~\ref{lem:filling} with $n_1=8$ and $n_2=4^\ell s$.

\item Use Lemma~\ref{lem:filling} with $n_1=2p$ and $n_2=s$.
\item This follows directly from (1) and(2).
\end{enumerate}
\end{proof}

\subsection{Subcase $n \equiv 0 \pmod{4}$}

In the next two lemmas, we show that the assumptions from Corollary~\ref{cor:reduction}(1) indeed hold, that is, both $K_{4[t]}^\ast$  and $K_{8[t]}^\ast$ admit $\vec{C}_t$-factorizations.

\begin{lemma}\label{lem:4}
Let $t$ be odd, $t \ge 3$. Then $K_{4[t]}^\ast$ admits a $\vec{C}_t$-factorization.
\end{lemma}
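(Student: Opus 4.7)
Label the vertices of $K_{4[t]}^*$ as $V = \mathbb{Z}_4 \times \mathbb{Z}_t$ with parts $P_i = \{i\} \times \mathbb{Z}_t$, and let the cyclic group $G = \mathbb{Z}_t$ act by translation of the second coordinate. This action is free on $V$, preserves each part, and partitions the $12t^2$ arcs of $K_{4[t]}^*$ into $12t$ orbits of length $t$, each indexed by an arc-type $(a,b,d) \in \mathbb{Z}_4 \times \mathbb{Z}_4 \times \mathbb{Z}_t$ with $a \ne b$. Since a $\vec{C}_t$-factor uses $4t$ arcs, a $\vec{C}_t$-factorization of $K_{4[t]}^*$ must contain exactly $3t$ factors. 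My strategy is to exhibit three base $\vec{C}_t$-factors $F^{(0)}, F^{(1)}, F^{(2)}$ whose arc sets each consist of $4t$ arcs from $4t$ distinct $G$-orbits, and which together use all $12t$ orbits; then the $3t$ translates $\{F^{(i)} + k : 0 \le i \le 2,\ k \in \mathbb{Z}_t\}$ would form the required factorization.

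The first step is to dispose of $t = 3$, which is already covered by Theorem~\ref{thm:Ben} (with $n=4$, $m=3$). Then I would reduce the case of odd composite $t$ to the case of odd prime $t$ by induction. Specifically, if $t = pq$ with $p$ an odd prime dividing $t$ and $q \ge 3$ odd, then $K_{4[t]}^* \cong K_{4[q]}^* \wr \bar{K}_p$, so applying Corollary~\ref{cor:blow-digraph}(b) (using that $p$ is odd) lifts a $\vec{C}_q$-factorization of $K_{4[q]}^*$ (available by the inductive hypothesis, since $q < t$ and $q$ is odd $\ge 3$) to a $\vec{C}_{qp} = \vec{C}_t$-factorization of $K_{4[t]}^*$. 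The only $t$ for which this reduction is unavailable is $t = p$ an odd prime with $p \ge 5$, which is the essential remaining case.

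For each odd prime $p \ge 5$, one would construct the three base $\vec{C}_p$-factors explicitly. A first instinct is to let each $F^{(i)}$ be the $\mathbb{Z}_4$-orbit of a single starter cycle under the rotation $(a,j) \mapsto (a+1,j)$, with the starter's vertex set being a transversal of the $p$ columns $\mathbb{Z}_4 \times \{j\}$; such a factor automatically has four vertex-disjoint cycles covering $V$. However, any $\mathbb{Z}_4$-invariant factor of this kind contains only arcs with nonzero second-coordinate difference, so the $12$ ``within-column'' arc-orbits $(a,b,0)$ with $a \ne b$ are missed. Consequently at least one of the three base factors must be designed without this $\mathbb{Z}_4$-symmetry, so that some of its cycles do use arcs between vertices sharing the same second coordinate. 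The main obstacle I anticipate is the explicit design of three compatible starters: the part-differences around each cycle must sum to $0 \pmod 4$, the second-coordinate differences must sum to $0 \pmod p$, the four cycles must be vertex-disjoint and cover $V$, and the $12p$ arc-types used collectively across $F^{(0)}, F^{(1)}, F^{(2)}$ must form a partition of all arc-types. I would expect this to succeed uniformly, either via a starter-adder construction in $\mathbb{Z}_{4p} \cong \mathbb{Z}_4 \times \mathbb{Z}_p$ exploiting $\gcd(4,p) = 1$, or through a small case analysis on $p \bmod 12$ combined with an ad hoc argument for the smallest primes.
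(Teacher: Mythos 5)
Your reduction of the problem is sound: disposing of $t=3$ via Theorem~\ref{thm:Ben} and reducing odd composite $t$ to odd prime $t$ via Corollary~\ref{cor:blow-digraph}(b) is exactly the kind of step the paper itself uses elsewhere (it makes this very reduction in its treatment of $K_{8[t]}^\ast$, though not in this lemma, where it instead handles all odd $t\ge 5$ directly). Your orbit bookkeeping is also correct: $12t^2$ arcs fall into $12t$ orbits of length $t$ under the $\ZZ_t$-translation, a factorization has $3t$ factors, and three base factors hitting each orbit exactly once would generate everything by translation. Your observation that a fully $\ZZ_4$-symmetric starter whose cycle is a column-transversal can never cover the twelve ``within-column'' orbits is a genuine and correct obstruction to the most naive design.

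However, the proposal stops exactly where the lemma's actual content begins. The entire difficulty of this statement is the explicit construction of the starter(s) for prime $t\ge 5$, and you do not produce one: the final paragraph is a list of constraints the three base factors must satisfy, followed by ``I would expect this to succeed uniformly.'' That is a conjecture, not a proof, and there is no argument that a compatible triple of starters exists for every prime $p\ge 5$ (nor even for a single one). For comparison, the paper resolves this by a different and arguably more economical symmetry: it places three of the four parts on $V=\{v_i: i\in\ZZ_{3t}\}$ as residue classes mod $3$, keeps the fourth part $X$ fixed pointwise, and uses the order-$3t$ rotation $\rho$ of $V$. Then a \emph{single} starter $\vec{C}_t$-factor suffices, built from four explicit vertex-disjoint directed paths in the circulant $D[V]$ that use each difference $d\not\equiv 0\pmod 3$ exactly once, extended to $t$-cycles through the vertices of $X$; the $3t$ rotates $\rho^i(R)$ give the factorization. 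Until you exhibit your three base factors (or a single starter in some group of order $3t$) with a verification of disjointness, coverage of $V$, and the exact-once orbit condition, the proof is incomplete at its essential step.
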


\begin{proof}
A $\vec{C}_t$-factorization of $K_{4[3]}^\ast $ exists by Theorem~\ref{thm:Ben}. Hence we may assume $t \ge 5$. We shall construct a $\vec{C}_t$-factorization of $K_{4[t]}^\ast$ as follows.

Let the vertex set of $D=K_{4[t]}^\ast $ be $V \cup X$, where $V$ and $X$ are disjoint sets, with $V=\{ v_i: i \in \ZZ_{3t}\}$ and $X=\{ x_i: i \in \ZZ_t\}$. The four parts (holes) of $D$ are $X$ and $V_r=\{ v_{3i+r}: i=0,1,\ldots,t-1 \}$, for $r=0,1,2$.
Note that $D[V]$ is a circulant digraph with connection set (set of differences) $\D=\{ d \in \ZZ_{3t}: d \not\equiv 0 \pmod{3}\}$. Define the permutation $\rho=( v_0 \, v_1 \, \ldots \, v_{3t-1})$ on $V \cup X$, which fixes the  vertices of $X$ pointwise.

Let $t=2k+1$. Hence the differences in $\D$ and the subscripts of the vertices in $V$ can be seen as elements of $\{ 0, \pm 1, \pm 2, \ldots, \pm (3k+1) \}$.

\begin{figure}[t]
\centerline{\includegraphics[scale=0.6]{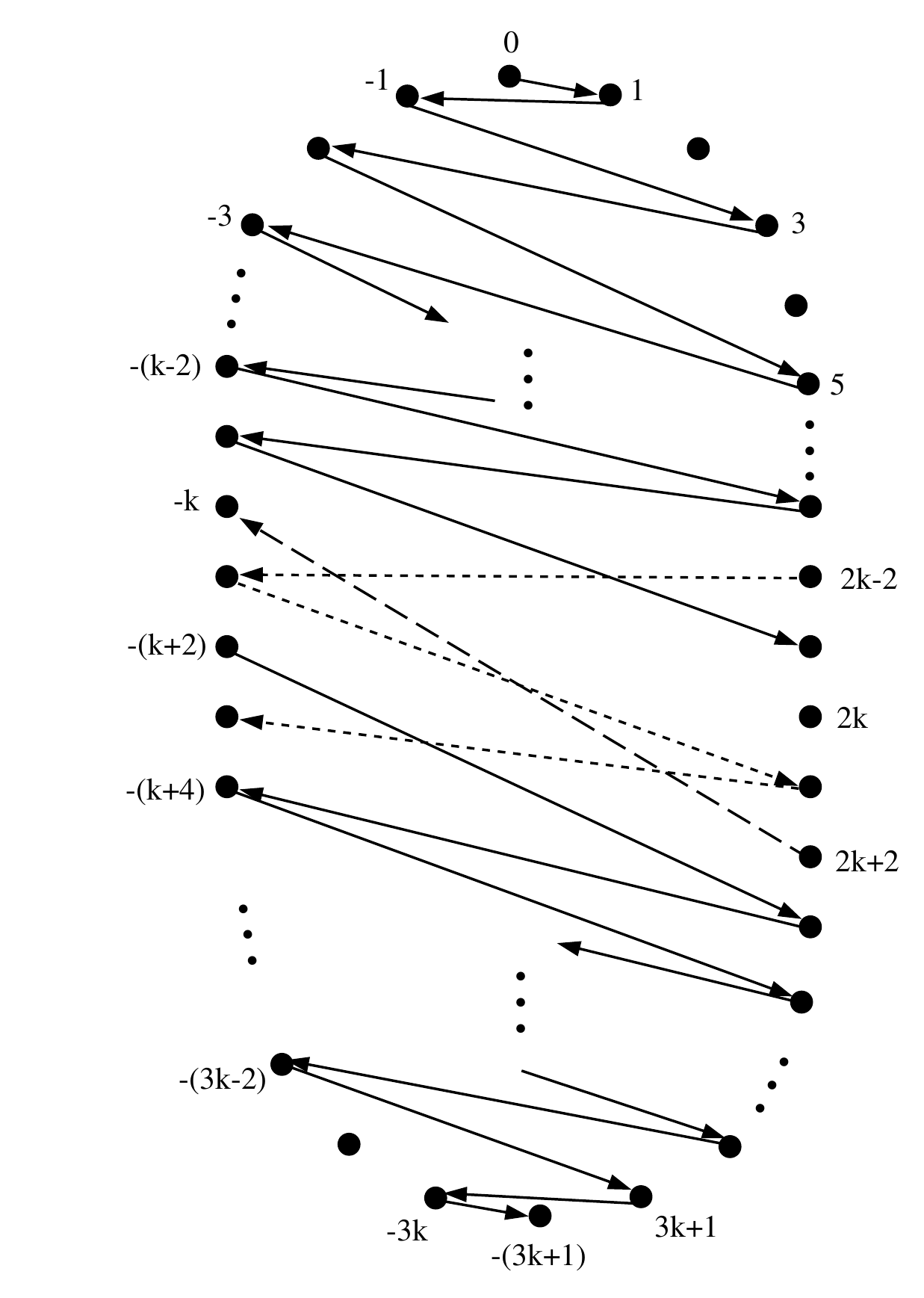}}
\caption{Directed paths $P_1,\ldots,P_4$ in the construction of a $\vec{C}_t$-factorization of $K_{4[t]}^\ast$. (All the vertices are in $V$, and only their subscripts are specified.) }
\label{fig:n=4}
\end{figure}

We define the following directed paths in $D[V]$ (see Figure~\ref{fig:n=4}):
$$P_1 = v_0 v_1 v_{-1} v_3 v_{-2} v_5 \ldots v_{2k-3} v_{-(k-1)} v_{2k-1},$$
and $P_2$ is obtained from $P_1$ by applying $\rho^{3k+2}$ (or $\rho^{-(3k+1)}$) and reversing the direction of the path. That is,
$$P_2= v_{-(k+2)} v_{2k+3} v_{-(k+4)} \ldots v_{-(3k-2)} v_{3k+1} v_{-3k} v_{-(3k+1)}.$$
Observe that $P_1$ and $P_2$ are disjoint, and jointly contain all vertices in $V$ except those in
\begin{eqnarray*}
V-(V(P_1) \cup V(P_2)) &=& \{ v_2,v_4,\ldots,v_{2k-2} \} \cup \{ v_{2k}, v_{2k+1},v_{2k+2} \} \\
&& \cup \{ v_{-(3k-1)},v_{-(3k-3)},\ldots,v_{-(k+3)} \} \cup \{ v_{-(k+1)},v_{-k} \}.
\end{eqnarray*}
The set of differences of the arcs in $P_1$, listing the differences in order of appearance, is
$$\D(P_1) = \{ 1,-2,4,-5,7,\ldots,3k-5,-(3k-4),3k-2 \},$$
and $\D(P_2) = - \D(P_1)$.

Furthermore, let
\begin{eqnarray*}
P_3 &=& v_{2k-2} v_{-(k+1)} v_{2k+1} v_{-(k+3)} \quad \mbox{ and} \\
P_4 &=& v_{2k+2} v_{-k}.
\end{eqnarray*}
Thus
\begin{eqnarray*}
\D(P_3) &=& \{  -(3k-1), -(3k+1),3k-1\} \quad \mbox{ and} \\
\D(P_4) &=& \{ 3k+1 \}.
\end{eqnarray*}
Observe that directed paths $P_1,\ldots,P_4$ are pairwise disjoint, and jointly contain exactly one arc of each difference in $\D$.

Let $U=V- \bigcup_{i=1}^4 V(P_i)$. It is easy to verify that $|U|=(6k+3)-(2k+2k+4+2)=2k-3$, so we may set $U=\{ u_0,\ldots,u_{2k-4} \}$.
Finally, we extend the four paths to four pairwise disjoint directed $t$-cycles as follows:
\begin{eqnarray*}
C_1 &=& P_1 v_{2k-1} x_0 v_0, \\
C_2 &=& P_2 v_{-(3k+1)} x_1  v_{-(k+2)}, \\
C_3 &=& P_3 v_{-(k+3)} x_2 u_0 x_3 u_1 \ldots u_{k-3} x_{k} v_{2k-2},  \quad \mbox{ and}\\
C_4 &=& P_4 v_{-k} x_{k+1} u_{k-2} x_{k+2} u_{k-1} \ldots  u_{2k-4} x_{2k} v_{2k+2} .
\end{eqnarray*}

Let $R= C_1 \cup C_2 \cup C_3 \cup C_4$, so $R$ is a $\vec{C}_t$-factor in $D$. It is not difficult to verify that
$\{ \rho^i(R): i \in \ZZ_{3t} \}$ is a  $\vec{C}_t$-factorization of $D$.
\end{proof}

\FloatBarrier


\begin{lemma}\label{lem:8}
Let $t$ be odd, $t \ge 3$. Then $K_{8[t]}^\ast$ admits a $\vec{C}_t$-factorization.
\end{lemma}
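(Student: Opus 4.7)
The plan is to emulate the proof of Lemma~\ref{lem:4}. For $t=3$ a $\vec{C}_3$-factorization of $K_{8[3]}^\ast$ is already supplied by Theorem~\ref{thm:Ben}, so we may assume $t=2k+1\ge 5$. Set the vertex set of $D=K_{8[t]}^\ast$ to be $V\cup X$, where $V=\{v_i:i\in\ZZ_{7t}\}$ and $X=\{x_i:i\in\ZZ_t\}$ are disjoint, and take the eight parts of $D$ to be $X$ together with $V_r=\{v_{7i+r}:i\in\ZZ_t\}$ for $r=0,1,\ldots,6$. Then $D[V]$ is the circulant on $\ZZ_{7t}$ with connection set $\D=\{d\in\ZZ_{7t}:d\not\equiv 0\pmod{7}\}$ of cardinality $6t$, while $X$ is an independent set. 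Let $\rho=(v_0\,v_1\,\ldots\,v_{7t-1})$, a permutation of order $7t$ that fixes $X$ pointwise.

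It suffices to exhibit a starter $\vec{C}_t$-factor $R$ of $D$ satisfying: (a) the arcs of $R$ with both endpoints in $V$ realize each element of $\D$ exactly once as a difference; and (b) every $x\in X$ lies on exactly one directed $t$-cycle of $R$, with both of its cycle-neighbours in $V$. The same orbit-counting argument used in the proof of Lemma~\ref{lem:4} then shows that $\{\rho^i(R):i\in\ZZ_{7t}\}$ is a $\vec{C}_t$-factorization of $D$: (a) guarantees that each arc of $D[V]$ is covered exactly once as $i$ ranges over $\ZZ_{7t}$, while (b) does the same for the arcs between $V$ and $X$. This yields the required $7t=m(n-1)$ factors.

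To build $R$, I would partition $\D$ into the arc sets of several arc-disjoint directed paths in $D[V]$, and then close each path into a directed $t$-cycle by inserting vertices of $X$ where needed. Following the template of Lemma~\ref{lem:4}, a natural choice is to take two ``long'' paths $P_1$ and $P_2=-P_1$ that consume the bulk of $\D$ (for example the differences in selected residue classes modulo $7$), together with a bounded number of ``short'' paths absorbing the leftover differences. The resulting eight directed $t$-cycles must be pairwise vertex-disjoint and partition $V\cup X$; since no two consecutive vertices on a cycle can both lie in $X$, each cycle contains at most $k$ vertices of $X$, so $\sum x_i=t=2k+1$ forces at least three of the eight cycles to be ``mixed''.

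The main obstacle is realizing this construction explicitly: the eight cycles must simultaneously (i) use each difference in $\D$ exactly once, (ii) partition $V\cup X$ vertex-wise, and (iii) close up properly, with the differences along each cycle summing to the appropriate value in $\ZZ_{7t}$. Relative to the $n=4$ case of Lemma~\ref{lem:4}, there are twice as many residue classes of differences to account for and twice as many $t$-cycles to produce, so I would expect the construction to require a more delicate bookkeeping—possibly a small case split for the smallest values of $t$—accompanied by an explicit figure in the spirit of Figure~\ref{fig:n=4}, listing the paths, their differences, and the points at which the $x_i$ vertices are inserted.
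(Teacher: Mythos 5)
Your framework is exactly the one the paper uses: the same vertex labelling $V\cup X$ with $V=\{v_i:i\in\ZZ_{7t}\}$ and $X=\{x_i:i\in\ZZ_t\}$, the same circulant structure on $D[V]$ with connection set $\D=\{d\in\ZZ_{7t}:d\not\equiv 0\pmod 7\}$, the same rotation $\rho$ fixing $X$ pointwise, and the correct pair of conditions (each difference in $\D$ used exactly once; each $x\in X$ on exactly one cycle with both neighbours in $V$) that make a single starter factor $R$ generate a factorization via $\{\rho^i(R):i\in\ZZ_{7t}\}$. Your observation that at least three of the eight cycles must be mixed is also correct.

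However, what you have written is a plan, not a proof: the entire mathematical content of this lemma is the explicit exhibition of a starter factor satisfying your conditions (a) and (b), and you stop precisely at the point where that construction would have to begin, conceding that it is ``the main obstacle.'' There is no argument showing that a suitable partition of $\D$ into path difference-sequences exists for general odd $t$, and no reason given why the $n=4$ template transfers; in the paper this step occupies several pages, split into the cases $t=5$, $t=7$, $t=6k+5$ and $t=6k+1$, with the last two further split according to $k\bmod 4$ because the short ``patch'' paths absorbing the leftover differences $\D'$ must be chosen differently in each residue class. You also omit the preliminary reduction, via Corollary~\ref{cor:blow-digraph}(b), to the case where $t$ is prime; without it you would additionally have to handle all odd composite $t$, including $t\equiv 3\pmod 6$ with $t>3$, which the paper's case analysis deliberately avoids. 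As it stands the proposal identifies the right strategy but does not establish the statement.
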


\begin{proof}
By Corollary~\ref{cor:blow-digraph}(b), we may assume that $t$ is a prime, and hence $t \equiv 1$ or $5 \pmod{6}$, and by Theorem~\ref{thm:Ben}, we may assume $t \ge 5$.

Let the vertex set of $D=K_{8[t]}^\ast$ be $V \cup X$, where $V$ and $X$ are disjoint sets, with $V=\{ v_i: i \in \ZZ_{7t}\}$ and $X=\{ x_i: i \in \ZZ_t\}$. The eight parts (holes) of $D$ are $X$ and $V_r=\{ v_{7i+r}: i=0,1,\ldots,t-1 \}$, for $r=0,1,\ldots,6$.
Note that $D[V]$ is a circulant digraph with connection set (set of differences) $\D=\{ d \in \ZZ_{7t}: d \not\equiv 0 \pmod{7}\}$.
Define the permutation $\rho=( v_0 \, v_1 \, \ldots \, v_{7t-1})$, which fixes the  vertices of $X$ pointwise.

\medskip

\begin{figure}[t]
\centerline{\includegraphics[scale=0.6]{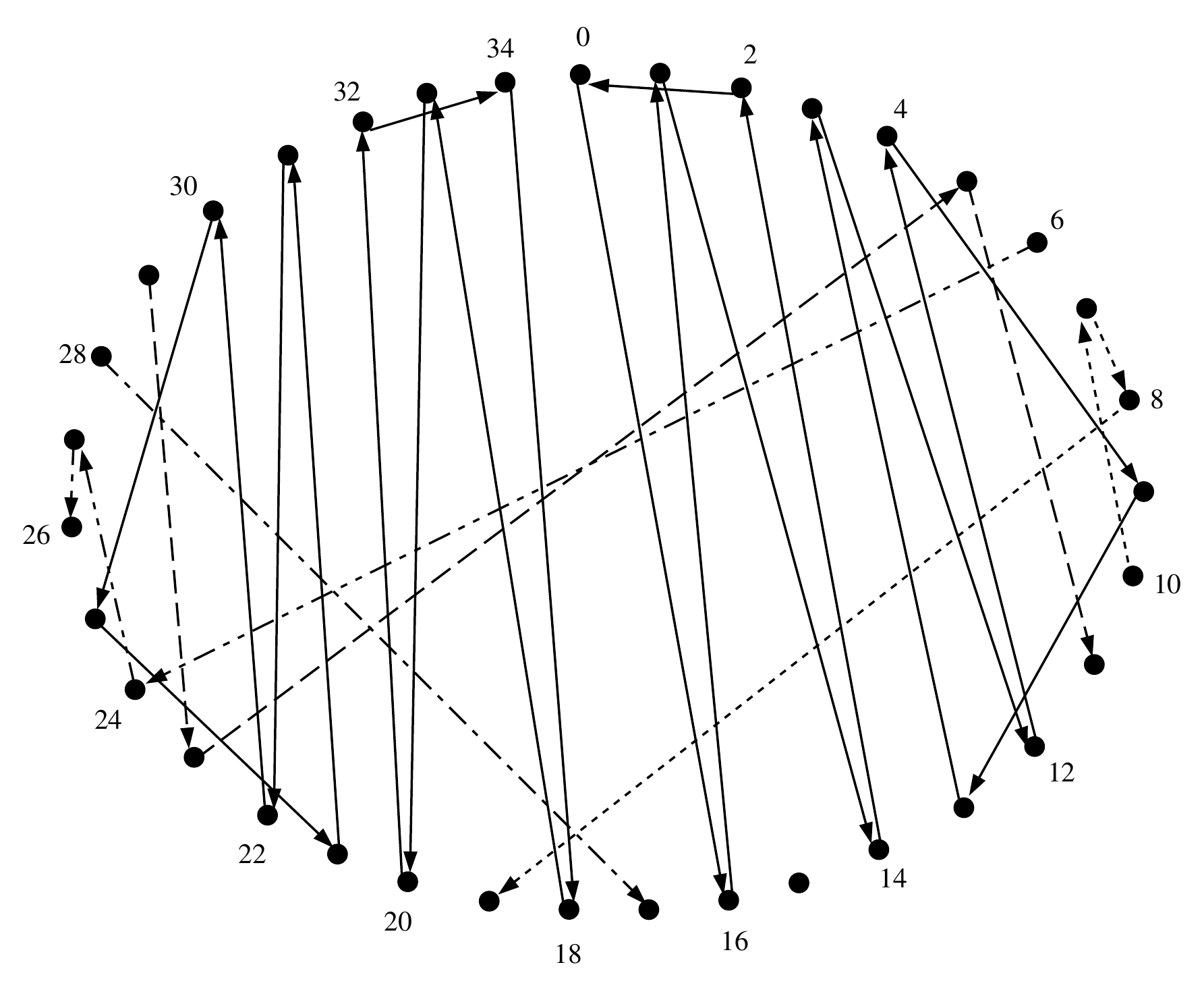}}
\caption{Directed cycles $C_1,\ldots,C_4$ (solid lines) and directed paths $P_1,\ldots,P_4$ (dashed lines) in the construction of a $\vec{C}_5$-factorization of $K_{8[5]}^\ast$. (All the vertices are in $V$, and only their subscripts are specified.)}
\label{fig:n=8(5)}
\end{figure}

{\sc Case 1:} $t=5$. Then $D[V]$ is a circulant digraph with vertex set $V=\{ v_i: i \in \ZZ_{35}\}$ and connection set $\D=\{ \pm d: 1 \le d \le 17, d \not\equiv 0 \pmod{7}\}$.

First, define the following two directed $5$-cycles (see Figure~\ref{fig:n=8(5)}):
\begin{eqnarray*}
C_1 &=& v_0 v_{16} v_1 v_{14} v_2 v_0 \quad \mbox{ and } \\
C_2 &=& v_{13} v_{3} v_{12} v_{4} v_9 v_{13}.
\end{eqnarray*}
The next two directed $5$-cycles are obtained by applying the reflection $\tau: v_i \mapsto v_{-(i+1)}$ to cycles $C_1$ and $C_2$:
\begin{eqnarray*}
C_3 &=& v_{34} v_{18} v_{33} v_{20} v_{32} v_{34} \quad \mbox{ and }  \\
C_4 &=& v_{21} v_{31} v_{22} v_{30} v_{25} v_{21}.
\end{eqnarray*}
Next, we define three directed 3-paths and one directed 1-path:
\begin{eqnarray*}
P_1 &=& v_{6} v_{24} v_{27} v_{26}, \\
P_2 &=& v_{29} v_{23} v_{5} v_{11}, \\
P_3 &=& v_{10} v_{7} v_{8} v_{19}, \quad \mbox{ and }  \\
P_4 &=& v_{28} v_{17}.
\end{eqnarray*}
Observe that these cycles and paths are pairwise disjoint, and $U=V- \bigcup_{i=1}^4 \big( V(P_i) \cup V(C_i)\big)=\{ v_{15} \}$. Their sets of differences are:
\begin{eqnarray*}
\D(C_1) &=& \{ 16,-15,13,-12,-2\}, \\
\D(C_2) &=& \{ -10,9,-8,5,4 \}, \\
\D(C_3) &=& -\D(C_1), \\
\D(C_4) &=& -\D(C_2), \\
\D(P_1) &=& \{ -17,3,-1 \}, \\
\D(P_2) &=& \{ -6,17,6 \}, \\
\D(P_3) &=& \{ -3,1,11 \}, \quad \mbox{ and } \\
\D(P_4) &=& \{ -11 \}.
\end{eqnarray*}
Thus, these paths and cycles jointly use exactly one arc of each difference in $\D$. We next extend the paths to directed 5-cycles as follows:
\begin{eqnarray*}
C_5 &=& P_1 v_{26} x_0 v_{6}, \\
C_6 &=& P_2 v_{11} x_1 v_{29}, \\
C_7 &=& P_3 v_{19} x_2 v_{10}, \quad \mbox{ and }  \\
C_8 &=& P_4 v_{17} x_3 v_{15} x_4 v_{28}.
\end{eqnarray*}
Let $R= C_1 \cup \ldots \cup C_8$, so $R$ is a $\vec{C}_5$-factor in $D$. It is not difficult to verify that
$\{ \rho^i(R): i \in \ZZ_{35} \}$ is a  $\vec{C}_5$-factorization of $D$.

\FloatBarrier
\medskip

{\sc Case 2:} $t=7$. Now $D[V]$ is a circulant digraph with vertex set $V=\{ v_i: i \in \ZZ_{49}\}$ and connection set $\D=\{ \pm d: 1 \le d \le 24, d \not\equiv 0 \pmod{7}\}$.

First, define the following two directed $7$-cycles:
\begin{eqnarray*}
C_1 &=& v_0 v_{23} v_1 v_{21} v_2 v_{20} v_3 v_0 \quad \mbox{ and }  \\
C_2 &=& v_{19} v_{4} v_{17} v_{5} v_{16} v_{6} v_{15} v_{19}.
\end{eqnarray*}
The next two directed $7$-cycles are obtained by applying the reflection $\tau: v_i \mapsto v_{-(i+1)}$ to cycles $C_1$ and $C_2$:
\begin{eqnarray*}
C_3 &=& v_{48} v_{25} v_{47} v_{27} v_{46} v_{28} v_{45} v_{48} \quad \mbox{ and }  \\
C_4 &=& v_{29} v_{44} v_{31} v_{43} v_{32} v_{42} v_{33} v_{29}.
\end{eqnarray*}
The fifth 7-cycle is
$$C_5 = v_{9} v_{10} v_{26} v_{34} v_{40} v_{35} v_{11} v_{9}.$$
Next, we define one directed 5-path and two directed 1-paths:
\begin{eqnarray*}
P_1 &=& v_{39} v_{38} v_{30} v_{14} v_{8} v_{13}, \\
P_2 &=& v_{37} v_{12},  \quad \mbox{ and } \\
P_3 &=& v_{22} v_{24}.
\end{eqnarray*}
Observe that these cycles and paths are pairwise disjoint, and $U=V- \big( \bigcup_{i=1}^5 V(C_i) \cup \big(\bigcup_{i=1}^3 V(P_i) \big)=\{ v_7,v_{18},v_{36},v_{41} \}$.
Their sets of differences are:
\begin{eqnarray*}
\D(C_1) &=& \{ 23,-22,20,-19,18,-17,-3 \}, \\
\D(C_2) &=& \{ -15,13,-12,11,-10,9,4 \}, \\
\D(C_3) &=& -\D(C_1), \\
\D(C_4) &=& -\D(C_2), \\
\D(C_5) &=& \{ 1,16,8,6,-5,-24,-2 \}, \\
\D(P_1) &=& \{ -1,-8,-16,-6,5 \}, \\
\D(P_2) &=& \{ 24  \}, \quad \mbox{ and } \\
\D(P_3) &=& \{ 2 \}.
\end{eqnarray*}
Thus, these paths and cycles jointly use exactly one arc of each difference in $\D$. We extend paths $P_1,P_2,P_3$ to directed 7-cycles as follows:
\begin{eqnarray*}
C_6 &=& P_1 v_{13}  x_0 v_{39}, \\
C_7 &=& P_2 v_{12} x_1 v_7 x_2 v_{18} x_3 v_{37}, \quad \mbox{ and } \\
C_8 &=& P_3 v_{24} x_4 v_{36} x_5 v_{41} x_6 v_{22}.
\end{eqnarray*}

Let $R= C_1 \cup \ldots \cup C_8$, so $R$ is a $\vec{C}_7$-factor in $D$. It is not difficult to verify that
$\{ \rho^i(R): i \in \ZZ_{49} \}$ is a  $\vec{C}_7$-factorization of $D$.

\medskip

{\sc Case 3:} $t=6k+5$ for an integer $k\ge 1$. Now $D[V]$ is a circulant digraph with vertex set $V=\{ v_i: i \in \ZZ_{42k+35}\}$ and connection set $\D=\{ \pm d: 1 \le d \le 21k+17, d \not\equiv 0 \pmod{7}\}$.

\medskip

\begin{figure}[t]
\centerline{\includegraphics[scale=0.6]{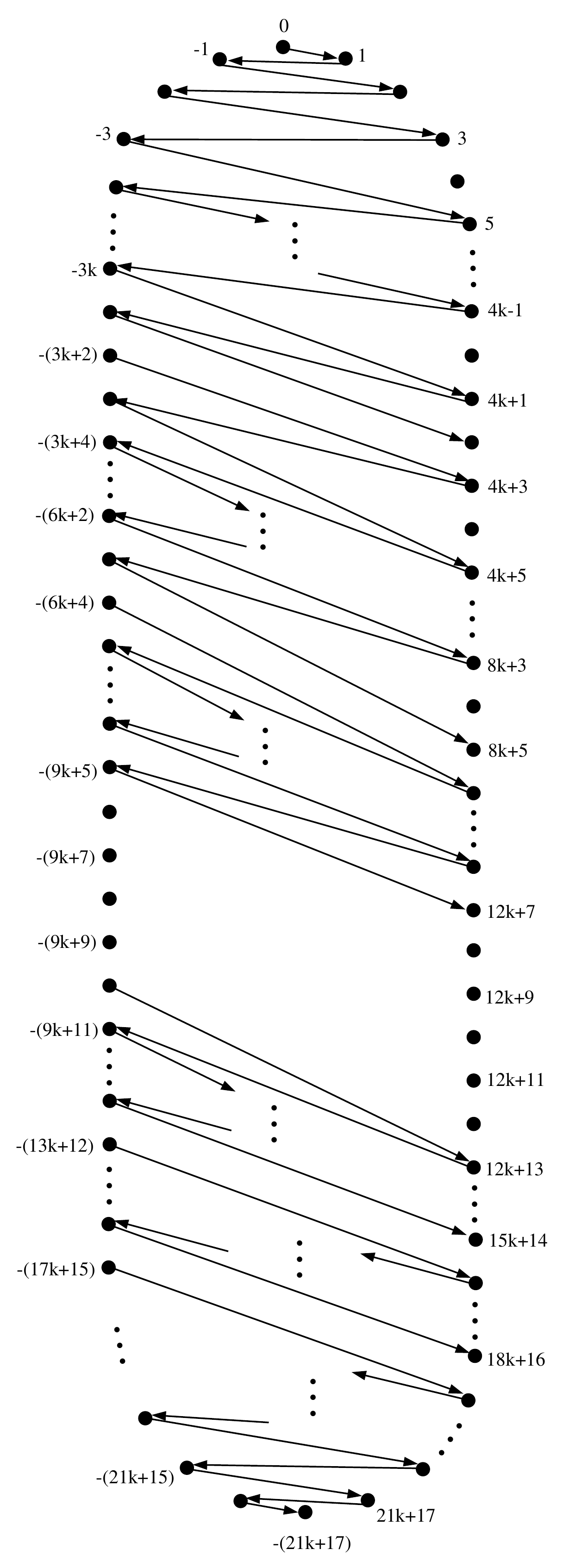}}
\caption{Directed paths $P_1,\ldots,P_6$ in the construction of a $\vec{C}_t$-factorization of $K_{8[t]}^\ast$, case $t=6k+5$, $k \equiv 1$ or $2 \pmod{4}$. (All the vertices are in $V$, and only their subscripts are specified.) }
\label{fig:Fig3}
\end{figure}

{\sc Subcase 3.1:} $k \equiv 1$ or $2 \pmod{4}$.

Define the following three directed $(6k+3)$-paths (see Figure~\ref{fig:Fig3}):
\begin{eqnarray*}
P_1 &=& v_0 v_1 v_{-1} v_2 v_{-2} v_3 v_{-3} v_5 v_{-4} \ldots v_{4k-2} v_{-(3k-1)} v_{4k-1} v_{-3k} v_{4k+1} v_{-(3k+1)} v_{4k+2}, \\
P_2 &=& v_{-(3k+2)} v_{4k+3} v_{-(3k+3)} v_{4k+5} v_{-(3k+4)} v_{4k+6} \ldots v_{-(6k+2)} v_{8k+3} v_{-(6k+3)} v_{8k+5}, \quad \mbox{ and } \\
P_3 &=& v_{-(6k+4)} v_{8k+6} v_{-(6k+5)} v_{8k+7} v_{-(6k+6)} v_{8k+9} \ldots v_{-(9k+4)} v_{12k+6} v_{-(9k+5)} v_{12k+7}.
\end{eqnarray*}
For $i=1,2,3$, let $P_{i+3}$ be the directed $(6k+3)$-path obtained from $P_i$ by applying $\rho^{21k+18}=\rho^{-(21k+17)}$ and changing the direction. Thus,
\begin{eqnarray*}
P_4 &=& v_{-(17k+15)} v_{18k+17}  \ldots  v_{-(21k+16)} v_{-(21k+17)}, \\
P_5 &=& v_{-(13k+12)} v_{15k+15}  \ldots  v_{-(17k+14)} v_{18k+16}, \quad \mbox{ and }  \\
P_6 &=& v_{-(9k+10)} v_{12k+13}  \ldots   v_{-(13k+11)} v_{15k+14} .
\end{eqnarray*}
Observe that these paths are pairwise disjoint, and use all vertices in $V$ except those in
\begin{eqnarray*}
V- \bigcup_{i=1}^6 V(P_i) &=& \{v_4, v_8, v_{12}, \ldots, v_{12k+4} \} \cup \{ v_{12k+8},v_{12k+9},\ldots,v_{12k+12} \} \\
&& \cup \{ v_{-(21k+13)}, v_{-(21k+9)},v_{-(21k+5)}, \ldots, v_{-(9k+13)} \} \\
&&  \cup \{v_{-(9k+9)},v_{-(9k+8)},v_{-(9k+7)},v_{-(9k+6)} \}.
\end{eqnarray*}

The sets of differences of these paths, listing the differences in their order of appearance, are:
\begin{eqnarray*}
\D(P_1) &=& \{ 1,-2,3,-4,5,-6,8,-9, \ldots, -(7k-1),7k+1,-(7k+2),7k+3 \}, \\
\D(P_2) &=& \{ 7k+5,-(7k+6),7k+8,-(7k+9),\ldots, 14k+5,-(14k+6),14k+8 \}, \\
\D(P_3) &=& \{ 14k+10, -(14k+11),14k+12,-(14k+13),14k+15,\ldots \\ && \ldots, 21k+10,-(21k+11),21k+12\}, \\
\D(P_4) &=& -\D(P_1), \\
\D(P_5) &=& -\D(P_2), \quad \mbox{ and }  \\
\D(P_6) &=& -\D(P_3).
\end{eqnarray*}
Thus, these paths jointly use exactly one arc of each difference in $\D-\D'$, where
$$\D'=\{\pm(7k+4),\pm(14k+9),\pm(21k+13),\pm(21k+15),\pm(21k+16),\pm(21k+17)\}.$$
The remaining two directed paths depend on the congruency class of $k$ modulo 4.

\FloatBarrier

\begin{figure}[h]
\centerline{\includegraphics[scale=0.6]{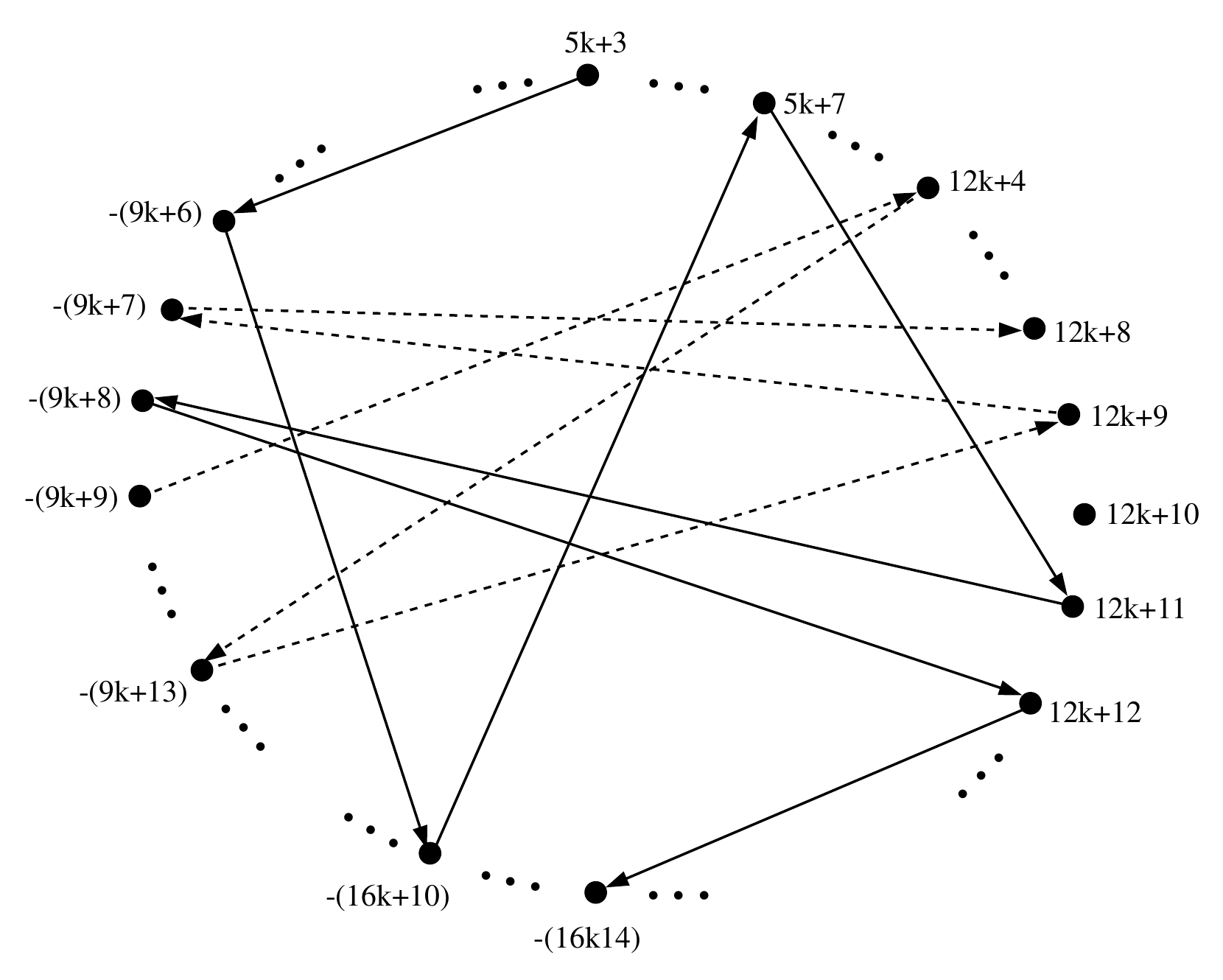}}
\caption{Directed paths $P_7$ and $P_8$ in the construction of a $\vec{C}_t$-factorization of $K_{8[t]}^\ast$, case $t=6k+5$, $k \equiv 1 \pmod{4}$. (All the vertices are in $V$, and only their subscripts are specified.) }
\label{fig:Fig4}
\end{figure}


If $k\equiv 1 \pmod 4$, we let
\begin{eqnarray*}
P_7 &=& v_{5k+3}, v_{-(9k+6)}, v_{-(16k+10)}, v_{5k+7}, v_{12k+11}, v_{-(9k+8)}, v_{12k+12}, v_{-(16k+14)} \quad \mbox{ and }   \\
P_8 &=&  v_{-(9k+9)}, v_{12k+4}, v_{-(9k+13)}, v_{12k+9}, v_{-(9k+7)}, v_{12k+8}.
\end{eqnarray*}
See Figure~\ref{fig:Fig4}. The sets of differences of these paths are
\begin{eqnarray*}
\D(P_7) &=& \{ -(14k+9),-(7k+4),21k+17,7k+4,21k+16,-(21k+15),14k+9 \} \quad \mbox{ and }   \\
\D(P_8) &=&  \{ 21k+13,-(21k+17),-(21k+13),-(21k+16),21k+15\}.
\end{eqnarray*}

\FloatBarrier

If $k\equiv 2 \pmod 4$, we let
\begin{eqnarray*}
P_7 &=& v_{5k-2}, v_{-(16k+15)}, v_{12k+11}, v_{-(9k+8)}, v_{12k+12}, v_{-(9k+6)},v_{12k+9},v_{-(9k+13)} \quad \mbox{ and }  \\
P_8 &=& v_{-(9k+9)}, v_{12k+10},v_{5k+6},v_{-(16k+11)},v_{-(9k+7)},v_{5k+2}.
\end{eqnarray*}
In this case, we have
\begin{eqnarray*}
\D(P_7) &=& \{ -(21k+13),-(14k+9), 21k+16,-(21k+15),21k+17,21k+15,21k+13 \} \quad \mbox{ and }   \\
\D(P_8) &=&  \{ -(21k+16),-(7k+4),-(21k+17),7k+4, 14k+9 \}.
\end{eqnarray*}
In either case, paths $P_1,\ldots,P_8$ are pairwise disjoint, and jointly contain exactly one arc of each difference in $\D$. Moreover, the set of unused vertices $U$ has cardinality $|U|= (42k+35)-6(6k+4)-8-6=6k-3$. Hence we may label $U=\{ u_i: i \in \ZZ_{6k-3} \}$.

Finally, we extend the eight directed paths to directed $(6k+5)$-cycles as follows. It will be convenient to denote the source and terminal vertex of directed path $P_i$ by $s_i$ and $t_i$, respectively. Let
$$C_i= P_i t_i x_{i-1} s_i \quad \mbox{ for } i=1,2,\ldots,6,$$
while
\begin{eqnarray*}
C_7 &=& P_7 t_7 x_6 u_0 x_7 u_1 \ldots u_{3k-3} x_{3k+4} s_7 \quad \mbox{ and } \\
C_8 &=& P_8 t_8 x_{3k+5} u_{3k-2}  x_{3k+6} u_{3k-1}  \ldots u_{6k-4} x_{6k+4} s_8.
\end{eqnarray*}
To conclude, let $R= C_1 \cup \ldots \cup C_8$, so $R$ is a $\vec{C}_{t}$-factor in $D$. Since the permutation  $\rho$ fixes the  vertices of $X$ pointwise, it is not difficult to verify that
$\{ \rho^i(R): i \in \ZZ_{7t} \}$ is a  $\vec{C}_{t}$-factorization of $D$.

\medskip

{\sc Subcase 3.2:} $k \equiv 0$ or $3 \pmod{4}$. This case will be solved similarly to Subcase 3.1, so we only highlight the differences.

Define the following three directed $(6k+3)$-paths:
\begin{eqnarray*}
P_1 &=& v_0 v_{-1} v_1 v_{-2} v_3 v_{-3} v_5 v_{-4} v_6 v_{-5} v_7 v_{-6} v_9 v_{-7} v_{10} \ldots v_{4k-1} v_{-3k} v_{4k+1} v_{-(3k+1)} v_{4k+2} v_{-(3k+2)}, \\
P_2 &=& v_{4k+3} v_{-(3k+3)} v_{4k+5} v_{-(3k+4)} v_{4k+6} \ldots v_{-(6k+2)} v_{8k+3}  v_{-(6k+3)} v_{8k+5}  v_{-(6k+4)}, \quad \mbox{ and } \\
P_3 &=& v_{8k+6} v_{-(6k+5)} v_{8k+7} v_{-(6k+6)} v_{8k+9} \ldots v_{12k+3} v_{-(9k+3)} v_{12k+5} v_{-(9k+4)} v_{12k+6} v_{-(9k+5)} v_{12k+7} v_{-(9k+6)}.
\end{eqnarray*}
For $i=1,2,3$, let $P_{i+3}$ be the directed $(6k+3)$-path obtained from $P_i$ by applying $\rho^{21k+18}=\rho^{-(21k+17)}$ and changing the direction. Thus,
\begin{eqnarray*}
P_4 &=& v_{18k+16}  v_{-(17k+15)} \ldots  v_{21k+17} v_{-(21k+17)}, \\
P_5 &=& v_{15k+14} v_{-(13k+12)} \ldots  v_{18k+15} v_{-(17k+14)}, \quad \mbox{ and } \\
P_6 &=& v_{12k+12}  v_{-(9k+10)} \ldots  v_{15k+13} v_{-(13k+11)} .
\end{eqnarray*}
Observe that these paths are pairwise disjoint, and use all vertices in $V$ except those in
\begin{eqnarray*}
V- \bigcup_{i=1}^6 V(P_i) &=& \{v_2,v_4, v_8, v_{12}, \ldots, v_{12k+4} \} \cup \{ v_{12k+8},v_{12k+9},\ldots,v_{12k+11} \} \\
&& \cup \{ v_{-(21k+15)},v_{-(21k+13)}, v_{-(21k+9)},v_{-(21k+5)}, \ldots, v_{-(9k+13)} \} \\
&&  \cup \{v_{-(9k+9)},v_{-(9k+8)},v_{-(9k+7)} \}.
\end{eqnarray*}

The sets of differences of these paths, listing the differences in their order of appearance, are:
\begin{eqnarray*}
\D(P_1) &=& \{ -1,2,-3,5,-6,8,-9, \ldots, -(7k-1),7k+1,-(7k+2),7k+3,-(7k+4) \}, \\
\D(P_2) &=& \{ -(7k+6),7k+8,-(7k+9),\ldots, 14k+5,-(14k+6),14k+8,-(14k+9) \}, \\
\D(P_3) &=& \{ -(14k+11),14k+12,-(14k+13),14k+15,\ldots \\ && \ldots, 21k+10,-(21k+11),21k+12,-(21k+13)\}, \\
\D(P_4) &=& -\D(P_1), \\
\D(P_5) &=& -\D(P_2), \quad \mbox{ and } \\
\D(P_6) &=& -\D(P_3).
\end{eqnarray*}
Thus, these paths  jointly use exactly one arc of each difference in $\D-\D'$, where
$$\D'=\{\pm 4, \pm(7k+5), \pm(14k+10), \pm(21k+15), \pm(21k+16), \pm(21k+17)\}.$$
The remaining two directed paths depend on the congruency class of $k$ modulo 4.

If $k\equiv 3 \pmod 4$, we let
\begin{eqnarray*}
P_7 &=& v_{-(9k+9)} v_{12k+10} v_{-(9k+8)} v_{12k+8} v_{-(9k+7)} v_{-(16k+12)} v_{-(16k+16)}  v_{12k+9} \quad \mbox{ and }  \\
P_8 &=& v_{-(21k+13)} v_2 v_{7k+7} v_{-(14k+10)} v_{-(14k+6)} v_4.
\end{eqnarray*}
The sets of differences of these paths are
\begin{eqnarray*}
\D(P_7) &=& \{ -(21k+16),21k+17,21k+16,-(21k+15),-(7k+5),-4,-(14k+10) \} \quad \mbox{ and }   \\
\D(P_8) &=&  \{ 21k+15,7k+5,-(21k+17),4,14k+10 \}.
\end{eqnarray*}

If $k\equiv 0 \pmod 4$, we let
\begin{eqnarray*}
P_7 &=& v_2 v_{-(21k+13)} v_{7k+12} v_{7k+16} v_{-(21k+9)} v_8 v_4 v_{-(21k+15)} \quad \mbox{ and }  \\
P_8 &=& v_{-(16k+13)} v_{-(9k+8)} v_{12k+11} v_{-(9k+9)} v_{12k+9} v_{5k+4}.
\end{eqnarray*}
In this case, we have
\begin{eqnarray*}
\D(P_7) &=& \{ -(21k+15),-(14k+10),4,14k+10,21k+17,-4,21k+16  \} \quad \mbox{ and }   \\
\D(P_8) &=&  \{ 7k+5,-(21k+16),21k+15,-(21k+17),-(7k+5)  \}.
\end{eqnarray*}
The construction is then completed precisely as in Subcase 3.1.

\medskip

{\sc Case 4:} $t=6k+1$ for an integer $k\ge 2$. This case is similar to Subcase 3.2, so we only highlight the differences.
Now $D[V]$ is a circulant digraph with vertex set $V=\{ v_i: i \in \ZZ_{42k+7}\}$ and connection set $\D=\{ \pm d: 1 \le d \le 21k+3, d \not\equiv 0 \pmod{7}\}$.

Define the following three directed $(6k-1)$-paths:
\begin{eqnarray*}
P_1 &=& v_0 v_{-1} v_1 v_{-2} v_3 v_{-3} v_5 v_{-4} v_6 v_{-5} v_7 v_{-6} v_9 v_{-7} v_{10} \ldots v_{4k-2} v_{-(3k-1)} v_{4k-1} v_{-3k}, \\
P_2 &=& v_{4k+1} v_{-(3k+1)} v_{4k+2} v_{-(3k+2)} v_{4k+3} \ldots v_{-(6k-2)} v_{8k-2}  v_{-(6k-1)} v_{8k-1}  v_{-6k}, \quad \mbox{ and } \\
P_3 &=& v_{-(6k+1)} v_{8k} v_{-(6k+2)} v_{8k+1} v_{-(6k+3)} v_{8k+2} v_{-(6k+4)} v_{8k+4} v_{-(6k+5)} v_{8k+5}  \ldots \\
&& \ldots v_{-(9k-3)} v_{12k-6} v_{-(9k-2)} v_{12k-4} v_{-(9k-1)} v_{12k-3}
v_{-9k} v_{12k-2}.
\end{eqnarray*}
For $i=1,2,3$, let $P_{i+3}$ be the directed $(6k-1)$-path obtained from $P_i$ by applying $\rho^{21k+4}=\rho^{-(21k+3)}$ and changing the direction. Thus,
\begin{eqnarray*}
P_4 &=& v_{18k+4}  v_{-(17k+4)} \ldots  v_{21k+3} v_{-(21k+3)}, \\
P_5 &=& v_{15k+4} v_{-(13k+4)} \ldots  v_{18k+3}  v_{-(17k+2)}, \quad \mbox{ and } \\
P_6 &=& v_{-(9k+5)} v_{12k+4}  \ldots  v_{-(13k+3)} v_{15k+3} .
\end{eqnarray*}
Observe that these six paths are pairwise disjoint, and use all vertices in $V$ except those in
\begin{eqnarray*}
V- \bigcup_{i=1}^6 V(P_i) &=& \{v_2,v_4, v_8, v_{12}, \ldots, v_{8k-4} \} \cup \{ v_{8k+3},v_{8k+7},\ldots,v_{12k-5} \} \\
&& \cup \{ v_{12k-1},v_{12k},\ldots,v_{12k+3} \} \\
&& \cup \{ v_{-(21k+1)},v_{-(21k-1)}, v_{-(21k-5)},v_{-(21k-9)}, \ldots, v_{-(13k+7)} \} \\
&& \cup \{ v_{-13k}, v_{-(13k-4)}, \ldots, v_{-(9k+8)} \} \\
&&  \cup \{ v_{-(9k+4)}, v_{-(9k+3)}, v_{-(9k+2)}, v_{-(9k+1)}  \}.
\end{eqnarray*}

The sets of differences of these paths, listing the differences in their order of appearance, are:
\begin{eqnarray*}
\D(P_1) &=& \{ -1,2,-3,5,-6,8,-9, \ldots, ,7k-4,-(7k-3),7k-2,-(7k-1) \}, \\
\D(P_2) &=& \{ -(7k+2),7k+3,-(7k+4),\ldots, 14k-4,-(14k-3),14k-2,-(14k-1) \}, \\
\D(P_3) &=& \{ 14k+1,-(14k+2),14k+3, \ldots, 21k-4,-(21k-3),21k-2\}, \\
\D(P_4) &=& -\D(P_1), \\
\D(P_5) &=& -\D(P_2), \quad \mbox{ and } \\
\D(P_6) &=& -\D(P_3).
\end{eqnarray*}
Thus, these paths jointly use exactly one arc of each difference in $\D-\D'$, where
$$\D'=\{\pm 4, \pm(7k+1), \pm(21k-1), \pm(21k+1), \pm(21k+2), \pm(21k+3)\}.$$
The remaining two directed paths depend on the congruency class of $k$ modulo 4.

If $k\equiv 0 \pmod 4$, we let
\begin{eqnarray*}
P_7 &=& v_{12k-1} v_{-(9k+3)} v_{12k+1} v_{5k} v_{5k-4} v_{-(16k+3)} v_{-(9k+2)} v_{12k} v_{-(9k+1)} v_{12k+2} v_{-(9k+4)} v_{12k-5} \quad \mbox{ and } \\
P_8 &=& v_4 v_8.
\end{eqnarray*}
The sets of differences of these paths are
\begin{eqnarray*}
\D(P_7) &=& \{  -(21k+2),-(21k+3),-(7k+1),-4,-(21k-1),7k+1,21k+2,-(21k+1),\\
&& \;\; 21k+3,21k+1,21k-1 \} \quad \mbox{ and }  \\
\D(P_8) &=&  \{ 4 \}.
\end{eqnarray*}

If $k\equiv 1 \pmod 4$, we let
\begin{eqnarray*}
P_7 &=& v_{12k-1} v_{-(9k+2)} v_{12k+3} v_{-(9k+1)}  v_{12k} v_{5k-1} v_{5k-5} v_{-(16k+4)}  v_{-(9k+3)} v_{12k+1} v_{-(9k+4)} v_{12k-5} \quad \mbox{ and } \\
P_8 &=& v_4 v_8.
\end{eqnarray*}
The sets of differences of these paths are
\begin{eqnarray*}
\D(P_7) &=& \{  -(21k+1), -(21k+2),21k+3,21k+1,-(7k+1),-4,-(21k-1),7k+1, \\
&& \;\; -(21k+3),21k+2,21k-1 \} \quad \mbox{ and }  \\
\D(P_8) &=&  \{ 4 \}.
\end{eqnarray*}

If $k\equiv 2 \pmod 4$, we let
\begin{eqnarray*}
P_7 &=& v_{-(9k+4)} v_{12k+2} v_{-(9k+3)} v_{12k+1} v_{-(9k+1)} v_{12k} v_{-(9k+8)} v_{12k-5} v_{12k-1} v_{5k-2} v_{5k-6} v_{-(16k+5)} \quad \mbox{ and } \\
P_8 &=& v_{5k+2} v_{12k+3}.
\end{eqnarray*}
The sets of differences of these paths are
\begin{eqnarray*}
\D(P_7) &=& \{  -(21k+1), 21k+2,-(21k+3), -(21k+2),21k+1,21k-1, 21k+3,4, \\
&& \;\;  -(7k+1),-4,-(21k-1)\} \quad \mbox{ and }  \\
\D(P_8) &=&  \{ 7k+1 \}.
\end{eqnarray*}

If $k\equiv 3 \pmod 4$, we let
\begin{eqnarray*}
P_7 &=& v_{12k-1} v_{-(9k+3)} v_{12k+3} v_{-(9k+2)}  v_{12k+2} v_{5k+1} v_{5k-3} v_{-(16k+2)}  v_{-(9k+1)} v_{12k} v_{-(9k+4)} v_{12k-5} \quad \mbox{ and } \\
P_8 &=& v_4 v_8.
\end{eqnarray*}
The sets of differences of these paths are
\begin{eqnarray*}
\D(P_7) &=& \{  -(21k+2), -(21k+1), 21k+2, -(21k+3), -(7k+1), -4, -(21k-1), 7k+1, \\
&& \;\; 21k+1, 21k+3, 21k-1 \} \quad \mbox{ and }  \\
\D(P_8) &=&  \{ 4 \}.
\end{eqnarray*}

The construction is then completed similarly to Subcases 3.1 and 3.2, except that $3k-5$ vertices of $X$ and $3k-6$ vertices of $U=V- \cup_{i=1}^6 V(P_i)$ are used to complete $P_7$ to $C_7$, while
$3k$ vertices of $X$ and $3k-1$ vertices of $U$ are used to complete $P_8$ to $C_8$. Observe that, indeed, $|U|=(42k+7)-6(6k)-12-2=6k-7=(3k-6)+(3k-1)$.
\end{proof}

\begin{cor}\label{cor:n=0mod4}
Assume $m \ge 3$ is odd, $n \equiv 0 \pmod{4}$, $t|mn$, and $\gcd(n,t)=1$.
Then $K_{n[m]}^\ast$ admits a $\vec{C}_t$-factorization.
\end{cor}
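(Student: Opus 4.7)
The plan is to reduce the statement to the uniform-hole case $K_{n[t]}^\ast$ and then assemble the machinery already built in Sections~4 and~6. Since $\gcd(n,t)=1$ and $t\mid mn$, we immediately get $t\mid m$; because $m$ is odd and $t\ge 2$, this forces $t\ge 3$ and $t$ odd. Hence the hypotheses of Corollary~\ref{cor:reduction}(1) are now accessible: Lemmas~\ref{lem:4} and~\ref{lem:8} supply $\vec{C}_t$-factorizations of $K_{4[t]}^\ast$ and $K_{8[t]}^\ast$, respectively.

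Applying Corollary~\ref{cor:reduction}(1) with the hypothesis $n\equiv 0\pmod 4$ then yields a $\vec{C}_t$-factorization of $K_{n[t]}^\ast$. To promote this to a $\vec{C}_t$-factorization of $K_{n[m]}^\ast$, I would set $s=m/t$, a positive odd integer, and use the standard associativity identity for wreath products:
$$K_{n[m]}^\ast \;\cong\; K_n^\ast \wr \bar K_m \;\cong\; (K_n^\ast \wr \bar K_t) \wr \bar K_{s} \;\cong\; K_{n[t]}^\ast \wr \bar K_{s}.$$
If $s=1$ the previous step already gives the desired factorization; if $s\ge 3$, Corollary~\ref{cor:blow-digraph}(a) applied with $D=K_{n[t]}^\ast$ transports the $\vec{C}_t$-factorization across the blow-up to $K_{n[m]}^\ast$.

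There is no real obstacle in this corollary: the substantive work has already been carried out in Lemmas~\ref{lem:4} and~\ref{lem:8} (the $K_{4[t]}^\ast$ and $K_{8[t]}^\ast$ constructions) and in Corollaries~\ref{cor:blow-digraph} and~\ref{cor:reduction}, so the proof amounts to checking divisibility and parity, then invoking the three results in sequence.
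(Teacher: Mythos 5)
Your proposal is correct and follows essentially the same route as the paper: deduce that $t$ is odd with $t\mid m$, invoke Lemmas~\ref{lem:4} and~\ref{lem:8} to feed Corollary~\ref{cor:reduction}(1), and then blow up the holes by $\bar K_{m/t}$ via Corollary~\ref{cor:blow-digraph}(a). Your explicit separation of the case $s=1$ (where Corollary~\ref{cor:blow-digraph}(a) requires $s\ge 3$) is a small point of care the paper leaves implicit, but it is not a different argument.
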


\begin{proof}
The assumptions imply that $m=st$ for some odd $s$. By Lemmas~\ref{lem:4} and \ref{lem:8}, respectively, the digraphs $K_{4[t]}^\ast$ and $K_{8[t]}^\ast$ admit $\vec{C}_t$-factorizations. Hence by Corollaries~\ref{cor:reduction}(1) and \ref{cor:blow-digraph}(a), the digraphs $K_{n[t]}^\ast$ and $K_{n[m]}^\ast \cong K_{n[t]}^\ast \wr \bar{K_s}$, respectively,  admit $\vec{C}_t$-factorizations.
\end{proof}


\subsection{Subcase $n \equiv 0 \pmod{6}$}

This section covers the smallest of the cases $n=2p$, for $p$ an odd prime. The construction is similar to the case $n=8$. In principle, this approach could be taken to construct a $\vec{C}_t$-factorizaton of $K_{2p[t]}^\ast$ for any fixed prime $p$, however, for $p \ge 5$, the work involved becomes too tedious.

\begin{lemma}\label{lem:6}
Let $t$ be odd, $t \ge 3$. Then $K_{6[t]}^\ast$ admits a $\vec{C}_t$-factorizaton.
\end{lemma}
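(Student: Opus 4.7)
My plan follows the pattern of Lemma~\ref{lem:8}, adapted from the ``$7t$'' circulant structure on the non-$X$ vertices to a ``$5t$'' one. The first step is a reduction to $t$ prime: if $t=pt'$ with $p$ an odd prime and $3\le t'<t$, then $K_{6[t]}^\ast\cong K_{6[t']}^\ast\wr\bar K_p$, so any $\vec C_{t'}$-factorization of $K_{6[t']}^\ast$ is promoted to a $\vec C_t$-factorization of $K_{6[t]}^\ast$ by Corollary~\ref{cor:blow-digraph}(b). Inducting on the number of odd prime factors of $t$, with base case $t=3$ covered by Theorem~\ref{thm:Ben} (since for $(m,n)=(3,6)$ the prime $3\le 17$ divides $m$), we reduce to the case of $t$ an odd prime, so from here on we may assume $t\ge 5$ is prime.

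Next I would set up the difference-method framework exactly as in Lemma~\ref{lem:8}. Label $V(D)=V\cup X$ with $V=\{v_i:i\in\ZZ_{5t}\}$ and $X=\{x_i:i\in\ZZ_t\}$, take the six holes to be $X$ and $V_r=\{v_{5i+r}:0\le i<t\}$ for $r=0,1,2,3,4$, and let $\rho=(v_0\,v_1\,\ldots\,v_{5t-1})$ fix $X$ pointwise. Then $D[V]$ is the circulant on $\ZZ_{5t}$ with connection set $\D=\{d\in\ZZ_{5t}:5\nmid d\}$, of size $|\D|=4t$. A starter $\vec C_t$-factor $R$ of $D$ that uses exactly one arc of each difference in $\D$ (and therefore exactly $2t$ further arcs incident with $X$) will immediately yield a $\vec C_t$-factorization $\{\rho^i(R):i\in\ZZ_{5t}\}$ by the standard orbit argument: each difference in $\D$ generates a $\rho$-orbit of length $5t$ on the arcs of $D[V]$, and each $v$-to-$X$ (or $X$-to-$v$) arc likewise lies in a single $\rho$-orbit of length $5t$.

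The bulk of the work is the construction of $R$. Following the template of Lemma~\ref{lem:8}, I would build a short family of pairwise disjoint directed paths $P_1,\ldots,P_k$ inside $D[V]$ whose arcs collectively realize each difference in $\D$ exactly once, and then close each $P_i$ into a $\vec C_t$-cycle by appending vertices of $X$, interlaced with any leftover vertices of $V-\bigcup_i V(P_i)$, along a return walk from terminus to source. The reflection $\tau:v_i\mapsto v_{-(i+1)}$ produces roughly half of the paths as $\tau$-images of the others, so that positive and negative differences are handled simultaneously; since $\tau$ preserves each hole $V_r$ setwise (it permutes the residue classes modulo $5$), the $\tau$-images still lie in $D[V]$.

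The principal obstacle is the explicit combinatorial bookkeeping required for $t\ge 5$. I expect ad hoc starters for the smallest primes $t=5$ and $t=7$ (where there is little room for a generic pattern), followed by a generic construction for larger $t$, split by the residue of $t$ modulo a small integer (the natural candidates being $t\equiv 1$ and $t\equiv 5\pmod 6$, possibly refined modulo $4$), with short ``correction'' paths introduced to soak up the handful of differences near $\pm\lfloor 5t/2\rfloor$ that do not fit the main families. Verifying that the chosen paths are mutually vertex-disjoint, that their difference multisets partition $\D$, that the number of unused vertices of $V$ matches what is needed for the $t$-cycle extensions, and that each $x_i$ is used exactly once across the $6$ cycles of $R$, is mechanical but lengthy and is the main source of the proof's length.
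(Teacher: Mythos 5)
Your plan coincides with the paper's proof in every structural respect: the reduction to prime $t$ via Corollary~\ref{cor:blow-digraph}(b) with base case $t=3$ from Theorem~\ref{thm:Ben}, the labelling $V=\{v_i: i\in\ZZ_{5t}\}$, $X=\{x_i:i\in\ZZ_t\}$ with $D[V]$ a circulant of connection set $\D=\{d\in\ZZ_{5t}: 5\nmid d\}$, the starter-and-orbit argument under $\rho$, the use of a reflection/rotation-with-reversal to obtain half the paths with negated differences, the ad hoc starter for $t=5$, and a generic construction for larger primes split by residue (the paper uses $t=4k+1$ versus $t=4k+3$, refined by $k\bmod 3$) with short correction paths absorbing the differences near $\pm\frac{5t-1}{2}$. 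All of that is right, and the orbit-counting justification you give for why a starter with one arc of each difference generates a factorization is sound.

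However, what you have written is a description of the proof's architecture, not the proof: the entire mathematical content of this lemma is the explicit exhibition of the starter paths, and you supply none of them. Their existence is not a routine consequence of the framework --- the paper needs carefully tuned families $P_1,\ldots,P_4$ leaving a precisely controlled residual difference set $\D'$ of size $8$, correction paths $P_5,P_6$ whose form changes with $k\bmod 3$, a count $|U|=4k-5$ (resp.\ $4k-3$) of leftover vertices that must match the slack in the two long cycle-extensions, and, in the case $t=4k+3$, the primality of $t$ to exclude $k\equiv 0\pmod 3$ entirely. Until such starters are produced and verified, the lemma is not established. One smaller point: your claim that $\tau:v_i\mapsto v_{-(i+1)}$ ``preserves each hole $V_r$ setwise'' is false --- it sends residue $r$ to residue $-(r+1)\bmod 5$, so it permutes the holes ($0\leftrightarrow 4$, $1\leftrightarrow 3$, fixing $2$) rather than fixing each one; what actually matters, and what is true, is that $\tau$ maps $D[V]$ to itself because it negates differences and $\D=-\D$.
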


\begin{proof}
By Corollary~\ref{cor:blow-digraph}(b), we may assume that $t$ is a prime,
and by Theorem~\ref{thm:Ben}, we may assume $t \ge 5$.

Let the vertex set of $D=K_{6[t]}^\ast$ be $V \cup X$, where $V$ and $X$ are disjoint sets, with $V=\{ v_i: i \in \ZZ_{5t}\}$ and $X=\{ x_i: i \in \ZZ_t\}$. The six parts (holes) of $D$ are $X$ and $V_r=\{ v_{5i+r}: i=0,1,\ldots,t-1 \}$, for $r=0,1,\ldots,4$.
Note that $D[V]$ is a circulant digraph with connection set (set of differences) $\D=\{ d \in \ZZ_{5t}: d \not\equiv 0 \pmod{5}\}$. Define a permutation $\rho=( v_0 \, v_1 \, \ldots \, v_{5t-1})$, which fixes the  vertices of $X$ pointwise.

\medskip

{\sc Case 1:} $t=5$. Now $D[V]$ is a circulant digraph with vertex set $V=\{ v_i: i \in \ZZ_{25}\}$ and connection set $\D=\{ \pm 1, \ldots,\pm 4, \pm 6,\ldots,\pm 9, \pm 11,\pm 12\}$.

First, define the following directed $5$-cycle and directed 3-path:
\begin{eqnarray*}
C_1 &=& v_{24} v_{11} v_2 v_{10} v_3 v_{24} \quad \mbox{ and } \\
P_2 &=& v_{6} v_{7} v_{5} v_{8}.
\end{eqnarray*}
The second directed $5$-cycle and directed 3-path are obtained by applying the reflection $\tau: v_i \mapsto v_{-(i+1)}$ to $C_1$ and $P_2$, respectively:
\begin{eqnarray*}
C_3 &=& v_{0} v_{13} v_{22} v_{14} v_{21} v_{0} \quad \mbox{ and } \\
P_4 &=& v_{18} v_{17} v_{19} v_{16}.
\end{eqnarray*}
Next, we define another directed 3-path and a  directed 1-path:
\begin{eqnarray*}
P_5 &=& v_{9} v_{15} v_{1} v_{20}  \quad \mbox{ and } \\
P_6 &=& v_{23} v_{12}.
\end{eqnarray*}
Observe that these cycles and paths are pairwise disjoint, and $U=V- \big( V(C_1) \cup V(P_2) \cup V(C_3) \cup V(P_4) \cup V(P_5)\cup V(P_6) \big)=\{ v_{4} \}$. Their sets of differences are, in order of appearance:
\begin{eqnarray*}
\D(C_1) &=& \{ 12,-9,8,-7,-4 \}, \\
\D(P_2) &=& \{ 1,-2,3 \}, \\
\D(C_3) &=& -\D(C_1), \\
\D(P_4) &=& -\D(P_2), \\
\D(P_5) &=& \{ 6,11,-6 \},  \quad \mbox{ and } \\
\D(P_6) &=& \{ -11 \}.
\end{eqnarray*}
Thus, these paths and cycles jointly use exactly one arc of each difference in $\D$. We next extend the three directed 3-paths $P_2,P_4,P_5$ to directed 5-cycles $C_2,C_4,C_5$ using a distinct vertex in $\{ x_0,x_1,x_2 \}$, and we extend the directed 1-path $P_6$ to a directed 5-cycle $C_6$ using vertices $x_3,v_4,x_4$.

Let $R= C_1 \cup \ldots \cup C_6$, so $R$ is a $\vec{C}_5$-factor in $D$.  Then
$\{ \rho^i(R): i \in \ZZ_{25} \}$ is a  $\vec{C}_5$-factorization of $D$.

\smallskip

\begin{figure}[t]
\centerline{\includegraphics[scale=0.6]{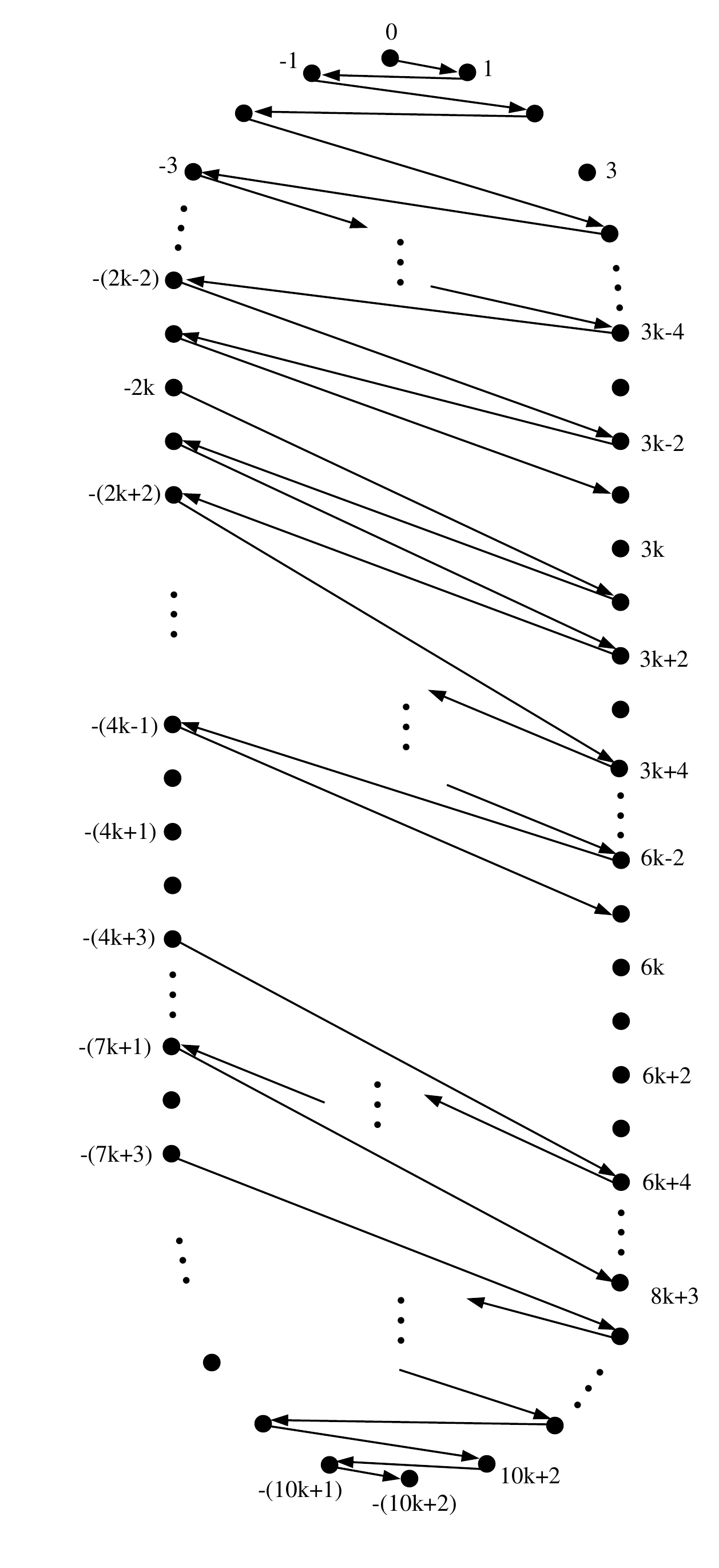}}
\caption{Directed paths $P_1,\ldots,P_4$ in the construction of a $\vec{C}_t$-factorization of $K_{6[t]}^\ast$, case $t=4k+1$. (All the vertices are in $V$, and only their subscripts are specified.) }
\label{fig:Fig5}
\end{figure}

{\sc Case 2:} $t=4k+1$ for an integer $k\ge 2$. Now $D[V]$ is a circulant digraph with vertex set $V=\{ v_i: i \in \ZZ_{20k+5}\}$ and connection set $\D=\{ \pm d: 1 \le d \le 10k+2, d \not\equiv 0 \pmod{5}\}$.

Define the following two directed $(4k-1)$-paths (see Figure~\ref{fig:Fig5}):
\begin{eqnarray*}
P_1 &=& v_0 v_1 v_{-1} v_2 v_{-2} v_4 v_{-3} v_5 v_{-4} v_7 \ldots  v_{-(2k-3)} v_{3k-4} v_{-(2k-2)} v_{3k-2} v_{-(2k-1)} v_{3k-1}  \quad \mbox{ and } \\
P_2 &=& v_{-2k} v_{3k+1} v_{-(2k+1)} v_{3k+2} v_{-(2k+2)} v_{3k+4} \ldots v_{-(4k-3)} v_{6k-4} v_{-(4k-2)} v_{6k-2} v_{-(4k-1)} v_{6k-1}.
\end{eqnarray*}
For $i=1,2$, let $P_{i+2}$ be the directed $(4k-1)$-path obtained from $P_i$ by applying $\rho^{10k+3}=\rho^{-(10k+2)}$ and changing the direction. Thus,
\begin{eqnarray*}
P_3 &=& v_{-(7k+3)} v_{8k+4}  \ldots v_{-(10k+1)} v_{-(10k+2)}  \quad \mbox{ and }\\
P_4 &=& v_{-(4k+3)} v_{6k+4} \ldots v_{-(7k+1)} v_{8k+3}.
\end{eqnarray*}
Observe that these paths are pairwise disjoint, and use all vertices in $V$ except those in
\begin{eqnarray*}
V- \bigcup_{i=1}^4 V(P_i) &=& \{v_3, v_6, v_{9}, \ldots, v_{6k-3} \} \cup \{ v_{6k},v_{6k+1},v_{6k+2},v_{6k+3} \} \\
&& \cup \{ v_{-(10k-1)}, v_{-(10k-4)},v_{-(10k-7)}, \ldots, v_{-(4k+5)} \} \\
&&  \cup \{v_{-(4k+2)},v_{-(4k+1)},v_{-4k} \}.
\end{eqnarray*}

\FloatBarrier

The sets of differences of these paths, listing the differences in their order of appearance, are:
\begin{eqnarray*}
\D(P_1) &=& \{ 1,-2,3,-4,6,-7,8,-9, \ldots, -(5k-6),5k-4,-(5k-3),5k-2 \}, \\
\D(P_2) &=& \{ 5k+1,-(5k+2),5k+3,-(5k+4),5k+6,\ldots, \\
&& \; \ldots, -(10k-6),10k-4,-(10k-3),10k-2 \}, \\
\D(P_3) &=& -\D(P_1),  \quad \mbox{ and }\\
\D(P_4) &=& -\D(P_2).
\end{eqnarray*}
Thus, these paths jointly use exactly one arc of each difference in $\D-\D'$, where
$$\D'=\{\pm(5k-1),\pm(10k-1),\pm(10k+1),\pm(10k+2))\}.$$
The remaining two directed paths depend on the congruency class of $k$ modulo 3.

\begin{figure}[h]
\centerline{\includegraphics[scale=0.6]{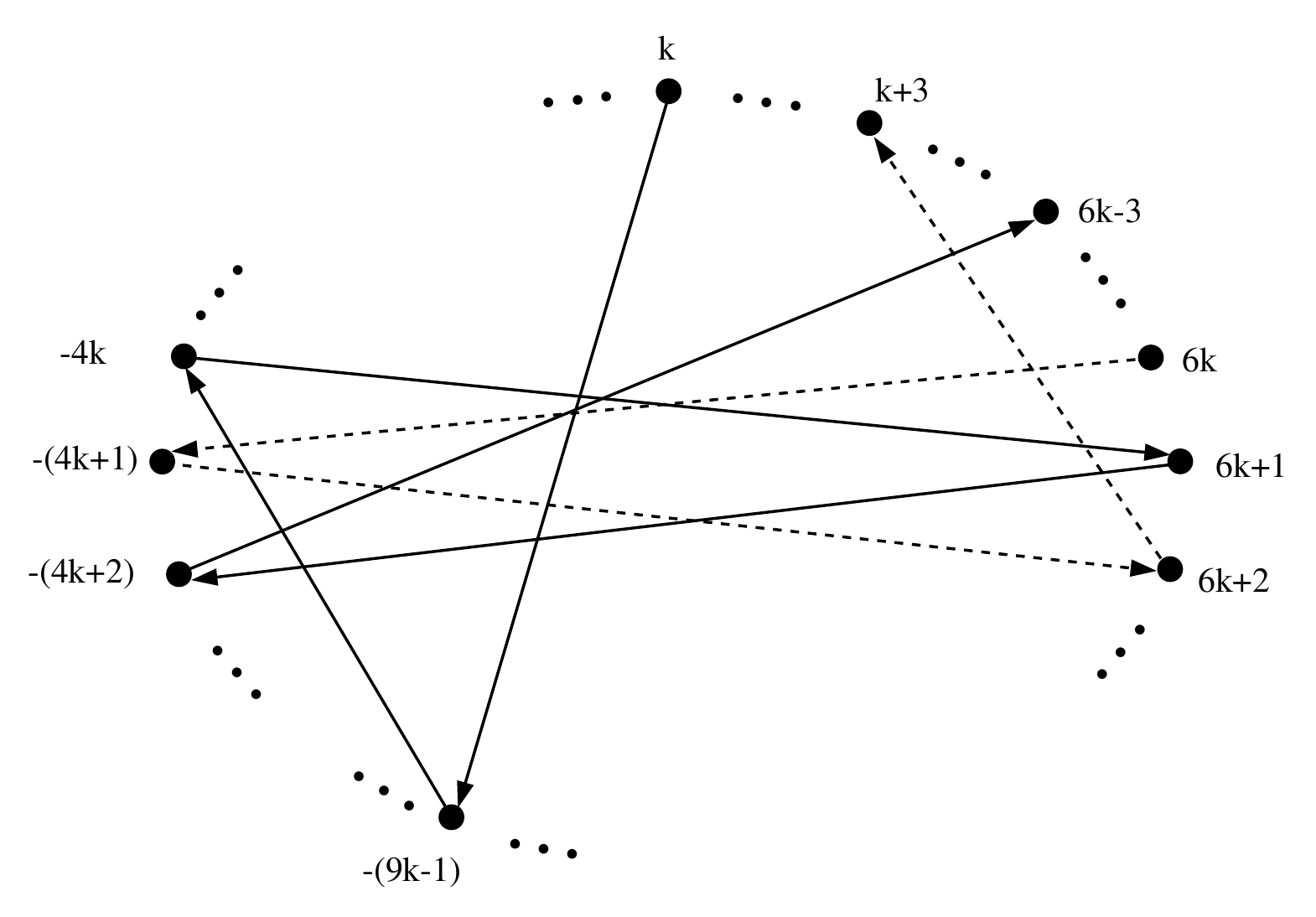}}
\caption{Directed paths $P_5$ and $P_6$ in the construction of a $\vec{C}_t$-factorization of $K_{6[t]}^\ast$, case $t=4k+1$, $k \equiv 0 \pmod{3}$. (All the vertices are in $V$, and only their subscripts are specified.) }
\label{fig:Fig6}
\end{figure}

\FloatBarrier

If $k\equiv 0 \pmod 3$, we let
\begin{eqnarray*}
P_5 &=& v_{k} v_{-(9k-1)} v_{-4k} v_{6k+1} v_{-(4k+2)} v_{6k-3}  \quad \mbox{ and }  \\
P_6 &=& v_{6k} v_{-(4k+1)} v_{6k+2} v_{k+3} .
\end{eqnarray*}
See Figure~\ref{fig:Fig6}. The sets of differences of these paths are
\begin{eqnarray*}
\D(P_5) &=& \{ -(10k-1),5k-1,10k+1,10k+2,10k-1 \}  \quad \mbox{ and }\\
\D(P_6) &=&  \{ -(10k+1),-(10k+2),-(5k-1)\}.
\end{eqnarray*}

If $k\equiv 1 \pmod 3$, we let
\begin{eqnarray*}
P_5 &=& v_{6k+2} v_{-(4k+2)} v_{6k-3} v_{-(4k+5)} v_{6k+1} v_{k+2}  \quad \mbox{ and }\\
P_6 &=& v_{-(4k+1)} v_{6k+3} v_{-4k} v_{k-1}.
\end{eqnarray*}
In this case, we have
\begin{eqnarray*}
\D(P_5) &=& \{ 10k+1, 10k-1, -(10k+2),-(10k-1),-(5k-1) \}  \quad \mbox{ and }\\
\D(P_6) &=&  \{ -(10k+1),10k+2,5k-1\}.
\end{eqnarray*}

If $k\equiv 2 \pmod 3$, we take
\begin{eqnarray*}
P_5 &=& v_{-(4k+2)} v_{6k+2} v_{-(4k+1)} v_{6k} v_{k+1} v_{-(9k-2)}  \quad \mbox{ and } \\
P_6 &=& v_{k+7} v_{-(9k-5)} v_{k+4} v_{6k+3}.
\end{eqnarray*}
The sets of differences are
\begin{eqnarray*}
\D(P_5) &=& \{ -(10k+1), 10k+2, 10k+1,-(5k-1),-(10k-1) \}  \quad \mbox{ and }\\
\D(P_6) &=&  \{ -(10k+2),10k-1,5k-1\}.
\end{eqnarray*}

In all three cases, paths $P_1,\ldots,P_6$ are pairwise disjoint, and jointly contain exactly one arc of each difference in $\D$. Moreover, the set of unused vertices $U$ has cardinality $|U|= (20k+5)-4 \cdot 4k-6-4=4k-5$. Hence we may label $U=\{ u_i: i \in \ZZ_{4k-5} \}$.

Finally, we extend the four directed paths $P_1,\ldots,P_4$ to disjoint directed $(4k+1)$-cycles by adjoining one vertex from $\{ x_0, \ldots, x_3 \}$ to each, extend the directed 5-path $P_5$ to a directed  $(4k+1)$-cycle $C_5$ by adjoining vertices $x_4,u_0,x_5,u_1,\ldots, u_{2k-4}, x_{2k+1}$, and extend the directed 3-path $P_6$ to a directed  $(4k+1)$-cycle $C_6$ by adjoining vertices $x_{2k+2},u_{2k-3},x_{2k+3},u_{2k-2},\ldots,$ $ u_{4k-6}, x_{4k}$.

Finally, let $R= C_1 \cup \ldots \cup C_6$, so $R$ is a $\vec{C}_{t}$-factor in $D$. Since the permutation  $\rho$ fixes the  vertices of $X$ pointwise, it is not difficult to verify that
$\{ \rho^i(R): i \in \ZZ_{5t} \}$ is a  $\vec{C}_{t}$-factorization of $D$.

\smallskip

{\sc Case 3:} $t=4k+3$ for an integer $k\ge 1$. Now $D[V]$ is a circulant digraph with vertex set $V=\{ v_i: i \in \ZZ_{20k+15}\}$ and connection set $\D=\{ \pm d: 1 \le d \le 10k+7, d \not\equiv 0 \pmod{5}\}$.

Define the following two directed $(4k+1)$-paths:
\begin{eqnarray*}
P_1 &=& v_0 v_1 v_{-1} v_2 v_{-2} v_4 v_{-3} v_5 v_{-4} v_7 \ldots  v_{-(2k-2)} v_{3k-2} v_{-(2k-1)} v_{3k-1} v_{-2k} v_{3k+1}  \quad \mbox{ and } \\
P_2 &=& v_{-(2k+1)} v_{3k+2} v_{-(2k+2)} v_{3k+4} \ldots  v_{-(4k-1)} v_{6k-1} v_{-4k} v_{6k+1}v_{-(4k+1)} v_{6k+2}.
\end{eqnarray*}
For $i=1,2$, let $P_{i+2}$ be the directed $(4k+1)$-path obtained from $P_i$ by applying $\rho^{10k+8}=\rho^{-(10k+7)}$ and changing the direction. Thus,
\begin{eqnarray*}
P_3 &=& v_{-(7k+6)} v_{8k+8} \ldots v_{-(10k+6)} v_{-(10k+7)}  \quad \mbox{ and } \\
P_4 &=& v_{-(4k+5)} v_{6k+7} \ldots v_{-(7k+5)} v_{8k+7}.
\end{eqnarray*}
Observe that these paths are pairwise disjoint, and use all vertices in $V$ except those in
\begin{eqnarray*}
V- \bigcup_{i=1}^4 V(P_i) &=& \{v_3, v_6, v_{9}, \ldots, v_{6k} \} \cup \{ v_{6k+3},v_{6k+4},v_{6k+5},v_{6k+6} \} \\
&& \cup \{ v_{-(10k+4)}, v_{-(10k+1)},v_{-(10k-2)}, \ldots, v_{-(4k+7)} \} \\
&&  \cup \{v_{-(4k+4)},v_{-(4k+3)},v_{-(4k+2)} \}.
\end{eqnarray*}

The sets of differences of these paths, listing the differences in their order of appearance, are:
\begin{eqnarray*}
\D(P_1) &=& \{ 1,-2,3,-4,6,-7,8,-9, \ldots, -(5k-3),5k-2,-(5k-1),5k+1 \}, \\
\D(P_2) &=& \{ 5k+3,-(5k+4),5k+6,\ldots,  \ldots, -(10k-1),10k+1,-(10k+2),10k+3 \}, \\
\D(P_3) &=& -\D(P_1),  \quad \mbox{ and }\\
\D(P_4) &=& -\D(P_2).
\end{eqnarray*}
Thus, these paths jointly use exactly one arc of each difference in $\D-\D'$, where
$$\D'=\{\pm(5k+2),\pm(10k+4),\pm(10k+6),\pm(10k+7)\}.$$
The remaining two directed paths depend on the congruency class of $k$ modulo 3. Since $t=4k+3$ is prime, we may assume $k \not\equiv 0 \pmod{3}$.

If $k\equiv 1 \pmod 3$, we let
\begin{eqnarray*}
P_5 &=& v_{6k+6} v_{-(4k+3)} v_{-(9k+5)} v_{k+2} v_{6k+4} v_{-(4k+7)}  \quad \mbox{ and }\\
P_6 &=&  v_{6k} v_{-(4k+4)} v_{6k+5} v_{-(4k+2)}.
\end{eqnarray*}
The sets of differences of these paths are
\begin{eqnarray*}
\D(P_5) &=& \{ 10k+6,-(5k+2),10k+7,5k+2,10k+4 \}  \quad \mbox{ and }\\
\D(P_6) &=&  \{ -(10k+4),-(10k+6),-(10k+7) \}.
\end{eqnarray*}

If $k\equiv 2 \pmod 3$, we take
\begin{eqnarray*}
P_5 &=& v_{-(4k+4)} v_{6k+5} v_{-(4k+3)} v_{6k+3} v_{k+1} v_{-(9k+3)}  \quad \mbox{ and } \\
P_6 &=& v_{k+7} v_{-9k} v_{k+4} v_{6k+6}.
\end{eqnarray*}
The sets of differences are
\begin{eqnarray*}
\D(P_5) &=& \{ -(10k+6),10k+7,10k+6,-(5k+2),-(10k+4) \}  \quad \mbox{ and }\\
\D(P_6) &=&  \{ -(10k+7),10k+4,5k+2 \}.
\end{eqnarray*}

In both cases, paths $P_1,\ldots,P_6$ are pairwise disjoint, and jointly contain exactly one arc of each difference in $\D$. Moreover, the set of unused vertices $U$ has cardinality $|U|= (20k+15)-4 \cdot (4k+2)-6-4=4k-3$. Hence we may label $U=\{ u_i: i \in \ZZ_{4k-3} \}$.

Finally, we extend the four directed paths $P_1,\ldots,P_4$ to disjoint directed $(4k+3)$-cycles by adjoining one vertex from $\{ x_0, \ldots, x_3 \}$ to each, extend the directed 5-path $P_5$ to a directed  $(4k+3)$-cycle $C_5$ by adjoining vertices $x_4,u_0,x_5,u_1,\ldots, u_{2k-3}, x_{2k+2}$, and extend the directed 3-path $P_6$ to a directed  $(4k+3)$-cycle $C_6$ by adjoining vertices $x_{2k+3},u_{2k-2},x_{2k+4},u_{2k-1},\ldots,$ $ u_{4k-4}, x_{4k+2}$.

The construction is then completed as in Case 2.
\end{proof}

\begin{cor}\label{cor:n=0mod6}
Assume $m \ge 3$ is odd, $n \equiv 0 \pmod{6}$, $t|mn$, and $\gcd(t,n)=1$.
Then $K_{n[m]}^\ast$ admits a $\vec{C}_t$-factorization.
\end{cor}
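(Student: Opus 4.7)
The plan is to mirror the proof of Corollary~\ref{cor:n=0mod4}, with Lemma~\ref{lem:6} playing the role of Lemmas~\ref{lem:4} and \ref{lem:8}, and Corollary~\ref{cor:reduction}(2) replacing Corollary~\ref{cor:reduction}(1). First I extract the arithmetic content of the hypotheses: $n$ even together with $\gcd(n,t)=1$ forces $t$ to be odd, and $t\mid mn$ with $\gcd(n,t)=1$ then gives $t\mid m$; since $m$ is odd, write $m=ts'$ with $s'\ge 1$ odd.

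The one wrinkle, compared to the $n\equiv 0\pmod{4}$ case, is that Corollary~\ref{cor:reduction}(2) with the prime $p=3$ only upgrades a $\vec{C}_t$-factorization of $K_{6[t]}^\ast$ to one of $K_{n[t]}^\ast$ when $n=6s$ with $s$ odd. I therefore split on the $4$-divisibility of $n$. If $4\mid n$, then because we also have $6\mid n$, the hypothesis $n\equiv 0\pmod 4$ of Corollary~\ref{cor:n=0mod4} is satisfied, and that corollary directly delivers a $\vec{C}_t$-factorization of $K_{n[m]}^\ast$.

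Otherwise $4\nmid n$ and $6\mid n$ together force $n=6s$ with $s$ odd. I then invoke Lemma~\ref{lem:6} to obtain a $\vec{C}_t$-factorization of $K_{6[t]}^\ast$, apply Corollary~\ref{cor:reduction}(2) with $p=3$ to promote this to a $\vec{C}_t$-factorization of $K_{n[t]}^\ast$, and finally---provided $s'\ge 3$; if $s'=1$ then $m=t$ and nothing is left to do---apply Corollary~\ref{cor:blow-digraph}(a) with the odd blow-up factor $s'$ to pass from $K_{n[t]}^\ast$ to $K_{n[t]}^\ast\wr\bar{K}_{s'}\cong K_{n[m]}^\ast$.

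I do not anticipate any real obstacle here: all the substantive combinatorics is already encapsulated in Lemma~\ref{lem:6} and in the reduction/blow-up machinery of Sections~4 and~5, so what remains is only bookkeeping together with the small case split above that routes the ``even multiple of~$6$'' values of $n$ through the previously established Corollary~\ref{cor:n=0mod4}.
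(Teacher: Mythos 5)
Your proof is correct and follows essentially the same route as the paper's: handle $4\mid n$ via Corollary~\ref{cor:n=0mod4}, and otherwise write $n=6s$ with $s$ odd and combine Lemma~\ref{lem:6} with Corollary~\ref{cor:reduction}(2) applied with $p=3$. Your explicit final passage from $K_{n[t]}^\ast$ to $K_{n[m]}^\ast\cong K_{n[t]}^\ast\wr\bar{K}_{s'}$ via Corollary~\ref{cor:blow-digraph}(a) is a step the paper leaves implicit, and you are right to include it.
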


\begin{proof}
If $n \equiv 0 \pmod{4}$, then Corollary~\ref{cor:n=0mod4} yields the desired result. Hence we may assume that $n=6s$ for $s$ odd.
By Lemma~\ref{lem:6}, there exists a $\vec{C}_t$-factorization of $K_{6[t]}^\ast$. Hence by Corollary~\ref{cor:reduction}(2), there exists a $\vec{C}_t$-factorization of $K_{6s[t]}^\ast$.
\end{proof}


\section{$\vec{C}_t$-factorizaton of $K_{4[m]}^*$ with $m$ odd and  $\gcd(4,t)=4$}

In this section, we settle the first exception from Proposition~\ref{pro:easy}(1).

\begin{lemma}\label{lem:4,4}
Let $p$ be an odd prime. Then $K_{4[p]}^\ast$ admits
\begin{enumerate}[(a)]
\item a $\vec{C}_{4p}$-factorization and
\item a $\vec{C}_{4}$-factorization.
\end{enumerate}
\end{lemma}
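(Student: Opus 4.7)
I plan to prove both parts by explicit constructions via the difference/translation method. Identify $V(K_{4[p]}^*)$ with $\ZZ_4 \times \ZZ_p$, with parts $V_r = \{r\} \times \ZZ_p$, and let $\sigma:(r,j) \mapsto (r,j+1)$ be an automorphism of $K_{4[p]}^*$ of order $p$. The action of $\sigma$ on $A(K_{4[p]}^*)$ splits the $12p^2$ arcs into $12p$ orbits, which I call \emph{arc-types}, one per triple $(r_1,r_2,d)$ with $r_1 \neq r_2 \in \ZZ_4$ and $d \in \ZZ_p$; each arc-type has $p$ arcs. The key observation is: a base subdigraph $B$ whose arcs all lie in pairwise distinct arc-types yields $p$ arc-disjoint $\sigma$-translates $B, \sigma B, \ldots, \sigma^{p-1} B$ jointly covering exactly the arcs of the types appearing in $B$.

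For part (a), I would construct three Hamilton cycles $H_1, H_2, H_3$ of $K_{4[p]}^*$, each using $4p$ arcs of $4p$ pairwise distinct arc-types, so that the arc-type sets of $H_1, H_2, H_3$ partition the $12p$ arc-types. The collection $\{\sigma^k H_i : k \in \ZZ_p, i \in \{1,2,3\}\}$ is then the desired $\vec{C}_{4p}$-factorization. Each $H_i$ would be built as a ``staircase'' Hamilton cycle obtained by concatenating $p$ length-$4$ rainbow segments with $\ZZ_p$-displacement tuples chosen to (i) avoid arc-type collisions, and (ii) produce a single cycle of length $4p$ rather than a disjoint union of shorter cycles. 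Since each $H_i$ uses $4p$ of the $12p$ types (one third), the partition condition is loose enough to admit direct, explicit solutions.

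For part (b), I would construct three $\vec{C}_4$-factors $F_1, F_2, F_3$, each consisting of $p$ vertex-disjoint directed 4-cycles using $4p$ arcs of $4p$ distinct arc-types, such that the three type-sets partition the $12p$ arc-types; the collection $\{\sigma^k F_i : k \in \ZZ_p, i \in \{1,2,3\}\}$ is then the desired $\vec{C}_4$-factorization. The main obstacle is that the naive choice---letting each $F_i$ be a single $\sigma$-orbit of one rainbow base 4-cycle---yields a factor covering only $4$ of the $12$ directed inter-part pairs, and no three of the six possible rainbow part-patterns on $\ZZ_4$ can partition the $12$ directed pairs: in the ``pair-sharing graph'' of the six rainbow patterns (an edge means the two patterns share some directed part-pair), the non-edges form a perfect matching of size $3$, so the independence number is only $2$. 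Consequently, each $F_i$ must contain 4-cycles of more than one part-pattern---including non-rainbow patterns (those visiting only 2 or 3 parts)---with displacements carefully aligned so that the $4p$ arc-types in each factor are distinct and the three type-sets are disjoint across factors. I expect to carry out the resulting case analysis for all odd primes $p$, treating $p = 3$ separately if the generic construction requires modification.
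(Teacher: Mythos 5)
Your framework is sound: translates of a base subdigraph under the order-$p$ rotation $\sigma$ are pairwise arc-disjoint precisely when the base uses pairwise distinct arc-types, and three base Hamilton cycles (resp.\ three base $\vec{C}_4$-factors) whose type-sets partition the $12p$ types would indeed yield the two factorizations. Your obstruction analysis for (b) is also correct: three pairwise pair-disjoint rainbow patterns would amount to a directed Hamilton decomposition of $K_4^*$, which does not exist, so non-rainbow $4$-cycles are forced into the base factors. The problem is that this is where your argument stops. The entire mathematical content of this lemma \emph{is} the explicit construction of the starters, and you have not produced any: for (a) you assert that suitable ``staircase'' displacement tuples exist because the partition condition is ``loose enough,'' and for (b) you defer to a case analysis you ``expect to carry out.'' Neither claim is self-evidently true for all odd primes --- in particular $p=3$ leaves very little room (only $36$ arc-types split among three factors of $12$ types each), and ensuring simultaneously that each base is a single $4p$-cycle (not a union of shorter cycles), that its $4p$ types are distinct, and that the three type-sets partition all $12p$ types is exactly the kind of constraint system that can fail for small parameters. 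As written, the proposal is a plan, not a proof.

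For comparison, the paper takes a different and more economical group action: it fixes one part $X$ pointwise and merges the other three parts into a single circulant on $\ZZ_{3p}$, acted on by a rotation $\rho$ of order $3p$. This reduces the bookkeeping to covering the $2p$ differences $d\not\equiv 0\pmod 3$ exactly once each with a \emph{single} starter (a Hamilton cycle threaded through all $p$ vertices of $X$ for part (a); one $\vec{C}_4$-factor built from a base $4$-cycle plus $p-2$ short paths closed up through $X$ for part (b), with $p=3$ handled separately by three starters under $\rho^3$). The larger orbit length means one starter suffices instead of three, and the arcs meeting $X$ are absorbed automatically because the starter passes through each $x_j$ exactly once. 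If you want to salvage your approach, you must exhibit the three base blocks explicitly for general $p$ and verify the type-partition; until then the proof is incomplete.
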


\begin{figure}[t]
\centerline{\includegraphics[scale=0.6]{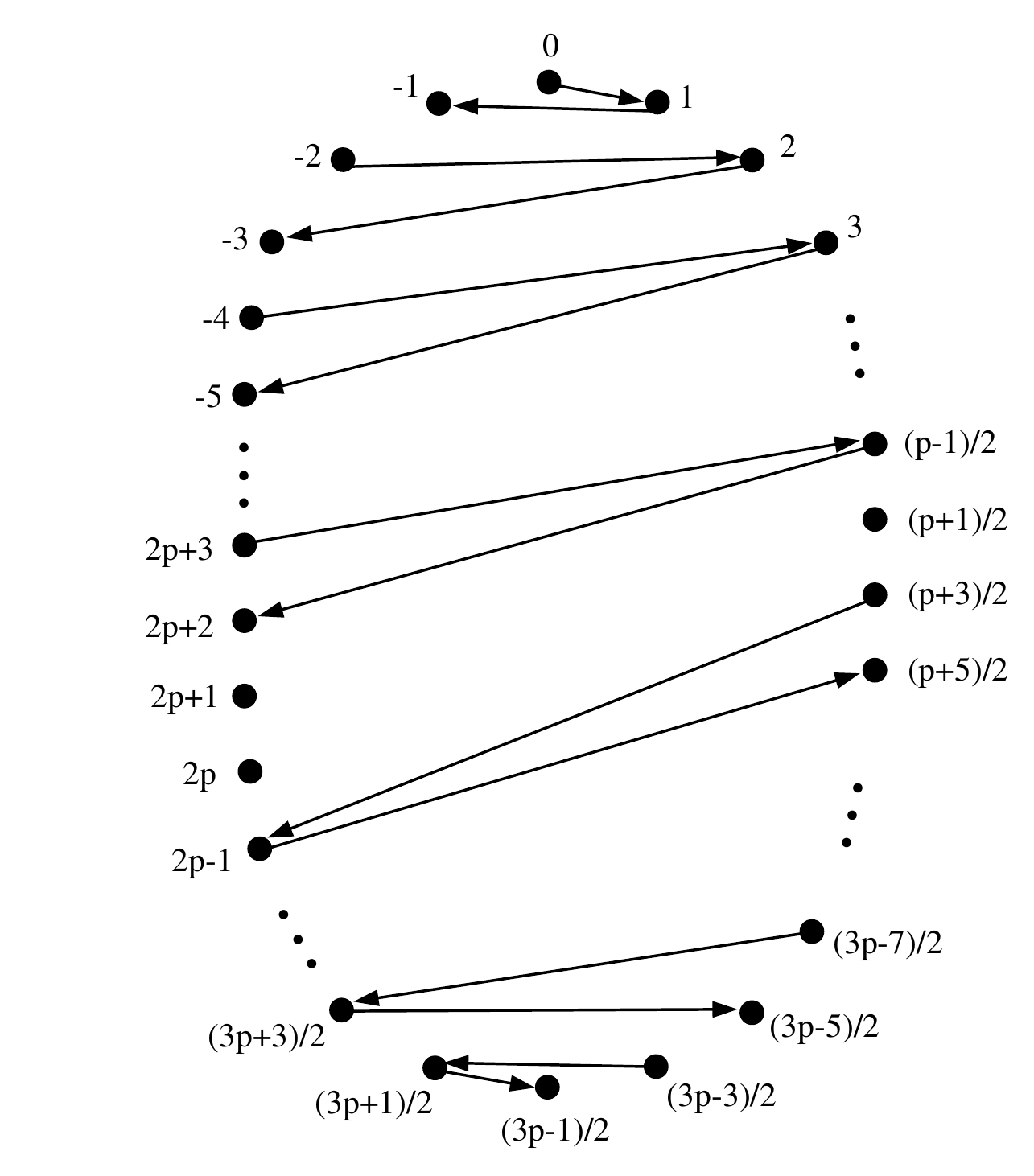}}
\caption{Directed 2-paths $P_0,\ldots,P_{\frac{p-3}{2}},Q_0,\ldots,Q_{\frac{p-3}{2}}$ in the construction of a $\vec{C}_{4p}$-factorization and a $\vec{C}_{4}$-factorization of $K_{4[p]}^\ast$. (All the vertices are in $V$, and only their subscripts are specified.) }
\label{fig:Fig7}
\end{figure}

\begin{proof}
Let the vertex set of $D=K_{4[p]}^\ast$ be $V \cup X$, where $V$ and $X$ are disjoint sets, with $V=\{ v_i: i \in \ZZ_{3p}\}$ and $X=\{ x_i: i \in \ZZ_p\}$. The four parts (holes) of $D$ are $X$ and $V_r=\{ v_{3i+r}: i=0,1,\ldots,p-1 \}$, for $r=0,1,2$.
Note that $D[V]$ is a circulant digraph with connection set (set of differences) $\D=\{ d \in \ZZ_{3p}: d \not\equiv 0 \pmod{3}\}=\{ \pm 1, \pm 2, \pm 4, \pm 5, \pm 7, \ldots,\pm \frac{3p-5}{2},\pm \frac{3p-1}{2} \}$. Let $\rho$ be the permutation $\rho=(v_0 \, v_1 \, \ldots \, v_{3p-1} )$, fixing  $X$ pointwise.

First, for each $i \in \{ 0,1,2,\ldots,\frac{p-3}{2} \}$, we define the directed 2-path
$$P_i= v_{-2i} v_{i+1} v_{-(2i+1)}$$
with the set of differences
$$\D(P_i)=\{ 3i+1,-(3i+2) \}.$$
See Figure~\ref{fig:Fig7}. Let $Q_i$ be the directed 2-path obtained from $P_i$ by applying $\rho^{\frac{3p-1}{2}}$ and reversing the direction; that is,
$$Q_i= v_{-2i+\frac{3p-3}{2}}  v_{i+\frac{3p+1}{2}} v_{-2i+\frac{3p-1}{2}}  $$
and
$$\D(Q_i)=\{ -(3i+1),3i+2 \}.$$
Observe that directed 2-paths $P_0,\ldots,P_{\frac{p-3}{2}},Q_0,\ldots,Q_{\frac{p-3}{2}}$ are pairwise disjoint and use all vertices in the set $V-U$, where
$$U=\left\{ \frac{p+1}{2}, 2p,2p+1 \right\}.$$
Moreover, they jointly use exactly one arc of each difference in
$$\D - \left\{ \pm \frac{3p-1}{2} \right\}.$$
The rest of the construction depends on the statement to be proved.

\FloatBarrier

\begin{enumerate}[(a)]
\item Let
$$P=v_{2p+1} v_{\frac{p+1}{2}}, \quad Q=Q_0 v_{\frac{3p-1}{2}} v_0 P_0, \quad \mbox{ and } \quad R=v_{2p},$$
so $P$ is directed 1-path, $Q$ is a directed 5-path, and $R$ is a directed 0-path,
with
$$\D(P)=\left\{ \frac{3p-1}{2} \right\}, \quad
\D(Q)=\left\{ \pm 1, \pm 2, -\frac{3p-1}{2} \right\},  \quad \mbox{ and } \quad \D(R)=\emptyset.$$
Directed paths $P_1,\ldots,P_{\frac{p-3}{2}},Q_1,\ldots,Q_{\frac{p-3}{2}},P,Q,$ and $R$  are pairwise disjoint, use all vertices in the set $V$,  and  jointly use exactly one arc of each difference in $\D$. As there are $p$ of these paths, we can use the $p$ vertices of $X$ to join them into a directed Hamilton cycle $C$ of $D$; for example, as follows:
\begin{eqnarray*}
C &=& P_1 v_{-3} x_0 v_{-4} P_2 v_{-5} x_1 \ldots  Q_{\frac{p-3}{2}} v_{\frac{p+5}{2}} x_{p-4}  v_{2p+1} P v_{\frac{p+1}{2}} x_{p-3} v_{\frac{3p-3}{2}} Q v_{-1}  x_{p-2} v_{2p} R v_{2p}  x_{p-1} v_{-2}.
\end{eqnarray*}
Then $\{ \rho^i(C): i \in \ZZ_{3p} \}$ is the required $\vec{C}_{4p}$-factorization of $D$.

\item Several cases will need to be considered.

{\sc Case} $p \ge 5$. Note that, since $p$ is prime, we have $p \not\equiv 0 \pmod{3}$.

First, define a directed 4-cycle
$$C_0=v_{-1} v_1 v_{\frac{3p+1}{2}} v_{\frac{3p-3}{2}} v_{-1}$$
with $$\D(C_0) = \left\{ \pm 2, \pm \frac{3p-1}{2} \right\}.$$

{\sc Subcase} $p \equiv 1 \pmod{3}$. We will use directed 2-paths $P_1,\ldots,P_{\frac{p-3}{2}},Q_1,\ldots,Q_{\frac{p-3}{2}}$ defined earlier, except that we replace
$$Q_{\frac{p-1}{3}}=v_{\frac{5p-5}{6}}  v_{\frac{11p+1}{6}}  v_{\frac{5p+1}{6}}$$ with
$$Q_{\frac{p-1}{3}}'=v_{\frac{5p+1}{6}} v_{\frac{5p-5}{6}}  v_{\frac{11p+1}{6}}.$$
In addition, we let
$$R=v_0 v_{2p} v_{2p+1}.$$
Observe that directed 2-paths $P_{\frac{p-1}{3}}$, $Q_{\frac{p-1}{3}}'$, and $R$ jointly use each difference in $\{ \pm 1, \pm p,$ $ \pm (p+1) \}$ exactly once, and the $p-2$ paths $R,P_1,\ldots,P_{\frac{p-3}{2}},Q_1,\ldots,Q_{\frac{p-4}{3}} Q_{\frac{p-1}{3}}' Q_{\frac{p+2}{3}} \ldots Q_{\frac{p-3}{2}}$, together with the 4-cycle $C_0$, jointly use each difference in $\D$ exactly once. In addition, these paths and the cycle are pairwise disjoint, and vertices $v_{\frac{p+1}{2}}$ and $v_{\frac{3p-1}{2}}$ are the only vertices of $V$ that lie in none of them. We now use vertices $x_0,\ldots, x_{p-3}$ to complete the $p-2$ directed 2-paths into directed 4-cycles $C_1,\ldots,C_{p-2}$, and finally define
$$C_{p-1}=v_{\frac{p+1}{2}} x_{p-2} v_{\frac{3p-1}{2}} x_{p-1} v_{\frac{p+1}{2}}.$$
Let $F=C_0 \cup C_1 \cup \ldots \cup C_{p-1}$.
Then $\{ \rho^i(F): i \in \ZZ_{3p} \}$ is the required $\vec{C}_{4}$-factorization of $D$.

\smallskip

{\sc Subcase} $p \equiv 2 \pmod{3}$. This subcase is similar, so we only highlight the differences.
Again, we use directed 2-paths $P_1,\ldots,P_{\frac{p-3}{2}},Q_1,\ldots,Q_{\frac{p-3}{2}}$ defined earlier, except that we first replace
directed 2-paths $P_{\frac{p-3}{2}}$ and $Q_{\frac{p-3}{2}}$ with
$$P_{\frac{p-3}{2}}'= v_{\frac{p-1}{2}} v_{2p+3} v_{\frac{p+1}{2}} \quad \mbox{ and } \quad Q_{\frac{p-3}{2}}'=  v_{2p} v_{\frac{p+5}{2}} v_{2p-1},$$
which cover the same differences, namely, $\pm \frac{3p-5}{2}$ and $\pm \frac{3p-7}{2}$, but use vertices $v_{\frac{p+1}{2}}$ and $v_{2p}$ instead of vertices $v_{2p+2}$ and $v_{\frac{p+3}{2}}$, respectively. We also replace
$$Q_{\frac{p-2}{3}}=v_{\frac{5p-1}{6}}  v_{\frac{11p-1}{6}}  v_{\frac{5p+5}{6}}$$ with
$$Q_{\frac{p-2}{3}}'= v_{\frac{5p+5}{6}} v_{\frac{5p-1}{6}} v_{\frac{11p-1}{6}},$$
and additionally define
$$R=v_0 v_{2p+1} v_{2p+2}.$$
Observe that directed 2-paths $P_{\frac{p-2}{3}}$, $Q_{\frac{p-2}{3}}'$, and $R$ jointly use each difference in $\{ \pm 1, \pm (p-1), \pm p \}$ exactly once, and the $p-2$ paths $R,P_1,\ldots,P_{\frac{p-5}{2}},P_{\frac{p-3}{2}}',Q_1,\ldots,Q_{\frac{p-5}{3}}, Q_{\frac{p-2}{3}}', Q_{\frac{p+1}{3}},$ $\ldots, Q_{\frac{p-5}{2}}, Q_{\frac{p-3}{2}}'$, together with the 4-cycle $C_0=v_{-1} v_1 v_{\frac{3p+1}{2}} v_{\frac{3p-3}{2}} v_{-1}$, jointly use each difference in $\D$ exactly once. Again, these paths and the cycle are pairwise disjoint, this time using all vertices in $V$ except $v_{\frac{p+3}{2}}$ and $v_{\frac{3p-1}{2}}$. The construction is now completed as in the previous case.

{\sc Case} $p=3$. We have $V=\{ v_i: i \in \ZZ_9\}$ and $X=\{ x_0,x_1,x_2 \}$. We construct three starter $\vec{C}_4$-factors, and use $\rho^3$, where $\rho=( v_0 \, v_1 \, \ldots, \, v_8 )$, to generate the rest.

Let $\D_3=\left\{ d_r: d = \pm 1, \pm 2,  \pm 4,  r \in \ZZ_3 \right\}$.
It will be helpful to keep track of the {\em base-3 difference} of each arc $(v_i,v_j)$,  defined as $d_r \in \D_3$ such that $j-i \equiv d \pmod{9}$ and $r \equiv i \pmod{3}$.

Define the following directed 4-cycles in $D$:
\begin{align*}
C_0^0 &= v_0 v_7 v_3 x_0 v_0 && C_1^0 = v_7 v_6 v_5 x_0 v_7 && C_2^0 = v_5 v_0 v_4 x_0 v_5 \\
C_0^1 &= v_4 v_2 v_1 x_1 v_4 && C_1^1 = v_3 v_4 v_8 x_1 v_3 && C_2^1 = v_8 v_1 v_3 x_1 v_8 \\
C_0^2 &= v_5 v_6 v_8 x_2 v_5 && C_1^2 = v_1 v_2 v_0 x_2 v_1 && C_2^2 = v_6 v_2 v_7 x_2 v_6
\end{align*}
Observe that each $F_i= C_i^0 \cup C_i^1 \cup C_i^2$ is a $\vec{C}_4$-factor in $D$, and that  $F_0,F_1,F_2$ jointly contain exactly one arc of each base-3 difference in $\D_3$. Moreover, for each $j,r \in \ZZ_3$, the $\vec{C}_4$-factors $F_0,F_1,F_2$ jointly contain exactly one arc of the form $(x_j,v_i)$ with $i \equiv r \pmod{3}$, and exactly one arc of the form $(v_i,x_j)$ with $i \equiv r \pmod{3}$. Consequently,
$\{ \rho^{3k}(F_i): i,k=0,1,2 \}$ is a $\vec{C}_4$-factorization of $K_{4[3]}^\ast$.
\end{enumerate}
\end{proof}

\begin{cor}\label{cor:n=4}
Assume $m \ge 3$ is odd, $t|4m$, and $\gcd(4,t)=4$.
Then $K_{4[m]}^\ast$ admits a $\vec{C}_t$-factorization.
\end{cor}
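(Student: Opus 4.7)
Since $\gcd(4,t)=4$ we have $4\mid t$, so write $t=4a$. From $t\mid 4m$ we get $a\mid m$, and since $m$ is odd, so is $a$. My plan is to build a $\vec{C}_t$-factorization of $K_{4[m]}^\ast$ from a $\vec{C}_t$-factorization of a suitable ``small'' subdigraph $K_{4[m']}^\ast$ (with $m'\mid m$) via wreath-product blow-ups provided by Corollary~\ref{cor:blow-digraph}, taking the small case from Lemma~\ref{lem:4,4}. I split according to whether $a=1$ or $a\ge 3$.

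In the case $a=1$ (so $t=4$), pick any odd prime $p$ dividing $m$. By Lemma~\ref{lem:4,4}(b), $K_{4[p]}^\ast$ admits a $\vec{C}_4$-factorization. If $m=p$ we are done; otherwise $m/p$ is an odd integer at least $3$, and Corollary~\ref{cor:blow-digraph}(a) applied to $D=K_{4[p]}^\ast$ and $s=m/p$ yields a $\vec{C}_4$-factorization of $K_{4[p]}^\ast \wr \bar{K}_{m/p}\cong K_{4[m]}^\ast$.

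In the case $a\ge 3$, pick an odd prime $p$ dividing $a$. By Lemma~\ref{lem:4,4}(a), $K_{4[p]}^\ast$ admits a $\vec{C}_{4p}$-factorization. If $a>p$, write $a=pb$ with $b$ an odd integer at least $3$; applying Corollary~\ref{cor:blow-digraph}(b) to $D=K_{4[p]}^\ast$ (with base cycle length $4p$) and $s=b$ produces a $\vec{C}_{4pb}=\vec{C}_{4a}=\vec{C}_t$-factorization of $K_{4[p]}^\ast\wr\bar{K}_b\cong K_{4[a]}^\ast$. If $a=p$, we already have a $\vec{C}_t$-factorization of $K_{4[a]}^\ast$. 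Finally, if $m>a$, then $m/a$ is an odd integer at least $3$, and Corollary~\ref{cor:blow-digraph}(a) applied to $K_{4[a]}^\ast$ with $s=m/a$ yields a $\vec{C}_t$-factorization of $K_{4[a]}^\ast\wr\bar{K}_{m/a}\cong K_{4[m]}^\ast$; if $m=a$ we are already done.

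There is no real obstacle here beyond bookkeeping, since all the substantive work (the two base decompositions of $K_{4[p]}^\ast$ for $p$ an odd prime) was carried out in Lemma~\ref{lem:4,4}, and the wreath-product machinery of Corollary~\ref{cor:blow-digraph} cleanly multiplies the hole size by any odd factor (keeping the cycle length) and also multiplies the cycle length by any odd factor. The one thing to be careful about is to apply part (b) of Corollary~\ref{cor:blow-digraph} before part (a), so that the cycle length is increased from $4p$ up to $4a=t$ prior to inflating the holes from size $a$ to size $m$.
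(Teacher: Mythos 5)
Your proposal is correct and follows essentially the same route as the paper's proof: handle $t=4$ via Lemma~\ref{lem:4,4}(b) followed by Corollary~\ref{cor:blow-digraph}(a), and $t=4a$ with $a\ge 3$ via Lemma~\ref{lem:4,4}(a), then Corollary~\ref{cor:blow-digraph}(b) to reach cycle length $t$, then Corollary~\ref{cor:blow-digraph}(a) to inflate the holes. Your explicit treatment of the degenerate cases where a blow-up factor equals $1$ is a minor point the paper leaves implicit.
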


\begin{proof}
The assumptions imply that $t=4s$ for some odd $s \ge 1$, and $s|m$.

If $s=1$, let $p$ be any prime factor of $m$. Then by Lemma~\ref{lem:4,4}(b), the digraph $K_{4[p]}^\ast$ admits a $\vec{C}_4$-factorization, and it follows from Corollary~\ref{cor:blow-digraph}(a) that $K_{4[m]}^\ast \cong K_{4[p]}^\ast \wr \bar{K_{\frac{m}{p}}}$ admits a $\vec{C}_4$-factorization.

If $s \ge 3$, let $p$ be any prime factor of $s$. Then by Lemma~\ref{lem:4,4}(a), the digraph $K_{4[p]}^\ast$ admits a $\vec{C}_{4p}$-factorization. It now follows from Corollary~\ref{cor:blow-digraph}(b) that $K_{4[s]}^\ast \cong K_{4[p]}^\ast \wr \bar{K_{\frac{s}{p}}}$ admits a $\vec{C}_{4s}$-factorization. Finally, by  Corollary~\ref{cor:blow-digraph}(a), $K_{4[m]}^\ast$ admits a $\vec{C}_{4s}$-factorization.
\end{proof}


\section{$\vec{C}_t$-factorizaton of $K_{6[m]}^*$ with $m$ odd and  $\gcd(6,t)=6$}

In this section, we settle the second exception from Proposition~\ref{pro:easy}(1).

\begin{lemma}\label{lem:6,6}
Let $p$ be an odd prime. Then $K_{6[p]}^\ast$ admits
\begin{enumerate}[(a)]
\item a $\vec{C}_{6p}$-factorization and
\item a $\vec{C}_{6}$-factorization.
\end{enumerate}
\end{lemma}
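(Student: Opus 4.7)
The plan is to follow the template of Lemma~\ref{lem:4,4}. Let the vertex set of $D = K_{6[p]}^\ast$ be $V \cup X$, where $V = \{v_i : i \in \ZZ_{5p}\}$ and $X = \{x_j : j \in \ZZ_p\}$, and let the six parts be $X$ together with $V_r = \{v_{5i+r} : i \in \ZZ_p\}$ for $r \in \{0,1,2,3,4\}$. Let $\rho = (v_0 \, v_1 \, \ldots \, v_{5p-1})$ fix $X$ pointwise. Then $D[V]$ is a circulant digraph with connection set $\D = \{d \in \ZZ_{5p} : d \not\equiv 0 \pmod{5}\}$, of size $4p$.

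Both parts will reduce to a single combinatorial core: construct $p$ pairwise vertex-disjoint directed $4$-paths $P_1, \ldots, P_p$ in $D[V]$ whose union covers $V$ and whose $4p$ arcs realize each $d \in \D$ exactly once. Writing $s_j$ and $t_j$ for the source and terminal of $P_j$, part~(b) finishes by letting $C_j = P_j \, t_j \, x_{j-1} \, s_j$, a directed $6$-cycle, and $F = C_1 \cup \ldots \cup C_p$, a $\vec{C}_6$-factor; the rotations $\{\rho^i(F) : i \in \ZZ_{5p}\}$ give the $\vec{C}_6$-factorization, because $F$ realizes each difference exactly once (so its $\rho$-orbit sweeps out all $V$-arcs) and $F$ contains exactly one in-arc and one out-arc at each $x_j$ (so its $\rho$-orbit sweeps out all $X$-incident arcs). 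Part~(a) finishes by splicing the 4-paths via the $x_j$'s into a single directed Hamilton cycle $C = s_1 \, P_1 \, t_1 \, x_0 \, s_2 \, P_2 \, t_2 \, x_1 \, \ldots \, x_{p-1} \, s_1$, with $\{\rho^i(C) : i \in \ZZ_{5p}\}$ the required $\vec{C}_{6p}$-factorization by the same accounting.

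For the $4$-path construction I would try a zigzag family reminiscent of Lemma~\ref{lem:6}: for $i = 0, 1, \ldots, (p-3)/2$, let
$$P_i = v_{-3i} \, v_{2i+1} \, v_{-(3i+1)} \, v_{2i+2} \, v_{-(3i+2)},$$
whose arc-differences are $\{5i+1,\, -(5i+2),\, 5i+3,\, -(5i+4)\}$, and let $Q_i = \rho^{(5p-1)/2}(P_i^{\mathrm{rev}})$ be a shifted reversal covering the partner block $\{-(5i+1),\, 5i+2,\, -(5i+3),\, 5i+4\}$ on a disjoint vertex set. Together the $p-1$ paths $P_0, \ldots, P_{(p-3)/2}, Q_0, \ldots, Q_{(p-3)/2}$ exhaust $4p-4$ of the differences on $5p-5$ of the vertices, leaving four differences (roughly $\pm (5p-3)/2$ and $\pm (5p-1)/2$, up to corrections from reducing modulo~$5p$) and five vertices clustered in two short arcs of $\ZZ_{5p}$; one carefully chosen exceptional $4$-path must realize these four leftover differences on those five leftover vertices.

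The hard part will be that exceptional $4$-path. Because the five leftover vertices are geometrically concentrated in $\ZZ_{5p}$, naive choices of base parameters can obstruct any Hamilton $4$-path on them with the required difference multiset (I checked that for $p=5$ with the base above, the leftover is $\{v_5,v_6,v_{17},v_{18},v_{19}\}$ with target differences $\{\pm 11,\pm 12\}$, and no valid Hamilton $4$-path exists). The fix is to introduce extra flexibility near the boundary --- either by tweaking $P_{(p-3)/2}$ and $Q_{(p-3)/2}$, or by a short case analysis on $p \bmod 3$ (or $\bmod 4$), in the spirit of Subcases~3.1, 3.2 and Case~4 of Lemma~\ref{lem:6}. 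The small case $p = 3$ will almost certainly need a direct ad hoc construction --- most likely three starter $\vec{C}_6$-factors developed under the subgroup $\langle \rho^5 \rangle$, in exactly the spirit of the $p=3$ case of Lemma~\ref{lem:4,4}(b).
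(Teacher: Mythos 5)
Your high-level template is the right one and matches the paper's in spirit: same vertex labelling $V\cup X$ with $V=\{v_i: i\in\ZZ_{5p}\}$, same rotation $\rho$ fixing $X$, reduction of both parts to a family of pairwise disjoint directed paths in the circulant $D[V]$ realizing each difference in $\D$ exactly once, then capping with (part (b)) or splicing through (part (a)) the vertices of $X$ and developing under $\rho$. Your accounting for why one starter generates the whole factorization (each $V$-difference once, one in-arc and one out-arc at each $x_j$) is correct. But the proposal is not a proof: the entire content of the lemma is the explicit difference family, and that is precisely the piece you leave unresolved. You candidly report that with your base family $P_0,\ldots,P_{(p-3)/2},Q_0,\ldots,Q_{(p-3)/2}$ the leftover for $p=5$ is $\{v_5,v_6,v_{17},v_{18},v_{19}\}$ with target differences $\{\pm 11,\pm 12\}$ and that no directed Hamilton $4$-path on those vertices realizes them; the proposed remedies (tweak the boundary paths, case analysis on $p\bmod 3$ or $p\bmod 4$, ad hoc work for $p=3$) are named but not carried out, so the construction is not established for any $p$.

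It is worth seeing how the paper escapes this trap, because the escape is structural rather than a boundary tweak. The paper starts the zigzag family at $i=1$ rather than $i=0$, so only $p-3$ of the cells are uniform $4$-paths; this leaves $15$ vertices and the $12$ differences $\{\pm1,\pm2,\pm3,\pm4,\pm\frac{5p-3}{2},\pm\frac{5p-1}{2}\}$ uncovered, and that slack suffices to build bespoke irregular pieces --- for (a) a directed $9$-path $P_0$, a $1$-path $R_1$ and a $2$-path $R_2$; for (b) a full directed $6$-cycle $C_0$ lying entirely inside $V$, a $4$-path and a $2$-path --- that absorb the leftovers uniformly for every odd prime $p\ge 3$, with no congruence cases and no special treatment of $p=3$. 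The paper also uses different shifts for the $Q_i$ in the two parts ($\rho^{(5p-1)/2}$ versus $\rho^{(5p-3)/2}$) precisely to position the leftover vertex set where the irregular pieces need it. In particular, your insistence that all $p$ cells of the part-(b) starter be $4$-paths each capped by one vertex of $X$ is more rigid than necessary --- the paper's starter has one $6$-cycle avoiding $X$ entirely and one cell consuming two vertices of $X$ --- and relaxing exactly that rigidity is what makes the endgame solvable.
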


\begin{figure}[t]
\centerline{\includegraphics[scale=0.6]{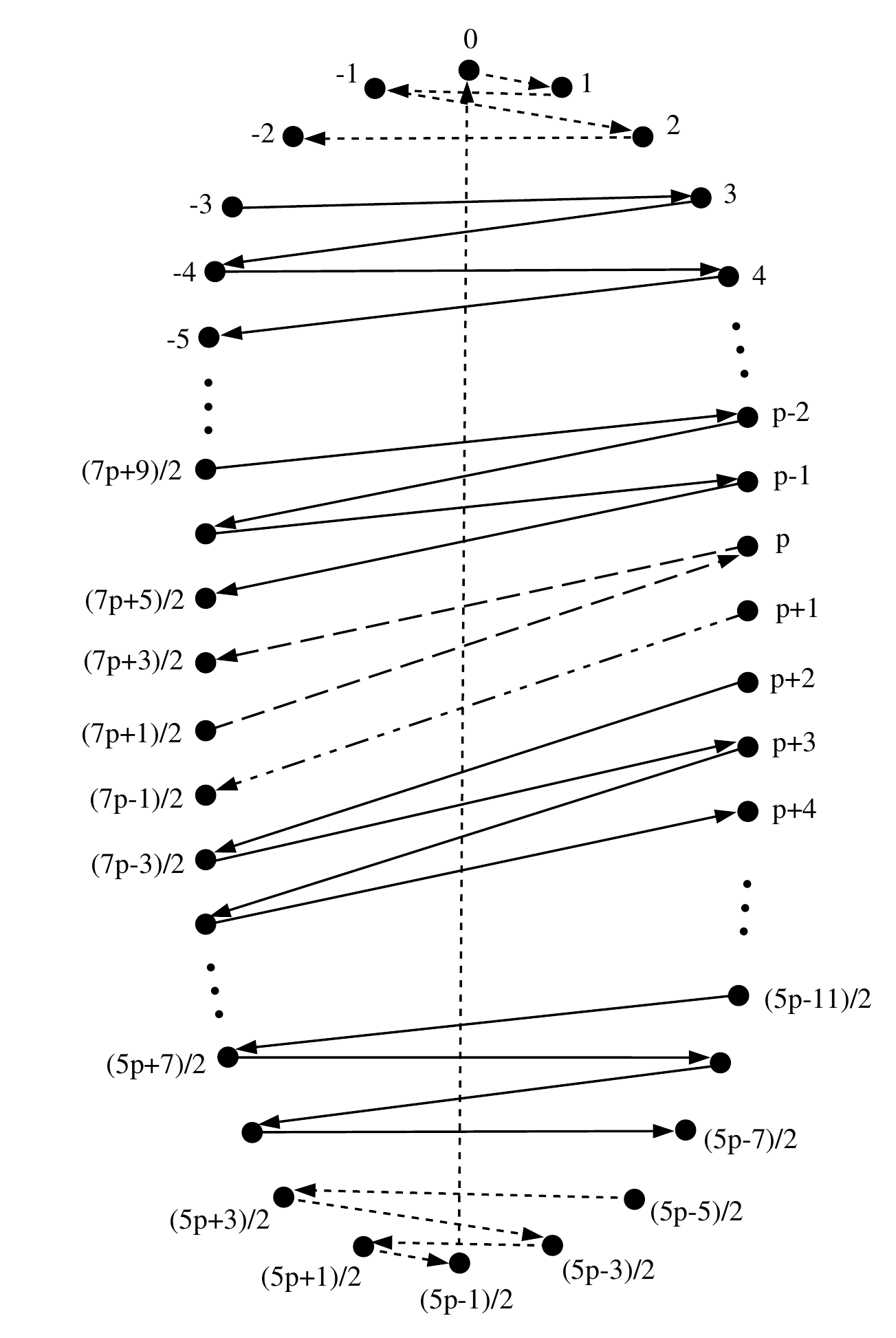}}
\caption{Directed 4-paths $P_1,\ldots,P_{\frac{p-3}{2}},Q_1,\ldots,Q_{\frac{p-3}{2}}$ (solid lines) and directed paths $P_0,R_1,R_2$ (dashed lines) in the construction of a $\vec{C}_{6p}$-factorization of $K_{6[p]}^\ast$. (All the vertices are in $V$, and only their subscripts are specified.) }
\label{fig:Fig8}
\end{figure}

\begin{proof}
Let the vertex set of $D=K_{6[p]}^\ast$ be $V \cup X$, where $V$ and $X$ are disjoint sets, with $V=\{ v_i: i \in \ZZ_{5p}\}$ and $X=\{ x_i: i \in \ZZ_p\}$. The six parts (holes) of $D$ are $X$ and $V_r=\{ v_{5i+r}: i=0,1,\ldots,p-1 \}$, for $r=0,1,\ldots,4$.
Note that  $D[V]$ is a circulant digraph with connection set (set of differences) $\D=\{ d \in \ZZ_{5p}: d \not\equiv 0 \pmod{5}\}=\{ \pm 1, \pm 2, \pm 3,\pm 4, \pm 6,  \ldots, \pm \frac{5p-7}{2}, \pm \frac{5p-3}{2},\pm \frac{5p-1}{2} \}$. Let $\rho$ be the permutation $\rho=(v_0 \, v_1 \, \ldots \, v_{5p-1} )$, which fixes $X$ pointwise.

First, for each $i \in \{ 1,2,\ldots,\frac{p-3}{2} \}$, we define the directed 4-path
$$P_i= v_{-3i} v_{2i+1} v_{-(3i+1)} v_{2i+2} v_{-(3i+2)}$$
with the set of differences
$$\D(P_i)=\{ 5i+1,-(5i+2),5i+3,-(5i+4) \}.$$
See Figure~\ref{fig:Fig8}. The rest of the construction depends on the statement to be proved.

\begin{enumerate}[(a)]
\item  Let $Q_i$ be the directed 4-path obtained from $P_i$ by applying $\rho^{\frac{5p-1}{2}}$ and reversing the direction; that is,
$$Q_i= v_{\frac{5p-5}{2}-3i} v_{\frac{5p+3}{2}+2i}
v_{\frac{5p-3}{2}-3i} v_{\frac{5p+1}{2}+2i} v_{\frac{5p-1}{2}-3i}$$
and
$$\D(Q_i)=\{ -(5i+1),5i+2,-(5i+3),5i+4 \}.$$
See Figure~\ref{fig:Fig8}. Observe that directed 4-paths $P_1,\ldots,P_{\frac{p-3}{2}},Q_1,\ldots,Q_{\frac{p-3}{2}}$ are pairwise disjoint and use all vertices in the set $V-U$, where
$$U=\left\{ v_{-2},v_{-1},v_0,v_1,v_2,v_p, v_{p+1}, v_{\frac{5p-5}{2}},v_{\frac{5p-3}{2}},v_{\frac{5p-1}{2}},v_{\frac{5p+1}{2}},
v_{\frac{5p+3}{2}},
v_{\frac{7p-1}{2}},v_{\frac{7p+1}{2}},v_{\frac{7p+3}{2}} \right\}.$$
Moreover, they jointly use exactly one arc of each difference in $\D-\D'$, where
$$\D' = \left\{ \pm 1,\pm 2, \pm3, \pm 4, \pm \frac{5p-3}{2},\pm \frac{5p-1}{2} \right\}.$$
Additionally, define directed paths
\begin{eqnarray*}
P_0 &=& v_{\frac{5p-5}{2}} v_{\frac{5p+3}{2}} v_{\frac{5p-3}{2}} v_{\frac{5p+1}{2}} v_{\frac{5p-1}{2}} v_0 v_1 v_{-1} v_2 v_{-2}, \\
R_1 &=& v_{p+1} v_{\frac{7p-1}{2}}, \quad \mbox{ and } \\
R_2 &=& v_{\frac{7p+1}{2}} v_p  v_{\frac{7p+3}{2}},
\end{eqnarray*}
and observe that $\D(P_0 \cup R_1 \cup R_2)=\D'$.

The $p$ directed paths $P_1,\ldots,P_{\frac{p-3}{2}},Q_1,\ldots,Q_{\frac{p-3}{2}},P_0,R_1,R_2$ are pairwise disjoint, use all vertices in $V$,  and  jointly use exactly one arc of each difference in $\D$. Hence we can use the $p$ vertices of $X$ to join them into a directed Hamilton cycle $C$ of $D$, similarly to the proof of Lemma~\ref{lem:4,4}(a). Then $\{ \rho^i(C): i \in \ZZ_{5p} \}$ is the required $\vec{C}_{6p}$-factorization of $D$.

\FloatBarrier

\item
In this case, let $Q_i$ be the directed 2-path obtained from $P_i$ by applying $\rho^{\frac{5p-3}{2}}$ and reversing the direction; that is,
$$Q_i= v_{\frac{5p-7}{2}-3i} v_{\frac{5p+1}{2}+2i}
v_{\frac{5p-5}{2}-3i} v_{\frac{5p-1}{2}+2i} v_{\frac{5p-3}{2}-3i}$$
and, again,
$$\D(Q_i)=\{ -(5i+1),5i+2,-(5i+3),5i+4 \}.$$
Observe that directed 4-paths $P_1,\ldots,P_{\frac{p-3}{2}},Q_1,\ldots,Q_{\frac{p-3}{2}}$ are pairwise disjoint and use all vertices in the set $V-U$, where
$$U=\left\{ v_{-2},v_{-1},v_0,v_1,v_2,v_p,  v_{\frac{5p-7}{2}},v_{\frac{5p-5}{2}},v_{\frac{5p-3}{2}},v_{\frac{5p-1}{2}},
v_{\frac{5p+1}{2}},
v_{\frac{7p-3}{2}},v_{\frac{7p-1}{2}},v_{\frac{7p+1}{2}},v_{\frac{7p+3}{2}} \right\}.$$
Moreover, they jointly use exactly one arc of each difference in $\D-\D'$, where
$$\D' = \left\{ \pm 1,\pm 2, \pm3, \pm 4, \pm \frac{5p-3}{2},\pm \frac{5p-1}{2} \right\}.$$
Additionally, on vertex set $U$, define the following directed 6-cycle and two directed paths:
\begin{eqnarray*}
C_0 &=& v_{-1} v_2 v_{-2}  v_{\frac{5p-7}{2}} v_{\frac{5p+1}{2}}
v_{\frac{5p-5}{2}} v_{-1}, \\
R_1 &=& v_p v_{\frac{7p-1}{2}} v_{\frac{7p+3}{2}} v_{\frac{7p+1}{2}} v_{\frac{7p-3}{2}}, \quad \mbox{ and }\\
R_2 &=& v_{\frac{5p-1}{2}} v_0 v_1,
\end{eqnarray*}
and observe that $\D(P_0 \cup R_1 \cup R_2)=\D'$.

Observe that the $p-1$ directed paths $P_1,\ldots,P_{\frac{p-3}{2}},Q_1,\ldots, Q_{\frac{p-3}{2}},R_1,R_2$, together with the directed 6-cycle $C_0$, jointly use each difference in $\D$ exactly once. In addition, these paths and the cycle are pairwise disjoint, using each vertex in $V$ except $v_{\frac{5p-3}{2}}$. We now use vertices $x_0,\ldots, x_{p-3}$ to complete the $p-2$ directed 4-paths
$P_1,\ldots,P_{\frac{p-3}{2}},Q_1,\ldots, Q_{\frac{p-3}{2}},R_1$ into directed 6-cycles $C_1,\ldots,C_{p-2}$, and finally use vertices $x_{p-2},v_{\frac{5p-3}{2}},x_{p-1}$ to complete the directed 2-path $R_2$ to a directed 6-cycle $C_{p-1}$.

Let $F=C_0 \cup \ldots \cup C_{p-1}$. Then $\{ \rho^i(F): i \in \ZZ_{5p}\}$ is a $\vec{C}_{6}$-factorization of $D$.
\end{enumerate}
\end{proof}

\begin{cor}\label{cor:n=6}
Assume $m \ge 3$ is odd, $t|6m$, and $\gcd(6,t)=6$.
Then $K_{6[m]}^\ast$ admits a $\vec{C}_t$-factorization.
\end{cor}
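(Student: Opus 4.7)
The plan is to mirror exactly the argument used for Corollary~\ref{cor:n=4}, replacing Lemma~\ref{lem:4,4} with Lemma~\ref{lem:6,6}. Since $\gcd(6,t)=6$ and $t\mid 6m$ with $m$ odd, we may write $t=6s$ for some odd divisor $s$ of $m$, and the proof then splits on whether $s=1$ or $s\ge 3$.

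In the case $s=1$ (i.e.\ $t=6$), I would pick any prime divisor $p$ of $m$ and invoke Lemma~\ref{lem:6,6}(b) to obtain a $\vec{C}_6$-factorization of $K_{6[p]}^\ast$. Since $m/p$ is odd, Corollary~\ref{cor:blow-digraph}(a) (applied when $m/p \ge 3$; otherwise there is nothing to do) lifts this to a $\vec{C}_6$-factorization of $K_{6[p]}^\ast \wr \bar{K}_{m/p}\cong K_{6[m]}^\ast$.

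In the case $s\ge 3$, I would pick any prime divisor $p$ of $s$ and invoke Lemma~\ref{lem:6,6}(a) to obtain a $\vec{C}_{6p}$-factorization of $K_{6[p]}^\ast$. Corollary~\ref{cor:blow-digraph}(b), applied with the odd factor $s/p$ (or done trivially if $s=p$), then produces a $\vec{C}_{6s}$-factorization of $K_{6[p]}^\ast\wr \bar{K}_{s/p}\cong K_{6[s]}^\ast$. Finally, Corollary~\ref{cor:blow-digraph}(a), applied with the odd factor $m/s$ (or trivially if $m=s$), yields a $\vec{C}_{6s}=\vec{C}_t$-factorization of $K_{6[s]}^\ast\wr \bar{K}_{m/s}\cong K_{6[m]}^\ast$.

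There is no real obstacle here — all the work has already been done in establishing Lemma~\ref{lem:6,6} and the blow-up machinery of Corollary~\ref{cor:blow-digraph}. The only minor bookkeeping issue is to observe that each of the ratios $m/p$, $s/p$, $m/s$ is an odd positive integer (since $m$ is odd and $p,s$ are odd divisors of $m$), so Corollary~\ref{cor:blow-digraph} applies whenever the ratio is at least $3$, and when it equals $1$ the corresponding blow-up step is vacuous.
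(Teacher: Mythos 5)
Your proposal is correct and is exactly the argument the paper intends: the paper's proof of this corollary literally says it is analogous to that of Corollary~\ref{cor:n=4} with Lemma~\ref{lem:6,6} in place of Lemma~\ref{lem:4,4}, which is precisely the two-case ($s=1$ versus $s\ge 3$) blow-up argument you spell out.
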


\begin{proof}
The proof is analogous to the proof of Corollary~\ref{cor:n=4}, using Lemma~\ref{lem:6,6} instead of Lemma~\ref{lem:4,4}.
\end{proof}


\section{$\vec{C}_t$-factorizaton of $K_{6[m]}^*$ with $m$ odd and  $\gcd(6,t)=3$}

We shall now address the exceptional case from Proposition~\ref{pro:easy}(2).

\begin{lemma}\label{lem:3,6}
Let $p$ be an odd prime. Then $K_{6[p]}^\ast$ admits
\begin{enumerate}[(a)]
\item a $\vec{C}_{3p}$-factorization and
\item if $p \le 37$, also a $\vec{C}_{3}$-factorization.
\end{enumerate}
\end{lemma}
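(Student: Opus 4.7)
For part (a), the plan is to adapt the technique of Lemma~\ref{lem:6,6}(a). Take $V(D)=V \cup X$ with $V=\{v_i : i \in \ZZ_{5p}\}$ and $X=\{x_j : j \in \ZZ_p\}$, let $D[V]$ be the circulant with connection set $\D=\{d\in\ZZ_{5p}: d\not\equiv 0\pmod 5\}$, and use $\rho=(v_0\,v_1\,\ldots\,v_{5p-1})$ fixing $X$ pointwise. A base $\vec{C}_{3p}$-factor $R$ whose $\rho$-orbit is a $\vec{C}_{3p}$-factorization must contain exactly one arc per $\rho$-orbit of arcs: one arc of each of the $4p$ differences in $\D$, one arc $v_\ast \to x_j$ for every $j\in\ZZ_p$, and one arc $x_j\to v_\ast$ for every $j$. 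Since $|V|+|X|=6p$, this factor $R$ will consist of two vertex-disjoint directed $3p$-cycles.

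I would retain the directed 4-paths $P_i=v_{-3i}v_{2i+1}v_{-(3i+1)}v_{2i+2}v_{-(3i+2)}$ for $1\le i\le (p-3)/2$ together with their $\rho$-reflections $Q_i$ used in Lemma~\ref{lem:6,6}(a); these cover every difference outside
\[\D'=\left\{\pm 1,\pm 2,\pm 3,\pm 4,\pm\tfrac{5p-3}{2},\pm\tfrac{5p-1}{2}\right\}.\]
I would then design supplementary short paths (plus possibly an auxiliary short cycle) to exhaust $\D'$ on the remaining vertices of $V$, and finally partition the resulting list of $p$ pieces together with the $p$ vertices of $X$ into two glued closures of length exactly $3p$. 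A convenient target is to make one cycle, say $C_2$, live entirely inside $V$ on $3p$ chosen vertices (absorbing $3p$ of the differences), while the other, $C_1$, absorbs all of $X$ by alternating short $V$-paths with the $p$ vertices of $X$ on the remaining $2p$ vertices of $V$.

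For part (b) the primes split naturally. For $p\in\{3,5,7,11,13\}$, Theorem~\ref{thm:Ben} already yields a $\vec{C}_3$-factorization of $K_{6[p]}^\ast$, since $p$ is a prime less than $17$. For $p\in\{17,19,23,29,31,37\}$, I would exhibit an explicit starter $\vec{C}_3$-factor $R$ such that $\{\rho^i(R):i\in\ZZ_{5p}\}$ is a $\vec{C}_3$-factorization. The orbit count forces $R$ to be the disjoint union of $p$ triangles inside $D[V]$, whose difference triples partition $3p$ of the elements of $\D$ and sum to $0\pmod{5p}$, together with $p$ triangles of the form $v_a\,x_j\,v_b$ (one per $j$), each contributing a single back-arc difference from the remaining $p$ elements of $\D$; the vertex sets of the triangles partition $V$ into $p$ triples and $p$ pairs. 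Producing such a starter is then a bounded combinatorial search for each of the six primes.

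The main obstacle is part (a): converting the single Hamilton-cycle closure of Lemma~\ref{lem:6,6}(a) into two vertex-disjoint $3p$-cycles of the exact correct length, while still covering all of $\D$ and the $X$-arcs, is delicate and will almost certainly force a case split on $p$ modulo a small integer in the spirit of Lemma~\ref{lem:8}. Part (b), though requiring explicit starters for six primes, is a finite check; the bound $p\le 37$ presumably reflects the largest prime for which such a starter could be exhibited in practice.
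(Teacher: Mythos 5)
Your setup and your reduction for part (b) match the paper exactly: the starter factor is forced to consist of $p$ directed triangles inside $D[V]$ plus $p$ triangles each using one vertex of $X$ and one leftover arc of $D[V]$, the four small primes are absorbed by Theorem~\ref{thm:Ben}, and the six remaining primes are handled by explicit starters found by computer search (the paper lists them in an appendix). So part (b) is fine as a plan. Part (a), however, contains a genuine gap: you correctly identify that the single Hamilton closure of Lemma~\ref{lem:6,6}(a) must be replaced by two $3p$-cycles, but you stop at ``design supplementary short paths \dots and partition the pieces,'' and you explicitly flag this as an unresolved obstacle. The missing idea is that \emph{no new paths and no case split are needed}: the paper reuses the exact same pieces $P_0$ (length $9$), $R_1$ (length $1$), $R_2$ (length $2$), and the $p-3$ paths $P_1,\dots,P_{\frac{p-3}{2}},Q_1,\dots,Q_{\frac{p-3}{2}}$ of length $4$ from Lemma~\ref{lem:6,6}(a), and simply regroups them. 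Putting $P_0,R_2,P_1,\dots,P_{\frac{p-5}{2}}$ (that is, $\frac{p-1}{2}$ paths of total length $9+2+4\cdot\frac{p-5}{2}=2p+1$) into one cycle with $\frac{p-1}{2}$ vertices of $X$ gives length $2p+1+(p-1)=3p$, and the remaining $\frac{p+1}{2}$ paths of total length $2p-1$ with the remaining $\frac{p+1}{2}$ vertices of $X$ give the second $3p$-cycle. The general identity behind this is that $k$ paths of total length $4p-L$ joined by $k$ vertices of $X$ and $p-k$ paths of total length $L$ joined by $p-k$ vertices of $X$ both close to length $3p$ precisely when $L=3p-2(p-k)$, so one only needs a subset of the existing lengths summing to the right value, which the multiset $\{9,1,2,4,\dots,4\}$ supplies for every $p\ge 5$.

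Two further points. First, your proposed target for the split (one $3p$-cycle living entirely inside $V$, the other absorbing all $p$ vertices of $X$) is structurally harder than what is needed: a cycle closed inside $V$ must have its differences sum to $0 \pmod{5p}$ and its endpoints joined by an admissible difference, which would force you to redesign the paths rather than reuse them; the paper avoids this by distributing $X$ across both cycles. Second, your part (a) plan is vacuous for $p=3$ (there are no paths $P_i,Q_i$ when $\frac{p-3}{2}=0$), and the paper handles $p=3$ by a separate small ad hoc construction of a $\vec{C}_9$-factorization of $K_{6[3]}^\ast$; you would need to supply something analogous.
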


\begin{proof}
As in the proof of Lemma~\ref{lem:6,6}, let the vertex set of $D=K_{6[p]}^\ast$ be $V \cup X$, where  $V=\{ v_i: i \in \ZZ_{5p}\}$ and $X=\{ x_i: i \in \ZZ_p\}$, so $D[V]$ is a circulant digraph with connection set  $\D=\{ d \in \ZZ_{5p}: d \not\equiv 0 \pmod{5}\}$. Let $\rho$ be the permutation $\rho=(v_0 \, v_1 \, \ldots \, v_{5p-1} )$.

\begin{enumerate}[(a)]
\item First assume $p \ge 5$. This is very similar to the proof of Lemma~\ref{lem:6,6}(a).

For each $i \in \{ 1,2,\ldots,\frac{p-3}{2} \}$, define the directed 4-paths $P_i$ and $Q_i$, as well as directed paths $P_0, R_1$, and $R_2$ exactly as in the proof of Lemma~\ref{lem:6,6}(a). (See Figure~\ref{fig:Fig8}.)
Recall that the $p$ directed paths $P_1,\ldots,P_{\frac{p-3}{2}},Q_1,\ldots,Q_{\frac{p-3}{2}},P_0,R_1,R_2$ are pairwise disjoint, use all vertices in $V$,  and  jointly use exactly one arc of each difference in $\D$.

We join the $\frac{p-1}{2}$ directed paths $P_0, R_2, P_1,\ldots,P_{\frac{p-5}{2}}$ into a directed cycle $C_1$ using $\frac{p-1}{2}$ vertices of $X$. The length of $C_1$ is $9 + 2 + 4 \cdot \frac{p-5}{2} + 2 \cdot \frac{p-1}{2}=3p$, as required. We then join the remaining $\frac{p+1}{2}$ directed paths --- namely, $R_1,P_{\frac{p-3}{2}},Q_1,\ldots,Q_{\frac{p-3}{2}}$ --- into a directed cycle $C_2$ using the remaining $\frac{p+1}{2}$ vertices of $X$. The length of $C_2$ is $1 + 4 \cdot \frac{p-1}{2} + 2 \cdot \frac{p+1}{2}=3p$, again as required.

Let $F=C_1 \cup C_2$. Then $\{ \rho^i(F): i \in \ZZ_{5p} \}$ is a $\vec{C}_{3p}$-factorization of $D$.

Now let $p=3$, so $V=\{ v_i: i \in \ZZ_{15} \}$ and $\D=\{ \pm 1, \pm 2, \pm 3, \pm 4, \pm 6, \pm 7 \}$. Define the directed paths
\begin{eqnarray*}
P_1 &=& v_7 v_0 v_1 v_{-1} v_2 v_{-2} v_{-3} v_3, \\
P_2 &=& v_{-5} v_4 v_{-4}, \quad \mbox{ and }\\
P_3  &=& v_6 v_{-7} v_5 v_{-6},
\end{eqnarray*}
and observe that they are pairwise disjoint, and jointly use exactly one arc of each difference in $\D$. Use vertex $x_0$ to extend $P_1$ to a directed 9-cycle $C_1$, and use vertices $x_1$ and $x_2$ to join $P_1$ and $P_2$ into a directed 9-cycle $C_2$. Then $\{ \rho^i(F): i \in \ZZ_{15} \}$, for $F=C_1 \cup C_2$, is a $\vec{C}_{9}$-factorization of $D$.

\item It suffices to show that $K_{5[p]}^\ast$ admits a spanning subdigraph $D'$ with the following properties:
    \begin{enumerate}[(i)]
    \item $D'$ is a disjoint union of $p$ copies of $\vec{C}_3$ and $p$ copies of $\vec{P}_1$, the directed 1-path; and
    \item $D'$ contains exactly one arc of each difference in $\D$.
    \end{enumerate}
    Let $F$ be obtained from $D'$ by completing each copy of $\vec{P}_1$ to a $\vec{C}_3$ using a distinct vertex in $X$. It then follows that $\{ \rho^i(F): i \in \ZZ_{5p} \}$ is a $\vec{C}_{3}$-factorization of $D$.

    Computationally, we have verified the existence of a suitable subdigraph $D'$ of $K_{5[p]}^\ast$ for all primes $p$, $3 \le p \le 37$ (see Appendix~\ref{app}). Since the existence of a $\vec{C}_{3}$-factorization of $K_{6[p]}^\ast$ for each odd prime $p<17$ is guaranteed by Theorem~\ref{thm:Ben}, only the cases with $17 \le p \le 37$ are presented.
\end{enumerate}
\end{proof}

\begin{cor}\label{cor:g=3}
Assume $m \ge 3$ is odd, $t|6m$, and $\gcd(6,t)=3$.
Then $K_{6[m]}^\ast$ admits a $\vec{C}_t$-factorization, except possibly when $t=3$ and $m$ is not divisible by any prime $p \le 37$.
\end{cor}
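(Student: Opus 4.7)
The plan is to mimic the proof of Corollary~\ref{cor:n=4} (and Corollary~\ref{cor:n=6}), leveraging Lemma~\ref{lem:3,6} in place of Lemma~\ref{lem:4,4}. First I would unpack the arithmetic: the hypotheses $t \mid 6m$ and $\gcd(6,t)=3$ force $3 \mid t$ and $t$ odd, so $t = 3s$ with $s$ odd; combining $3s \mid 6m$ with $\gcd(s,2)=1$ gives $s \mid m$ (so $s$ itself is odd, being a divisor of the odd number $m$). This splits the proof naturally into the cases $s=1$ and $s \ge 3$.

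In the case $s \ge 3$ (so $t = 3s \ge 9$), I would pick any prime $p \mid s$ (necessarily odd). Lemma~\ref{lem:3,6}(a) provides a $\vec{C}_{3p}$-factorization of $K_{6[p]}^\ast$. Then, since $\frac{s}{p}$ is odd, Corollary~\ref{cor:blow-digraph}(b) applied to $K_{6[p]}^\ast \wr \bar{K}_{s/p} \cong K_{6[s]}^\ast$ promotes this to a $\vec{C}_{3s}$-factorization of $K_{6[s]}^\ast$. Finally, $\frac{m}{s}$ is odd, so Corollary~\ref{cor:blow-digraph}(a) applied to $K_{6[s]}^\ast \wr \bar{K}_{m/s} \cong K_{6[m]}^\ast$ yields the desired $\vec{C}_t$-factorization. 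Note this case has no exceptions; the exceptional statement in the corollary is relevant only when $s=1$, i.e.\ $t=3$.

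In the case $s=1$ (so $t=3$), I would assume that $m$ is divisible by some prime $p \le 37$ (otherwise we are in the exceptional case that the corollary explicitly allows). Lemma~\ref{lem:3,6}(b) then furnishes a $\vec{C}_3$-factorization of $K_{6[p]}^\ast$. Since $\frac{m}{p}$ is odd, Corollary~\ref{cor:blow-digraph}(a) applied to $K_{6[p]}^\ast \wr \bar{K}_{m/p} \cong K_{6[m]}^\ast$ gives a $\vec{C}_3$-factorization of $K_{6[m]}^\ast$, completing the proof.

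There is essentially no obstacle here, since all the hard combinatorial work has already been done in Lemma~\ref{lem:3,6} and the blow-up machinery of Corollary~\ref{cor:blow-digraph}; the only delicate point is verifying that $p$ is chosen so that $\frac{m}{p}$ (and $\frac{s}{p}$) is odd, which is automatic because $m$ (and $s$) is odd.
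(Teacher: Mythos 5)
Your proposal is correct and follows essentially the same route as the paper's own proof: the same reduction $t=3s$ with $s\mid m$ odd, the same case split on $s=1$ versus $s\ge 3$, and the same applications of Lemma~\ref{lem:3,6}(a)/(b) followed by Corollary~\ref{cor:blow-digraph}(b) and (a). No substantive differences.
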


\begin{proof}
The assumptions imply that $t=3s$ for some odd $s \ge 1$, and $s|m$.

If $s \ge 3$, let $p$ be any prime factor of $s$. Then by Lemma~\ref{lem:3,6}(a), the digraph $K_{6[p]}^\ast$ admits a $\vec{C}_{3p}$-factorization. It now follows from Corollary~\ref{cor:blow-digraph}(b) that $K_{6[s]}^\ast \cong K_{6[p]}^\ast \wr \bar{K_{\frac{s}{p}}}$ admits a $\vec{C}_{3s}$-factorization. Finally, by  Corollary~\ref{cor:blow-digraph}(a), $K_{6[m]}^\ast$ admits a $\vec{C}_{3s}$-factorization.

If $s=1$, assume $m$ has a prime factor $p \le 37$. Then by Lemma~\ref{lem:3,6}(b), the digraph $K_{6[p]}^\ast$ admits a $\vec{C}_3$-factorization, and it follows from Corollary~\ref{cor:blow-digraph}(a) that $K_{6[m]}^\ast \cong K_{6[p]}^\ast \wr \bar{K_{\frac{m}{p}}}$ admits a $\vec{C}_3$-factorization.
\end{proof}


\section{Proof of Theorem~\ref{thm:main} and conclusion}

For convenience, we re-state the main result of this paper before summarizing its proof.

\begin{customthm}{\ref{thm:main}}
Let $m$, $n$, and $t$ be integers greater than 1, and let $g=\gcd(n,t)$. Assume one of the following conditions holds.
\begin{enumerate}[(i)]
\item $m(n-1)$ even; or
\item $g \not\in \{ 1,3 \}$; or
\item $g=1$, and $n \equiv 0 \pmod{4}$ or $n \equiv 0 \pmod{6}$; or
\item $g=3$, and if $n=6$, then $m$ is divisible by a prime $p \le 37$.
\end{enumerate}
Then the digraph $K_{n[m]}^*$ admits a $\vec{C}_t$-factorization if and only if $t|mn$ and $t$ is even in case $n=2$.
\end{customthm}

\begin{proof}
If $K_{n[m]}^*$ admits a $\vec{C}_t$-factorization, then clearly $t|mn$, and $t$ is even when $n=2$.

Now assume these necessary conditions hold.

If $m(n-1)$ is even, then a $\vec{C}_t$-factorization of $K_{n[m]}^*$ exists by Corollary~\ref{cor:Liu}.

Hence assume $m(n-1)$ is odd.  If $g \not\in \{ 1,3 \}$, then the result follows by Proposition~\ref{pro:easy}, and Corollaries~\ref{cor:n=4} and \ref{cor:n=6}. If $g=1$, the results for $n \equiv 0 \pmod{4}$ and $n \equiv 0 \pmod{6}$ follow by Corollaries~\ref{cor:n=0mod4} and \ref{cor:n=0mod6}, respectively.

Finally, the claim for $g=3$ follows from Proposition~\ref{pro:easy}(2) if $n \ne 6$, and from Corollary~\ref{cor:g=3} if $n=6$ and  $m$ is divisible by a prime $p \le 37$.
\end{proof}

We have thus solved several extensive cases of Problem~\ref{prob}. Since there are no exceptions in the cases with small parameters covered by Theorem~\ref{thm:main}, we propose the following conjecture.

\begin{conj}\label{conj}
Let $m$, $n$, and $t$ be positive integers. Then $K_{n[m]}^*$ admits a $\vec{C}_t$-factorization if and only if $t|mn$, $t$ is even in case $n=2$, and $(m,n,t) \not\in \{ (1,6,3),(1,4,4),(1,6,6) \}$.
\end{conj}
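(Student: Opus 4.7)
My plan is to prove the theorem by a straightforward case analysis that collates the lemmas, propositions, and corollaries established earlier in the paper. The necessity of the conditions $t\mid mn$ and ``$t$ even when $n=2$'' is immediate from counting arcs and observing that $K_2^\ast$ has no odd directed cycles. For sufficiency, I would partition the parameter space by the parity of $m(n-1)$ and the value of $g=\gcd(n,t)$, then invoke the appropriate prior result in each piece.

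First I would dispose of case (i), where $m(n-1)$ is even: Corollary~\ref{cor:Liu} delivers a $\vec{C}_t$-factorization of $K_{n[m]}^\ast$ directly under the stated divisibility hypotheses. So the remainder of the argument assumes $m$ and $n-1$ are both odd, i.e.\ $m$ odd and $n$ even. Next, for case (ii), I would split on the parity of $g$: if $g$ is even with $(g,n)\notin\{(4,4),(6,6)\}$ or $g$ is odd with $g\ge 3$ and $(g,n)\ne(3,6)$, then Proposition~\ref{pro:easy} finishes the job; the leftover subcases $(g,n)=(4,4)$ and $(g,n)=(6,6)$ are exactly what Corollary~\ref{cor:n=4} and Corollary~\ref{cor:n=6} were built to cover. (The remaining subcase of Proposition~\ref{pro:easy} that lies outside case (ii), namely $(g,n)=(3,6)$, falls under case (iv) and is addressed separately.)

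For case (iii), with $g=1$ and $m(n-1)$ odd, the two sub-subcases $n\equiv 0\pmod{4}$ and $n\equiv 0\pmod{6}$ are handled verbatim by Corollaries~\ref{cor:n=0mod4} and \ref{cor:n=0mod6}, whose proofs in turn reduce via Corollary~\ref{cor:reduction} to the base cases $n\in\{4,8\}$ (Lemmas~\ref{lem:4}, \ref{lem:8}) and $n=6$ (Lemma~\ref{lem:6}). Finally, for case (iv), where $g=3$: Proposition~\ref{pro:easy}(2) disposes of every $n$ even with $g=3$ except $n=6$, and the residual case $n=6$ is exactly the content of Corollary~\ref{cor:g=3}, which requires the computer-assisted hypothesis that $m$ has a prime factor $p\le 37$.

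Since the entire proof is a dispatcher that routes each $(m,n,t)$ to the correct earlier result, there is no genuine obstacle at this level; the real work has already been done in Sections 3--9. The only place one must be careful is in the bookkeeping of case (ii), making sure that the two exceptional pairs $(g,n)=(4,4),(6,6)$ missing from Proposition~\ref{pro:easy}(1) are precisely filled by Corollaries~\ref{cor:n=4} and \ref{cor:n=6}, and that the exception $(g,n)=(3,6)$ from Proposition~\ref{pro:easy}(2) is correctly deferred to case (iv). With that observation in place, the proof reduces to a short paragraph invoking each prior result in turn.
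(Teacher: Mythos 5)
You have written a proof of Theorem~\ref{thm:main}, not of Conjecture~\ref{conj}. The conjecture asserts sufficiency for \emph{all} positive integers $m$, $n$, $t$ (outside the three listed exceptions), whereas your case analysis only routes the triples satisfying one of the four hypotheses (i)--(iv) of Theorem~\ref{thm:main}. Two families of parameters fall through your dispatcher, and they are exactly the cases the paper leaves open. First, in your case (iii) you assume $n\equiv 0\pmod 4$ or $n\equiv 0\pmod 6$; but with $m$ odd, $n$ even and $g=\gcd(n,t)=1$, the conjecture also demands, e.g., $(m,n,t)=(3,10,3)$. Corollary~\ref{cor:reduction}(2)--(3) reduces such triples to the existence of a $\vec{C}_t$-factorization of $K_{2p[t]}^\ast$ for primes $p\ge 5$, and no lemma in the paper supplies this: Lemmas~\ref{lem:4}, \ref{lem:8} and \ref{lem:6} only handle $n=4$, $8$, $6$. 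Second, in your case (iv) you invoke Corollary~\ref{cor:g=3}, which carries the hypothesis that $t\ge 9$ or that $m$ has a prime factor $p\le 37$; the triple $(m,n,t)=(41,6,3)$ satisfies the conjectured conditions but is not covered by any result in the paper (Lemma~\ref{lem:3,6}(b) is verified computationally only for $p\le 37$).

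These are not bookkeeping oversights but genuine mathematical gaps: the paper itself states, immediately after the conjecture, that completing its proof requires new constructions for $(m,n,t)=(t,2p,t)$ with $p\ge 5$ prime and $t$ an odd prime, and for $(m,6,3)$ with $m\ge 41$ prime. No ``short paragraph invoking each prior result in turn'' can close these cases, because the required base factorizations ($\vec{C}_t$-factorizations of $K_{2p[t]}^\ast$ for $p\ge 5$, and $\vec{C}_3$-factorizations of $K_{6[p]}^\ast$ for primes $p\ge 41$) have not been constructed. Your argument is a correct assembly of the proof of Theorem~\ref{thm:main}, but as a proof of Conjecture~\ref{conj} it is incomplete in precisely the places where the problem remains open.
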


By Corollary~\ref{cor:blow-digraph}, and Lemmas~\ref{lem:filling} and \ref{lem:3,6}, to complete the proof of Conjecture~\ref{conj}, it suffices to prove existence of a $\vec{C}_t$-factorization of $K_{n[m]}^*$ in the following cases:
\begin{enumerate}[(i)]
\item $(m,n,t)=(t,2p,t)$ for a prime $p \ge 5$  and odd prime $t$; and
\item $(m,n,t)=(m,6,3)$ for a  prime $m \ge 41$.
\end{enumerate}

\subsection*{Acknowledgements}

The second author gratefully acknowledges support by the Natural Sciences and Engineering Research Council of Canada (NSERC).

\small

\normalsize
\newpage

\appendix

\section{Starter digraphs for a $\vec{C}_{3}$-factorization of $K_{6[p]}^\ast$}\label{app}

For each prime $p$, $17 \le p \le 37$, we give a set $\C$ containing $p$ copies of $\vec{C}_3$ and a set $\P$ containing $p$ copies of $\vec{P}_1$ that together form a starter digraph $D'$ for a $\vec{C}_{3}$-factorization of $K_{6[p]}^\ast$; see the proof of Lemma~\ref{lem:3,6}(b).

\begin{itemize}
\item $p=17$
\begin{eqnarray*}
\C &=& \{ v_{10} v_{21} v_{33} v_{10}, v_{1} v_{74} v_{63} v_{1},
v_{55} v_{64} v_{83} v_{55}, v_{40} v_{68} v_{49} v_{40},
v_{29} v_{37} v_{58} v_{29}, v_{3} v_{67} v_{59} v_{3}, \\
&&
v_{5} v_{12} v_{36} v_{5}, v_{19} v_{50} v_{26} v_{19},
v_{17} v_{75} v_{23} v_{17}, v_{14} v_{20} v_{47} v_{14},
v_{35} v_{39} v_{71} v_{35}, v_{9} v_{45} v_{13} v_{9},\\
&&
v_{22} v_{70} v_{73} v_{22}, v_{0} v_{82} v_{34} v_{0},
v_{31} v_{77} v_{78} v_{31}, v_{42} v_{81} v_{80} v_{42},
v_{2} v_{44} v_{46} v_{2} \} \\
\P &=& \{ v_{8} v_{6}, v_{65} v_{24}, v_{60} v_{18},
v_{7} v_{79}, v_{56} v_{69},
v_{62} v_{76}, v_{66} v_{52},
v_{41} v_{57}, v_{15} v_{84},
v_{11} v_{28}, v_{4} v_{72}, \\
&&
v_{30} v_{48}, v_{61} v_{43},
v_{16} v_{38}, v_{54} v_{32},
v_{25} v_{51}, v_{53} v_{27} \}
\end{eqnarray*}

\item $p=19$
\begin{eqnarray*}
\C &=& \{ v_{20} v_{32} v_{51} v_{20}, v_{2} v_{78} v_{66} v_{2},
v_{9} v_{26} v_{93} v_{9}, v_{46} v_{74} v_{57} v_{46},
v_{1} v_{10} v_{34} v_{1}, v_{4} v_{37} v_{28} v_{4}, \\
&&
v_{36} v_{70} v_{62} v_{36}, v_{7} v_{33} v_{94} v_{7},
v_{18} v_{77} v_{84} v_{18}, v_{17} v_{83} v_{76} v_{17},
v_{53} v_{59} v_{91} v_{53}, v_{49} v_{87} v_{55} v_{49}, \\
&&
v_{15} v_{69} v_{73} v_{15}, v_{39} v_{80} v_{43} v_{39},
v_{25} v_{67} v_{64} v_{25}, v_{12} v_{65} v_{68} v_{12},
v_{42} v_{86} v_{85} v_{42}, v_{40} v_{41} v_{92} v_{40}, \\
&&
v_{11} v_{58} v_{60} v_{11} \} \\
\P &=& \{ v_{90} v_{88}, v_{14} v_{63}, v_{31} v_{79},
v_{48} v_{61}, v_{21} v_{8},
v_{30} v_{44}, v_{19} v_{5},
v_{3} v_{82}, v_{29} v_{45},
v_{54} v_{72}, v_{89} v_{71}, \\
&&
v_{6} v_{27}, v_{56} v_{35},
v_{0} v_{22}, v_{38} v_{16},
v_{24} v_{47}, v_{75} v_{52},
v_{13} v_{81}, v_{23} v_{50} \}
\end{eqnarray*}

\item $p=23$
\begin{eqnarray*}
\C &=& \{ v_{11} v_{25} v_{44} v_{11}, v_{17} v_{113} v_{99} v_{17},
v_{49} v_{86} v_{73} v_{49}, v_{45} v_{58} v_{82} v_{45},
v_{28} v_{102} v_{114} v_{28}, v_{71} v_{112} v_{83} v_{71}, \\
&&
v_{15} v_{92} v_{103} v_{15}, v_{5} v_{109} v_{32} v_{5},
v_{48} v_{57} v_{91} v_{48}, v_{29} v_{110} v_{101} v_{29},
v_{31} v_{39} v_{75} v_{31}, v_{26} v_{70} v_{34} v_{26}, \\
&&
v_{38} v_{84} v_{77} v_{38}, v_{60} v_{67} v_{106} v_{60},
v_{4} v_{10} v_{52} v_{4}, v_{63} v_{111} v_{69} v_{63},
v_{47} v_{98} v_{94} v_{47}, v_{14} v_{18} v_{65} v_{14}, \\
&&
v_{53} v_{56} v_{105} v_{53}, v_{30} v_{96} v_{93} v_{30},
v_{54} v_{108} v_{107} v_{54}, v_{27} v_{80} v_{81} v_{27},
v_{21} v_{23} v_{79} v_{21} \} \\
\P &=& \{ v_{78} v_{76}, v_{64} v_{8}, v_{37} v_{95},
v_{55} v_{87}, v_{51} v_{19},
v_{72} v_{88}, v_{16} v_{0},
v_{7} v_{24}, v_{85} v_{68},
v_{59} v_{41}, v_{43} v_{61}, \\
&&
v_{3} v_{97}, v_{100} v_{6},
v_{13} v_{35}, v_{42} v_{20},
v_{66} v_{89}, v_{12} v_{104},
v_{36} v_{62}, v_{1} v_{90},
v_{46} v_{74}, v_{50} v_{22},
v_{9} v_{40}, v_{33} v_{2}\}
\end{eqnarray*}

\item $p=29$
\begin{eqnarray*}
\C &=& \{ v_{5} v_{23} v_{46} v_{5}, v_{21} v_{62} v_{39} v_{21},
v_{54} v_{71} v_{100} v_{54}, v_{63} v_{109} v_{80} v_{63},
v_{2} v_{33} v_{131} v_{2}, v_{12} v_{141} v_{43} v_{12}, \\
&&
v_{1} v_{98} v_{112} v_{1}, v_{60} v_{108} v_{74} v_{60},
v_{9} v_{102} v_{115} v_{9}, v_{31} v_{83} v_{44} v_{31},
v_{20} v_{32} v_{76} v_{20}, v_{36} v_{92} v_{48} v_{36}, \\
&&
v_{4} v_{15} v_{57} v_{4}, v_{41} v_{94} v_{52} v_{41},
v_{7} v_{56} v_{143} v_{7}, v_{55} v_{113} v_{64} v_{55},
v_{77} v_{85} v_{136} v_{77}, v_{0} v_{137} v_{51} v_{0}, \\
&&
v_{49} v_{133} v_{140} v_{49}, v_{69} v_{130} v_{123} v_{69},
v_{16} v_{79} v_{73} v_{16}, v_{29} v_{35} v_{117} v_{29},
v_{59} v_{138} v_{142} v_{59}, v_{37} v_{120} v_{116} v_{37}, \\
&&
v_{14} v_{17} v_{81} v_{14}, v_{47} v_{128} v_{125} v_{47},
v_{65} v_{66} v_{134} v_{65}, v_{11} v_{88} v_{87} v_{11},
v_{19} v_{91} v_{93} v_{19} \} \\
\P &=& \{ v_{30} v_{28}, v_{22} v_{96}, v_{78} v_{6},
v_{110} v_{129}, v_{126} v_{107},
v_{3} v_{24}, v_{135} v_{114},
v_{84} v_{106}, v_{97} v_{75},
v_{95} v_{119}, v_{50} v_{26}, \\
&&
v_{8} v_{34}, v_{53} v_{27},
v_{18} v_{45}, v_{67} v_{40},
v_{10} v_{38}, v_{139} v_{111},
v_{90} v_{122}, v_{104} v_{72},
v_{99} v_{132}, v_{103} v_{70},\\
&&
v_{82} v_{118}, v_{61} v_{25},
v_{68} v_{105}, v_{13} v_{121},
v_{86} v_{124}, v_{127} v_{89},
v_{42} v_{144}, v_{58} v_{101}\}
\end{eqnarray*}

\item $p=31$
\begin{eqnarray*}
\C &=& \{ v_{3} v_{22} v_{51} v_{3}, v_{28} v_{154} v_{135} v_{28},
v_{45} v_{63} v_{91} v_{45}, v_{26} v_{72} v_{44} v_{26},
v_{39} v_{56} v_{90} v_{39}, v_{7} v_{58} v_{24} v_{7}, \\
&&
v_{25} v_{41} v_{77} v_{25}, v_{29} v_{148} v_{132} v_{29},
v_{33} v_{47} v_{86} v_{33}, v_{66} v_{119} v_{80} v_{66},
v_{55} v_{68} v_{112} v_{55}, v_{4} v_{146} v_{48} v_{4}, \\
&&
v_{18} v_{79} v_{67} v_{18}, v_{89} v_{138} v_{150} v_{89},
v_{38} v_{49} v_{96} v_{38}, v_{6} v_{114} v_{17} v_{6},
v_{14} v_{106} v_{115} v_{14}, v_{40} v_{103} v_{94} v_{40}, \\
&&
v_{57} v_{65} v_{121} v_{57}, v_{73} v_{137} v_{81} v_{73},
v_{75} v_{82} v_{141} v_{75}, v_{37} v_{133} v_{126} v_{37},
v_{54} v_{60} v_{122} v_{54}, v_{0} v_{149} v_{87} v_{0}, \\
&&
v_{42} v_{46} v_{113} v_{42}, v_{21} v_{92} v_{88} v_{21},
v_{2} v_{5} v_{74} v_{2}, v_{34} v_{120} v_{117} v_{34},
v_{19} v_{100} v_{101} v_{19}, v_{71} v_{153} v_{152} v_{71}, \\
&&
v_{62} v_{64} v_{140} v_{62} \} \\
\P &=& \{ v_{104} v_{102}, v_{85} v_{9}, v_{129} v_{52},
v_{107} v_{128}, v_{13} v_{147},
v_{83} v_{105}, v_{23} v_{1},
v_{95} v_{118}, v_{43} v_{20}, \\
&&
v_{127} v_{151}, v_{134} v_{110},
v_{116} v_{142}, v_{123} v_{97},
v_{84} v_{111}, v_{59} v_{32},
v_{12} v_{136}, v_{99} v_{130},
v_{8} v_{131}, v_{139} v_{16},\\
&&
v_{76} v_{109}, v_{69} v_{36},
v_{27} v_{145}, v_{61} v_{98},
v_{15} v_{53}, v_{108} v_{70},
v_{10} v_{124}, v_{125} v_{11},
v_{31} v_{144}, v_{143} v_{30},\\
&&
v_{50} v_{93}, v_{78} v_{35}\}
\end{eqnarray*}

\item $p=37$
\begin{eqnarray*}
\C &=& \{ v_{130} v_{153} v_{182} v_{130}, v_{125} v_{177} v_{148} v_{125},
v_{78} v_{100} v_{134} v_{78}, v_{29} v_{85} v_{51} v_{29},\\
&&
v_{33} v_{160} v_{181} v_{33}, v_{63} v_{121} v_{84} v_{63},
v_{88} v_{107} v_{151} v_{88}, v_{20} v_{161} v_{142} v_{20},
v_{28} v_{71} v_{89} v_{28}, v_{0} v_{167} v_{43} v_{0},\\
&&
v_{65} v_{82} v_{131} v_{65}, v_{13} v_{149} v_{132} v_{13},
v_{50} v_{168} v_{184} v_{50}, v_{23} v_{90} v_{39} v_{23},
v_{60} v_{114} v_{128} v_{60}, v_{54} v_{122} v_{68} v_{54}, \\
&&
v_{91} v_{104} v_{163} v_{91}, v_{111} v_{183} v_{124} v_{111},
v_{37} v_{146} v_{158} v_{37}, v_{36} v_{157} v_{48} v_{36},\\
&&
v_{52} v_{164} v_{175} v_{52}, v_{86} v_{159} v_{97} v_{86},
v_{49} v_{58} v_{127} v_{49}, v_{66} v_{144} v_{135} v_{66},
v_{14} v_{22} v_{93} v_{14}, v_{44} v_{123} v_{115} v_{44},\\
&&
v_{35} v_{42} v_{116} v_{35}, v_{95} v_{176} v_{169} v_{95},
v_{15} v_{92} v_{98} v_{15}, v_{96} v_{179} v_{173} v_{96},
v_{6} v_{10} v_{109} v_{6}, v_{57} v_{143} v_{61} v_{57},\\
&&
v_{19} v_{103} v_{106} v_{19}, v_{18} v_{105} v_{21} v_{18},
v_{83} v_{171} v_{172} v_{83}, v_{59} v_{156} v_{155} v_{59},
v_{53} v_{55} v_{147} v_{53} \} \\
\P &=& \{ v_{140} v_{138}, v_{26} v_{120}, v_{154} v_{62},
v_{1} v_{162}, v_{77} v_{101},
v_{152} v_{178}, v_{99} v_{73},
v_{3} v_{30}, v_{31} v_{4},
v_{117} v_{145}, v_{108} v_{80}, \\
&&
v_{41} v_{72}, v_{133} v_{102},
v_{17} v_{170}, v_{24} v_{56},
v_{12} v_{45}, v_{38} v_{5},
v_{16} v_{165}, v_{74} v_{110},
v_{32} v_{70}, v_{113} v_{75},\\
&&
v_{79} v_{118}, v_{34} v_{180},
v_{46} v_{87}, v_{81} v_{40},
v_{25} v_{67}, v_{69} v_{27},
v_{2} v_{141}, v_{150} v_{11},
v_{47} v_{94}, v_{166} v_{119},\\
&&
v_{64} v_{112}, v_{174} v_{126},
v_{7} v_{139}, v_{76} v_{129},
v_{9} v_{137}, v_{136} v_{8}\}
\end{eqnarray*}
\end{itemize}

\end{document}